\DeclareMathOperator{\Coker}{\textup{Coker}}
\DeclareMathOperator{\Der}{\textup{Der}}
\DeclareMathOperator{\End}{\textup{End}}
\DeclareMathOperator{\Hl}{\textup{H}}
\DeclareMathOperator{\Imm}{\textup{Im}}
\DeclareMathOperator{\lf}{\textup{LF}}
\DeclareMathOperator*{\colim}{\textup{colim}}
\theoremstyle{plain}
\newtheorem{thm}{Theorem}[section]\setcounter{thm}{0}
\newtheorem*{thm*}{Theorem}
\newtheorem{lem}[thm]{Lemma}
\newtheorem{cor}[thm]{Corollary}
\newtheorem{prop}[thm]{Proposition}
\theoremstyle{remark}
\newtheorem{rmk}[thm]{Remark}
\newtheorem*{rmk*}{Remark}
\theoremstyle{definition}
\newtheorem{defn}{Definition}[section] \setcounter{defn}{0}
\newtheorem*{const*}{Construction}
\newtheorem{conv}[defn]{Setting}
\numberwithin{thm}{section}
\numberwithin{equation}{section}
\DeclareMathOperator{\Diff}{Diff}
\DeclareMathOperator{\Hom}{Hom}
\DeclareMathOperator{\im}{Im}
\newcommand{\Spec}{{\rm Spec \,}}
\newcommand{\Crys}{{\rm Crys}}
\newcommand{\crys}{{\rm crys}}
\newcommand{\QNCf}{{\rm QNCf}}
\newcommand{\Ker}{{\rm Ker}}
\newcommand{\Zar}{{\rm Zar}}
\newcommand{\Conn}{{\rm Conn}}
\newcommand{\Icrys}{I_{\textrm{crys}}}
\newcommand{\sA}{{\mathcal A}}
\newcommand{\sC}{{\mathcal C}}
\newcommand{\sD}{{\mathcal D}}
\newcommand{\der}{{ {\sD}er}}
\newcommand{\sE}{{\mathcal E}}
\newcommand{\sF}{{\mathcal F}}
\newcommand{\sH}{{\mathcal H}}
\newcommand{\sHom}{{\sH om}}
\newcommand{\sO}{{\mathcal O}}
\newcommand{\rr}{{\mathbf{R}}}
\newcommand{\dl}{{\mathbf{L}}}
\newcommand{\Mod}{\text{\sf Mod}}
\newcommand{\Vect}{\text{\sf Vect}}
\global\long\def\E{\mathcal{E}}
\global\long\def\N{\mathbb{N}}
\global\long\def\Z{\mathbb{Z}}
\global\long\def\arr{\longrightarrow}
\global\long\def\arrdi#1{\xlongrightarrow{#1}}
\global\long\def\comma{,\ }
\global\long\def\id{\textup{id}}
\global\long\def\odi#1{\mathcal{O}_{#1}}
\global\long\def\stG{\mathcal{G}}
\global\long\def\then{\ \Longrightarrow\ }
\newcommand{\lS}{{\mathbf S}}
\newcommand{\lT}{{\mathbf T}}
\newcommand{\lX}{{\mathbf X}}
\newcommand{\lD}{{\mathbf D}}
\newcommand{\lW}{{\mathbf W}}
\newcommand{\lV}{{\mathbf V}}
\newcommand{\lU}{{\mathbf U}}
\newcommand{\lZ}{{\mathbf Z}}
\newcommand{\lP}{{\mathbf P}}
\renewcommand{\lf}{{\mathbf f}}
\renewcommand{\to}{\arr}
\theoremstyle{plain}
\newtheorem{thmI}{Theorem}
\newtheorem{thmII}{Theorem}
\newtheorem{thmIII}{Theorem}
\newtheorem{thmIV}{Theorem}
\begin{document}
\title[A crystalline incarnation of Berthelot's conjecture]{A crystalline incarnation of Berthelot's conjecture and Künneth formula for isocrystals}
\author{Valentina Di Proietto, Fabio Tonini, Lei Zhang}

\address{ 
Valentina Di Proietto\\
    University of Exeter\\
    College of Engineering, Mathematics and Physical Sciences\\
Streatham Campus\\
    Exeter, EX4 4RN\\
United Kingdom }
\email{v.di-proietto@exeter.ac.uk}

\address{
    Università degli Studi di Firenze\\
    Dipartimento di Matematica e Informatica Ulisse Dini\\
    Viale Morgagni 67/a\\ Firenze, 50134 Italy }
\email{fabio.tonini@unifi.it}

 \address{
    The Chinese University of Hong Kong\\
    Department of Mathematics\\    
    Shatin, New Territories\\ Hong Kong }
\email{lzhang@math.cuhk.edu.hk} 

\date{\today}

\thanks{This work was supported by the European Research Council (ERC) Advanced Grant 0419744101 and the Einstein Foundation. Part of the revision of this work has been done while the first author was guest of the IMPAN: her stay was supported by the grant 346300 for IMPAN from the Simons Foundation and the matching 2015--2019 Polish MNiSW fund. The second author was
supported by GNSAGA of INdAM}

\makeatletter 
\providecommand\@dotsep{5} 
\makeatother 

\begin{abstract}
 Berthelot's conjecture predicts that under a proper and smooth
 morphism of schemes in characteristic $p$, the higher direct
 images of an overconvergent $F$-isocrystal are overconvergent
 $F$-isocrystals. In this paper we prove that this is true for
 crystals up to isogeny. As an application we prove the
 K\"unneth formula for the crystalline fundamental group scheme.
\end{abstract}
\maketitle
\section*{Introduction}
One of the expectations for a good cohomology theory for schemes is that there exists a pushforward functor $f_*$ associated to a proper and smooth morphism $f\colon X\rightarrow S$ such that $\rr^q f_*$ (for $q\geq 0$) sends a coefficient for the cohomology on $X$ to a coefficient for the cohomology on $S$. This expectation is reality in various contexts.

 Let $k$ be a field of characteristic $0$, $f\colon X\rightarrow S$ be a proper and smooth morphism between two $k$-varieties, and let $\mathcal{E}$ be a module with integrable connection on $X$; then the relative de Rham cohomology $\rr^qf_*(\mathcal{E})$ comes endowed with an integrable connection, the Gauss--Manin connection (see for example \cite{Katz70}, \cite{Har75}), so that it is indeed a coefficient for the cohomology on $S$. 

When $k$ is a field of characteristic $p>0$, $f\colon X\rightarrow S$ is a proper and smooth morphism between two $k$-varieties, and $\mathcal{E}$ an $\ell$-adic lisse sheaf ($\ell\neq p$), then $\rr^qf_*(\mathcal{E})$ is an $\ell$-adic lisse sheaf (\cite{SGA41/2}).  

As for the case $\ell=p$, the expectation for an overconvergent $F$-isocrystal $\mathcal{E}$ is known as Berthelot's conjecture (\cite[(4.3)]{Ber86}, \cite{Tsu03}). The conjecture is still open, but several results have been obtained in the last years (\cite{Tsu03}, \cite{Shi07I}, \cite{Shi07II}, \cite{Shi07III}  \cite{Car15}, \cite{Ete12}, \dots). For a survey about this conjecture see \cite{Laz16}. 

As remarked by Ladza in \cite{Laz16}, Berthelot's conjecture can have many incarnations, depending on what kind of coefficients and pushforward one considers. In this paper we deal with a crystalline incarnation of Berthelot's conjecture, working with the category of crystals up to isogeny on the crystalline site.

Let $k$ be a perfect field of characteristic $p>0$, let $W$ be
the ring of Witt vectors of $k$ and let $K$ be the fraction
field of $W$. Set $\lW\coloneq \Spec W$. For a $k$-scheme $X$, Berthelot defined the
crystalline site $(X/\lW)_{\crys}$ and the structure sheaf
$\mathcal{O}_{X/\lW}$. He considered also the category of
crystals of finite presentation, denoted by $\Crys(X/\lW)$,
defined as the category of certain sheaves of $
\mathcal{O}_{X/\lW}$-modules on $(X/\lW)_{\crys}$ which verify a
rigidity condition. The category of isocrystals
$I_{\crys}(X/\lW)$ is the category  $\Crys(X/\lW)$ up to
isogeny, \emph{i.e.} the category whose objects are exactly those in
$\Crys(X/\lW)$ and whose morphisms are obtained inverting the multiplication by $p$. Thus we have a natural functor
$$\Crys(X/\lW)\arr I_{\crys}(X/\lW)$$
which is the identity on objects. To distinguish among objects
in $\Crys(X/\lW)$ and in $I_{\crys}(X/\lW)$ we denote by
$K\otimes E$ the image of $E\in\Crys(X/\lW)$ under the above
functor, and we say that $E$ is a lattice for the isocrystal $\mathcal{E}$ if $K\otimes E\cong \mathcal{E}$.

Given a proper and smooth morphism of $k$-schemes $f\colon X\rightarrow S$ and a crystal $E$ on the crystalline site $(X/\lW)_{\crys}$, there is a morphism of ringed topoi 
\[f_{\rm{crys}*}:((X/\lW)^\sim_{\rm{crys}}, \mathcal{O}_{X/\lW})\longrightarrow ((S/\lW)^\sim_{\rm{crys}}, \mathcal{O}_{S/\lW})\]
and its derived version $\rr f_{\rm{crys}*}$. 
By functoriality the functors $f_{\rm{crys}*}$ and $\rr
f_{\rm{crys}*}$ induce corresponding functors in the isogeny
categories, so if $\mathcal{E}$ is an isocrystal 
in $I_{\rm{crys}}(X/\lW)$,  then  for all $q\geq0$, we get   an
object $\rr^qf_{\mathrm{crys}*}(\mathcal{E})$ in the isogeny category of $\mathcal{O}_{S/\lW}$-modules. The main result of the paper is that, if $S$ is smooth, $\rr^qf_{\mathrm{crys}*}(\mathcal{E})$ has a richer structure, indeed it is an isocrystal, \emph{i.e.} an object of $I_{\rm{crys}}(S/\lW)$.

\begin{thmI}\label{main theorem}
Let $f\colon X\rightarrow S$ be a smooth and proper morphism of smooth quasi-compact $k$-schemes and let $\mathcal{E}$ be an isocrystal 
in $I_{\rm{crys}}(X/\lW)$. Then, for all $q\geq0$,  $\rr^qf_{\mathrm{crys}*}(\mathcal{E})$ is an isocrystal in $I_{\rm{crys}}(S/\lW)$.
\end{thmI}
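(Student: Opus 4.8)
The first reduction is to the case where $S$ is affine and admits a lift to a smooth $p$-adic formal $\lW$-scheme $\widehat{\lS}$; since the statement is local on $S$ and crystals up to isogeny satisfy descent for the relevant topologies, this loses nothing. Over such an $S$ the category $I_{\crys}(S/\lW)$ is equivalent (after passing to isocrystals) to the category of coherent modules with integrable, topologically quasi-nilpotent connection on $\widehat{\lS}$, tensored with $K$; so the goal becomes: produce on the $K$-coherent sheaf underlying $\rr^q f_{\crys*}(\mathcal{E})$ a canonical integrable connection, show it is coherent over $\sO_{\widehat{\lS}}\otimes K$, and check the quasi-nilpotence (a.k.a.\ overconvergence-at-the-level-of-crystals) condition. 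Here I would choose a lattice $E$ for $\mathcal{E}$, so that $\rr^q f_{\crys*}(\mathcal{E}) = K\otimes \rr^q f_{\crys*}(E)$, and work with the crystal $E$ throughout, inverting $p$ only at the end.

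The core input is the comparison between crystalline and de Rham cohomology. After possibly localizing further on $S$, one can find a smooth formal lift $\widehat{\lX}$ of $X$ together with a smooth proper lift $\widehat{\lf}\colon \widehat{\lX}\to\widehat{\lS}$ of $f$ — or, when no global lift of $f$ exists, one uses a simplicial/hypercover resolution by such lifts, exactly as in Berthelot's construction of crystalline cohomology. The crystal $E$ then corresponds to a coherent $\sO_{\widehat{\lX}}$-module $M$ with integrable connection $\nabla$ relative to $\lW$, and by the base-change/comparison theorem for crystalline cohomology (Berthelot, Ogus) there is a canonical isomorphism
\[
\rr^q f_{\crys*}(E)\ \simeq\ \rr^q \widehat{\lf}_*\bigl(M\otimes\Omega^\bullet_{\widehat{\lX}/\widehat{\lS}}\bigr)
\]
after suitable completion, compatibly with the structure. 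One then equips the right-hand side with the Gauss--Manin connection, using $\nabla$ and the short exact sequence $0\to\widehat{\lf}^*\Omega^1_{\widehat{\lS}/\lW}\to\Omega^1_{\widehat{\lX}/\lW}\to\Omega^1_{\widehat{\lX}/\widehat{\lS}}\to 0$; this is the standard construction of the Gauss--Manin connection transported to the $p$-adic formal setting. After $\otimes K$, properness and smoothness give coherence of the cohomology sheaves, finishing the identification of $\rr^q f_{\crys*}(\mathcal{E})$ with an object of the de Rham side.

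It remains to verify that the connection just constructed is \emph{topologically quasi-nilpotent}, i.e.\ that the resulting module with connection genuinely descends to a crystal up to isogeny, and that the construction is independent of all the auxiliary lifts (so that it glues over $S$). Independence of lifts is handled by the usual functoriality of the crystalline--de Rham comparison together with the Gauss--Manin formalism being compatible with the Čech/cosimplicial descent data; this is bookkeeping but needs care. The genuinely hard point — and the one I expect to be the main obstacle — is the quasi-nilpotence of the Gauss--Manin connection on $\rr^q f_{\crys*}(E)\otimes K$: there is no a priori reason for the higher direct image of a (topologically) quasi-nilpotent connection to remain quasi-nilpotent, and indeed over a $p$-adic base this is exactly the subtle ``Berthelot-type'' phenomenon. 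The way around it is to argue crystalline-theoretically rather than differential-geometrically: one shows directly that $\rr^q f_{\crys*}(E)$, as a sheaf of $\sO_{S/\lW}$-modules on the crystalline site, satisfies the crystal (rigidity) axiom \emph{up to isogeny} — that is, for a PD-thickening $U\hookrightarrow T$ the transition maps are isomorphisms after inverting $p$ — by using the crystalline base-change theorem for each such thickening and coherence/finiteness to control the $p$-torsion uniformly. In other words the proof of Theorem~I is really the statement that crystalline higher direct images of a crystal, while not crystals on the nose because of $p$-torsion and finiteness pathologies, \emph{become} crystals once one passes to isogeny, precisely because $\otimes K$ kills the obstruction; assembling this uniform control of $p$-torsion across all thickenings, compatibly with the comparison isomorphisms above, is the technical heart of the argument.
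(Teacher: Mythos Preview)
Your overall outline matches the paper's: reduce to $S$ affine, choose a lattice $E$, lift $S$ to a $p$-adically complete flat $\lW$-algebra $\sA$, build a Gauss--Manin object, and compare it with $\rr^q f_{\crys*}(E)$ up to isogeny. Your final paragraph, where you abandon the differential-geometric question of quasi-nilpotence in favour of verifying the crystal axiom on PD-thickenings via base change, is exactly how the paper proceeds; indeed the paper constructs an HPD-stratification on $H_n=\Hl^n((X/\lS)_\crys,E)$ directly (HPD-stratification $\Leftrightarrow$ crystal $\Rightarrow$ topologically quasi-nilpotent connection, so quasi-nilpotence is automatic), and then produces a map of sheaves $E^n_{X/\sA}\to\rr^n f_{\crys*}(E)$ and shows it is an isogeny by evaluating on each $(U,\lT)$.

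The genuine gap is your appeal to ``the crystalline base-change theorem'' as an available tool. Berthelot's theorem \cite[Theorem~7.8]{BO78} requires $E$ to be \emph{flat}, and the existence of a flat (equivalently locally free) lattice for $\mathcal{E}$ is an open problem; at best one can arrange $E$ to be $p$-torsion free, which you do not even specify. For non-flat $E$ and a general PD-thickening $(U,\lT)\in(S/\lW)_\crys$, the base-change map $\dl u^*\rr g_{X/\lS*}(E)\to \rr g'_{X'/\lT*}h_\crys^*(E)$ is \emph{not} a quasi-isomorphism. Establishing that it is nonetheless an isogeny, with $p$-exponent bounded \emph{uniformly} in $(U,\lT)$, is the technical heart of the paper and occupies all of its Section~2. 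The uniform bound is obtained by showing (via locally simple differential rings in the sense of Kedlaya) that any finitely presented module with integrable connection on $\sA$ becomes projective after inverting $p$, so that the derived obstruction terms $\dl^a u^*(-)$ for $a\neq 0$ in the governing spectral sequences are killed by a fixed power of $p$ independent of the thickening. Your phrase ``use the crystalline base-change theorem \ldots\ and coherence/finiteness to control the $p$-torsion uniformly'' names the correct target but elides the fact that a new base-change theorem must first be proved; that is the missing idea.
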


The above theorem generalises a result of Morrow, which proved the above theorem for the trivial isocrystal (\cite{Mor18}). Our proof follows the lines of his proof: we explain here the main ideas. 

First, using Zariski descent, one reduces to the case in which
$S=\Spec A$ is affine; now $A$ can be lifted to a $p$-adically
complete flat $W$-algebra $\mathcal{A}$, such that
$\sA_n=\mathcal{A}/p^n\sA$ is a smooth $W_n\coloneq W/p^nW$ algebra for
all $n\geq 1$. Set $\lW_n\coloneq\Spec W_n$. Since $X$ is smooth
over $k$, there exists a $p$-torsion free crystal $E$ on $X$
which is a lattice for $\mathcal{E}$, then one has a
Gauss--Manin crystal at one's disposal. Indeed, given a
$p$-torsion free crystal $E$ on $\Crys(X/\lW_n)_{\crys}$, one
can construct a natural HPD-stratification on the finitely
generated  $\sA$-module $\varprojlim_n(\rr^q
f_{\rm{crys}*}(E))_{\Spec\sA_n}$ over $W$. Using the fact that
$\sA_n$ is $W_n$-smooth for all $n\in\N^+$, the HPD-stratification on $\varprojlim_n(\rr^q f_{\rm{crys}*}(E))_{\Spec\sA_n}$ is equivalent to  a crystal $E_{X/\sA}^q$ on $(S/\lW)_{\crys}$ -- the Gauss--Manin crystal. Moreover, there is a natural map $$E_{X/\sA}^q\longrightarrow \rr^q f_{\rm{crys}*}(E)$$ of sheaves on $(S/\lW)_{\crys}$ which turns out to be an isomorphism after 
inverting $p$. This shows that $\rr^q
f_{\rm{crys}*}(E)\otimes K$ (see Definition \ref{higher direct image up to
isogeny}) is in $I_{\rm{crys}}(S/\lW)$.

A key ingredient of the above proof is the Berthelot's base change theorem for crystalline cohomology \cite[Theorem 7.8]{BO78} which only holds for flat crystals.
In Morrow's paper  the trivial isocrystal $K\otimes
\mathcal{O}_{X/\lW}$  admits a lattice, e.g.  $\sO_{X/\lW}$, which is
flat, that is, $-\otimes\sO_{X/\lW}$ is exact in the ringed
topos $((X/\lW)^\sim_\crys, \sO_{X/\lW})$.  But in general the existence of a flat lattice is not known (see for example \cite{ES15}). This becomes a central theme of  this paper: in \S\ref{section:basechange} we develop a crystalline base change theory for crystals that may not be flat; 
instead of requiring that the base change map is an isomorphism we require that it is an isomorphism after inverting $p$. The proof follows closely the original proof of Berthelot's base change theorem, namely it uses cohomological descent to reduce the problem to the affine case and then work with the quasi-nilpotent connections and the corresponding de Rham complex. But the argument from there on has to be changed due to the lack of the flatness condition. We have to use a spectral sequence to find a uniformly large $N$ so that $p^N$ kills both the kernel and the cokernel of the base change map. Shiho also studied in \cite{Shi07I} isocrystals which do not necessarily admit  flat lattices, but his results do not fit our situation.

We prove several variants of base change isomorphisms (see Theorem
\ref{Toninibasechange}, Theorem \ref{Toninibasechange Gamma} and Theorem \ref{basechange for crystalpush}). Here we mention the following.

\begin{thmII}\label{underived base change} Consider a cartesian diagram
   \[
  \begin{tikzpicture}[xscale=1.5,yscale=-1.2]
    \node (A0_0) at (0, 0) {$X'$};
    \node (A0_1) at (1, 0) {$X$};
    \node (A1_0) at (0, 1) {$S'$};
    \node (A1_1) at (1, 1) {$S$};
    \path (A0_0) edge [->]node [auto] {$\scriptstyle{h}$} (A0_1);
    \path (A0_0) edge [->]node [auto] {$\scriptstyle{f'}$} (A1_0);
    \path (A0_1) edge [->]node [auto] {$\scriptstyle{f}$} (A1_1);
    \path (A1_0) edge [->]node [auto] {$\scriptstyle{v}$} (A1_1);
  \end{tikzpicture}
  \]
of quasi-compact $k$-schemes with $f$ smooth and proper. Let
 $E\in\Crys(X/\lW)$ and assume $S$ is smooth over $k$. Then for all $n\in \N$ the canonical map
 \[
 v^*_\crys\rr^nf_{\crys *}(E)\arr \rr^nf'_{\crys *}(h_\crys^*E)
 \]
 is an isomorphism of isocrystals in $I_{\rm{crys}}(S'/\lW)$.
\end{thmII}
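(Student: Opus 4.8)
The plan is to reduce Theorem~II to the statement of Theorem~I together with the crystalline base change results for non-flat crystals developed in \S\ref{section:basechange}. The first step is to work Zariski-locally on $S$, so I may assume $S = \Spec A$ with $A$ admitting a $p$-adically complete flat $\lW$-lift $\sA$ whose reductions $\sA_n$ are smooth over $\lW_n$; by base change along $v$ one also gets a lift of $S'$. Since $S$ is smooth over $k$, one uses the same input as in the proof of Theorem~I: there exists a $p$-torsion free crystal $E_0$ on $X$ which is a lattice for any given isocrystal, but in the statement of Theorem~II we are already given an honest crystal $E\in\Crys(X/\lW)$, so one may replace $E$ by $E/(p\text{-torsion})$, which changes $\rr^n f_{\crys*}(E)$ only by $p$-power torsion and hence is invisible after inverting $p$. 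Thus I reduce to the case $E$ $p$-torsion free (equivalently, flat as a sheaf of abelian groups), where Berthelot's base change theorem and its non-flat refinement apply.

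The core of the argument is then the following chain of identifications, each valid up to isogeny. By the Gauss--Manin construction recalled in the introduction, $K\otimes\rr^n f_{\crys*}(E)$ is the isocrystal $E_{X/\sA}^n$ attached to the HPD-stratified $\sA$-module $\varprojlim_m (\rr^n f_{\crys*}(E))_{\Spec\sA_m}$, and similarly on the $S'$ side with the lifted data. The map $v^*_\crys \rr^n f_{\crys*}(E)\to \rr^n f'_{\crys*}(h^*_\crys E)$ is, after inverting $p$, identified with the base change map on the underlying modules-with-stratification, so it suffices to show the latter is an isomorphism of $\sA'$-modules compatibly with stratifications. This is exactly where the variant base change theorems \ref{Toninibasechange}, \ref{Toninibasechange Gamma}, \ref{basechange for crystalpush} enter: they say precisely that for a $p$-torsion free crystal $E$ the natural map $v^*_\crys\rr^n f_{\crys*}(E)\to \rr^n f'_{\crys*}(h^*_\crys E)$ has kernel and cokernel killed by a fixed power $p^N$, i.e.\ is an isomorphism after $\otimes K$. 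One must check the compatibility of these base change maps with the HPD-stratifications, which amounts to naturality of the base change map in the two canonical projections $\sA \otimes_\lW \sA \rightrightarrows \sA$ (and their divided-power completions) --- a formal consequence of functoriality of $\rr f_{\crys*}$ and of the construction of the stratification.

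To finish, one invokes the equivalence between crystals on $(S'/\lW)_\crys$ and HPD-stratified $\sA'$-modules (valid because $\sA'_n/\lW_n$ is smooth), which is an equivalence of categories; applying it to the isomorphism of stratified modules just produced yields the desired isomorphism of isocrystals in $I_\crys(S'/\lW)$. Finally one checks that the locally-defined isomorphisms glue: this follows from the canonicity of all the maps involved (the base change map, the Gauss--Manin comparison, and the stratification), so Zariski descent closes the argument.

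The main obstacle is the middle step: establishing that the non-flat base change map is an isomorphism after inverting $p$ \emph{and} that this comparison is compatible with the HPD-stratifications. The base change statement itself is the technical heart of \S\ref{section:basechange}, requiring the spectral-sequence bound on a uniform $N$ with $p^N$ annihilating kernel and cokernel; here it is only to be cited. The genuinely new bookkeeping for Theorem~II is the stratification compatibility --- one must run the base change comparison not just over $\sA$ but functorially over the divided-power envelope of $\sA\otimes_\lW\sA$, tracking that the two pullbacks of the base-changed module agree with the base change of the two pullbacks, uniformly in the $p$-power bounds. Once that uniformity is in hand, passing to the $K$-linear category kills all discrepancies and the isomorphism of isocrystals follows formally.
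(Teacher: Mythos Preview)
Your outline has two genuine gaps.

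\textbf{First gap (smoothness of $S'$).} You assume that ``by base change along $v$ one also gets a lift of $S'$'' and then invoke the equivalence between crystals on $(S'/\lW)_\crys$ and HPD-stratified $\sA'$-modules ``valid because $\sA'_n/\lW_n$ is smooth.'' But Theorem~II does not assume $S'$ smooth over $k$; only $S$ is. So there need be no flat $p$-adically complete lift $\sA'$ with smooth reductions, and the crystal/stratified-module dictionary is unavailable on the $S'$ side. The paper's proof avoids this entirely: it works object by object in $(S'/\lW)_\crys$, evaluating the map on each $(S',\lT',\delta')$ and producing a uniform $p^N$ bound, never needing a global lift of $S'$.

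\textbf{Second gap (derived vs.\ underived base change).} You say the theorems~\ref{Toninibasechange}, \ref{Toninibasechange Gamma}, \ref{basechange for crystalpush} ``say precisely that \ldots{} the natural map $v^*_\crys\rr^n f_{\crys*}(E)\to \rr^n f'_{\crys*}(h^*_\crys E)$ has kernel and cokernel killed by a fixed power $p^N$.'' They do not. Each of those results controls the \emph{derived} map
\[
\Hl^n\bigl(\dl v^*_\crys \rr f_{\crys*}(E)\bigr)\longrightarrow \rr^n f'_{\crys*}(h^*_\crys E),
\]
not the underived one. The missing piece is the comparison
\[
\phi\colon v^*_\crys\bigl(\Hl^n(D)\bigr)\longrightarrow \Hl^n\bigl(\dl v^*_\crys D\bigr),\qquad D=\rr f_{\crys*}(E),
\]
which is not automatic because $v^*_\crys$ is not exact. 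In the paper this is handled by the hypercohomology spectral sequence $E_2^{ab}=\dl^a u^*(\Hl^b(D_\lT))\Rightarrow \Hl^{a+b}(\dl u^*D_\lT)$ on each object $\lT'$ of the site, together with the crucial input that $\Hl^b(D)$ is isogenous to an actual crystal (this is Theorem~I), whose underlying module becomes projective after inverting $p$; then Lemma~\ref{projective p inverted} kills $E_2^{ab}$ for $a\neq 0$ by a uniform $p^N$, and Lemma~\ref{almost degenerate spectral sequence} gives that $\phi$ is a $p^{N(l+1)}$-isogeny. You invoke Theorem~I only to identify the isocrystal structure, not for this step; but this step is exactly where Theorem~I does the real work in the proof of Theorem~II.

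In short, your sketch conflates the derived and underived base change maps and relies on a description of crystals on $S'$ that is only available when $S'$ is smooth. The paper's argument factors the underived map as $v^*_\crys\Hl^n(D)\to\Hl^n(\dl v^*_\crys D)\to \rr^n f'_{\crys*}(h^*_\crys E)$, controls the second arrow by Theorem~\ref{basechange for crystalpush}, and controls the first arrow by a spectral sequence whose higher terms are killed using Theorem~I and Lemma~\ref{projective p inverted}.
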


A recent result proven by Xu (\cite{Xu2019}) deals with a convergent incarnation of Berthelot's conjecture: he proves that the derived pushforward functor preserves convergent isocrystals, in the context of the convergent topos defined by Ogus \cite{Ogus84}. Let $f\colon X\rightarrow S$ be a proper and smooth map as above; Xu considers a convergent isocrystal $\mathcal{E}\in I_{\rm{conv}}(X/\lW)$, together with $\rr^qf_{\rm{conv}*}(\mathcal{E})$; he uses Shiho’s base change \cite[Theorem 1.19]{Shi07I} to show that $\rr^qf_{\rm{conv}*}(\mathcal{E})$ is a $p$-adically convergent isocrystal. Then he develops a strong version of Frobenius descent which allows him to prove that $\rr^qf_{\rm{conv}*}(\mathcal{E})$ is indeed a convergent isocrystal on $S$ using Dwork’s trick. He then proceeds to remove the smoothness hypothesis for the base $S$. It would be interesting to know if even in our setting one can remove the smoothness hypothesis. In any case, when $S$ is smooth over $k$,  the category of convergent isocrystals is a full subcategory of the category of isocrystals \cite[Theorem 0.7.2]{Ogus84}: there is a fully faithful functor
$\iota: I_{\rm{conv}}(S/\lW)\rightarrow I_{\rm{\crys}}(S/\lW)$
(and likewise for $X/\lW$). Our result and Xu's result are
independent, in the sense that none of the two implies the
other. On the other hand they are compatible in the sense that $\iota(\rr^q f_{\rm{conv}*}
\mathcal{E}) \cong \rr^qf_{\crys*}(\iota(\mathcal{E}))$ (see Remark \ref{Xu comparison} and the discussion at the end of \cite[Section 1.9]{Xu2019}).

We remark that if $X$ is a smooth, quasi-compact and connected $k$-scheme, then the category $I_{\rm{crys}}(X/\lW)$ is a Tannakian category, hence when $X$ has a $k$-rational point $x$, one can define the crystalline fundamental group $\pi_1^{\rm{crys}}(X/\lW, x)$\footnote{While the authors were revising this paper, a preprint with a new approach to crystals appeared \cite{Dri18}.}. This group scheme has recently been studied deeply: it has been conjectured by de~Jong that for a connected projective variety over an algebraically closed field in characteristic $p>0$ with trivial étale fundamental group, there are no non-constant isocrystals. The conjecture is still open but several results have been obtained (\cite{Kats18}, \cite{ES18}, \cite{ES16}, \cite{Shi14}). 
Moreover, we also remark that the pro-unipotent completion of $\pi_1^{\rm{crys}}(X/\lW, x)$ is considered to be the crystalline realisation of the motivic fundamental group and it has been studied by Shiho in the more general context of log geometry (\cite{Shi00}, \cite{Shi02}).

As a consequence of our main result we obtain the Künneth formula for the crystalline fundamental group. 

\begin{thmIII}\label{Kunneth}
 Let $k$ be a perfect field of characteristic $p>0$, let $X$ and $Y$ be smooth connected $k$-schemes with $Y$ proper and suppose that $x\in X(k)$, $y\in Y(k)$ are two rational points.
 Then the canonical morphism between the crystalline fundamental groups
 $$\pi_1^{\rm{crys}}(X\times_kY/\lW,(x,y))\arr \pi_1^{\rm{crys}}(X/\lW, x) \times_K \pi_1^{\rm{crys}}(Y/\lW, y)$$
 is an isomorphism.
\end{thmIII}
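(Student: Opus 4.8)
The plan is to deduce Theorem \ref{Kunneth} from Theorem \ref{main theorem} (the K\"unneth/pushforward result for isocrystals) by the standard Tannakian dictionary, following the template of analogous K\"unneth theorems for the \'etale and de Rham fundamental groups. Recall that for a smooth connected quasi-compact $k$-scheme $Z$ with a rational point $z$, the category $I_{\rm{crys}}(Z/\lW)$ is Tannakian with fiber functor at $z$, and $\pi_1^{\rm{crys}}(Z/\lW,z)$ is its Tannaka dual. A morphism of affine group schemes is an isomorphism if and only if the induced pullback functor on representation categories is an equivalence; concretely, one checks (i) that it is fully faithful, which amounts to the pullback functor being fully faithful on $\Hom$-spaces, i.e. that $\Hom$ in the source is computed by $\Hom$ in the target — equivalently that every subobject of a pulled-back object is pulled back; and (ii) that it is a closed immersion on each factor, plus a generation statement saying the images of the two factors generate. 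In the product situation this reduces to the following three claims about the external tensor product functor $\boxtimes\colon I_{\rm{crys}}(X/\lW)\times I_{\rm{crys}}(Y/\lW)\to I_{\rm{crys}}(X\times_k Y/\lW)$, $(\mathcal E,\mathcal F)\mapsto \pr_X^*\mathcal E\otimes \pr_Y^*\mathcal F$: (a) the two projection pullbacks $\pr_X^*$ and $\pr_Y^*$ are fully faithful; (b) $I_{\rm{crys}}(X\times_kY/\lW)$ is generated as a Tannakian category by the images of $\pr_X^*$ and $\pr_Y^*$; (c) for $\mathcal E$ on $X$ and $\mathcal F$ on $Y$ the K\"unneth map $\Hl^0$-type comparison holds, i.e. $\Hom(\mathds 1,\pr_X^*\mathcal E\otimes \pr_Y^*\mathcal F)\cong \Hom(\mathds 1,\mathcal E)\otimes\Hom(\mathds 1,\mathcal F)$ — but this follows once one has global sections behaving like $\Hl^0$ of a relative cohomology.

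The key step where the main theorem enters is a \emph{relative} statement: apply Theorem \ref{main theorem} to the projection $f=\pr_X\colon X\times_k Y\to X$, which is smooth and proper because $Y$ is smooth and proper over $k$. For an isocrystal $\mathcal G$ on $X\times_k Y$ one gets that $\rr^q\pr_{X,\crys*}(\mathcal G)$ lies in $I_{\rm{crys}}(X/\lW)$ for all $q$. First I would establish the projection formula / compatibility $\rr^q\pr_{X,\crys*}(\pr_X^*\mathcal E\otimes \pr_Y^*\mathcal F)\cong \mathcal E\otimes \rr^q\pr_{X,\crys*}(\pr_Y^*\mathcal F)$, and then the base change isomorphism (Theorem \ref{underived base change}, applied to the cartesian square expressing $\pr_X$ as the base change of $Y\to\Spec k$ along $X\to\Spec k$) identifying $\rr^q\pr_{X,\crys*}(\pr_Y^*\mathcal F)$ with the constant isocrystal on $X$ associated to $\Hl^q_{\rm{crys}}(Y/\lW)\otimes\mathcal F$-coefficients, i.e. with $a_X^*\rr^q f_{Y,\crys*}(\mathcal F)$ where $a_X\colon X\to\Spec k$ and $f_Y\colon Y\to\Spec k$. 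Taking $q=0$ and $\mathcal F=\mathds 1$ this gives $\pr_{X,\crys*}\pr_X^*\mathcal E\cong\mathcal E$, whence $\pr_X^*$ is fully faithful by the projection formula and adjunction (the unit $\mathcal E\to\pr_{X,\crys*}\pr_X^*\mathcal E$ is an isomorphism); this is claim (a), and symmetrically for $\pr_Y^*$. For claim (c), one has $\Hom_{X\times_k Y}(\mathds 1,\pr_X^*\mathcal E\otimes\pr_Y^*\mathcal F)=\Hom_X(\mathds 1,\pr_{X,\crys*}(\pr_X^*\mathcal E\otimes\pr_Y^*\mathcal F))=\Hom_X(\mathds 1,\mathcal E\otimes a_X^*\Hom_Y(\mathds 1,\mathcal F))=\Hom_X(\mathds 1,\mathcal E)\otimes\Hom_Y(\mathds 1,\mathcal F)$, using the base-change identification at $q=0$ and that $a_X^*$ of a $K$-vector space is the corresponding constant isocrystal.

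The remaining and genuinely substantive point is the generation claim (b): that every isocrystal $\mathcal G$ on $X\times_k Y/\lW$ is a subquotient of a sum of objects of the form $\pr_X^*\mathcal E\otimes\pr_Y^*\mathcal F$. Here I would argue Tannakian-theoretically rather than geometrically. Let $G=\pi_1^{\rm{crys}}(X\times_k Y/\lW,(x,y))$, $G_X=\pi_1^{\rm{crys}}(X/\lW,x)$, $G_Y=\pi_1^{\rm{crys}}(Y/\lW,y)$, and let $\phi\colon G\to G_X\times_k G_Y$ be the canonical map. The already-established full faithfulness of $\pr_X^*$ and $\pr_Y^*$ shows that the composites $G\to G_X\times G_Y\to G_X$ and $\to G_Y$ are faithfully flat (surjective), since a pullback functor that is fully faithful and whose image is closed under subobjects corresponds to a faithfully flat morphism of group schemes; this shows $\phi$ is surjective after projecting to each factor, but one needs $\phi$ itself faithfully flat, equivalently that $\boxtimes$ has image closed under subobjects. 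To get this I would invoke the standard criterion (as in Esnault--Hai--Sun, or the de Rham K\"unneth of Lazda, or \cite{Har75}): the map $\phi\colon G\to G_X\times G_Y$ is faithfully flat provided the two projections $G\to G_X$, $G\to G_Y$ are faithfully flat \emph{and} the two normal subgroups $\ker(G\to G_X)$ and $\ker(G\to G_Y)$ together generate $G$; the latter is automatic here because $\ker(G\to G_X)\supseteq$ the image of $\pi_1^{\rm{crys}}$ of the fiber $X\times_k Y\to X$ which, via base change, is $\pi_1^{\rm{crys}}(Y_{\bar k})$-like. Concretely, the cleanest route is: the fiber of $\pr_X$ over $x$ is $Y$ (since $x\in X(k)$), so restriction along $Y=\pr_X^{-1}(x)\hookrightarrow X\times_k Y$ gives $G_Y=\pi_1^{\rm{crys}}(Y/\lW,y)\to G$ landing in $\ker(G\to G_X)$, and one checks this is an isomorphism onto the kernel using that $\rr^0\pr_{X,\crys*}\pr_X^*=\id$ (full faithfulness of $\pr_X^*$) forces $\ker(G\to G_X)$ to have no nontrivial invariants mismatch — more precisely, that the kernel is computed by the fiber category, which by base change (Theorem \ref{underived base change}) over the point $x$ is exactly $I_{\rm{crys}}(Y/\lW)$. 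Then $G$ is an extension of $G_X$ by $G_Y$ with a splitting (from $y\in Y(k)$ giving a section $X\cong X\times y\hookrightarrow X\times_k Y$, i.e. a retraction $G\to G_Y$), so $G\cong G_Y\rtimes G_X$; finally, symmetry in $X$ and $Y$ — using that $X$ need not be proper is fine for this direction since we only used $Y$ proper for the pushforward — shows the action is trivial, giving $G\cong G_X\times G_Y$. I expect the delicate part to be exactly this identification of $\ker(G\to G_X)$ with $G_Y$: it requires knowing that the base-change isomorphism of Theorem \ref{underived base change} holds not merely for cohomology sheaves but in a form precise enough to identify the full subcategory generated, which in Tannakian terms is the statement that $\pr_X^*$ together with restriction to the fiber $Y$ realizes $I_{\rm{crys}}(X\times_k Y/\lW)$ as an extension — and one must verify this restriction functor is essentially surjective, i.e. every isocrystal on $Y$ extends to $X\times_k Y$, which is clear via $\pr_Y^*$. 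Modulo this, the rest is the formal Tannakian bookkeeping of semidirect products and the triviality of the extension from the section, exactly parallel to the \'etale and de Rham cases.
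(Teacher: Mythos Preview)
Your proposal has the right ingredients—Theorems~\ref{main theorem} and~\ref{underived base change} are exactly what the paper uses—but the argument for identifying $\ker(G\to G_X)$ with $G_Y$ stays at the level of a gesture, and the final ``symmetry'' step is not valid as written.

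The paper's route is cleaner. It sets up the commutative diagram
\[
\begin{tikzcd}
1\ar[r] & \pi_1^{\crys}(Y/\lW,y)\ar[r]\ar[d,equal] & \pi_1^{\crys}(X\times_kY/\lW,(x,y))\ar[r]\ar[d] & \pi_1^{\crys}(X/\lW,x)\ar[r]\ar[d,equal] & 1\\
1\ar[r] & \pi_1^{\crys}(Y/\lW,y)\ar[r] & \pi_1^{\crys}(X/\lW,x)\times_k\pi_1^{\crys}(Y/\lW,y)\ar[r] & \pi_1^{\crys}(X/\lW,x)\ar[r] & 1
\end{tikzcd}
\]
and observes that it suffices to prove the top row exact; the five lemma then gives the isomorphism directly, with no need to argue a semidirect product is a direct product. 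For the exactness they invoke the concrete Tannakian criterion of \cite[Theorem~A.1,(iii)]{EPS}: since $p_1=\pr_X$ has a section, condition~(c) there holds, and one must check that for every $\mathcal E\in I_\crys(X\times_kY/\lW)$ the maximal trivial subobject of $x_\crys^*\mathcal E$ on $Y$ is the restriction of a subobject $\mathcal F\subseteq\mathcal E$ with $\mathcal F$ pulled back from $X$. They take $\mathcal F=p_{1,\crys}^*p_{1,\crys*}\mathcal E$; Theorem~\ref{main theorem} is used to know $p_{1,\crys*}\mathcal E$ is an isocrystal, so that $p_{1,\crys*}$ is right adjoint to $p_{1,\crys}^*$ on isocrystals and (since $G\to G_X$ is surjective) computes invariants under the kernel, whence $\mathcal F\to\mathcal E$ is injective. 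Theorem~\ref{underived base change} applied to the cartesian square with $Y\to X\times_kY$ over $\Spec k\to X$ then gives $x_\crys^*\mathcal F\cong g_\crys^*g_{\crys*}(x_\crys^*\mathcal E)$, which is exactly the maximal trivial subobject. That is the entire proof.

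Your sentence ``the kernel is computed by the fiber category, which by base change over $x$ is exactly $I_\crys(Y/\lW)$'' is the gap: base change for cohomology does not by itself identify the Tannaka dual of the fiber category with the kernel; one needs precisely a criterion of the \cite{EPS} type, and the specific condition to verify is the one above. Your closing step, invoking ``symmetry in $X$ and $Y$'' to pass from $G_Y\rtimes G_X$ to $G_Y\times G_X$, fails as stated because $X$ is not assumed proper, so you cannot push forward along $\pr_Y$ to rerun the argument with the roles swapped. (It can be rescued: the existence of the homomorphism $G\to G_Y$ induced by $\pr_Y$, restricting to the identity on $G_Y$ and trivially on the image of $G_X$, already forces the conjugation action in $G_Y\rtimes G_X$ to be trivial. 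But the paper's five-lemma approach avoids this entirely.)
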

By the Eckman--Hilton argument we also get the following. 
\begin{thmIV}\label{Abelian}
 Let $A$ be an abelian variety over a perfect field $k$ of positive characteristic. Then $\pi_1^{\rm{crys}}(A/\lW, 0)$ is an abelian group scheme.
\end{thmIV}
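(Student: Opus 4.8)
The plan is to equip $G:=\pi_1^{\crys}(A/\lW,0)$ with a second multiplication coming from the group law of $A$, and then to invoke the Eckman--Hilton argument to force the group law of $G$ itself to be commutative. Throughout, everything substantial is carried by Theorem \ref{Kunneth}; the rest is formal.

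First I would recall the functoriality of the crystalline fundamental group: a pointed morphism $\varphi\colon (X,x)\to (Y,y)$ of smooth connected quasi-compact $k$-schemes gives a tensor functor $\varphi^*\colon I_{\crys}(Y/\lW)\to I_{\crys}(X/\lW)$ compatible with the fibre functors at $y$ and $x$, hence by Tannakian duality a homomorphism of group schemes $\varphi_*\colon \pi_1^{\crys}(X/\lW,x)\to \pi_1^{\crys}(Y/\lW,y)$, in a manner compatible with composition. Since an abelian variety $A$ is smooth, connected and proper, Theorem \ref{Kunneth} applied with $X=Y=A$ and the rational point $0$ gives a canonical isomorphism of group schemes
\[
\pi_1^{\crys}(A\times_k A/\lW,(0,0))\ \cong\ G\times_k G,
\]
whose two components are induced by the projections $\pr_1,\pr_2\colon A\times_k A\to A$.

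Next, the multiplication $m\colon A\times_k A\to A$ is a pointed morphism $((A\times_k A),(0,0))\to (A,0)$, so it induces a homomorphism of group schemes $\mu:=m_*\colon G\times_k G\to G$. The sections $i_1=(\id_A,0)$ and $i_2=(0,\id_A)\colon A\to A\times_k A$ satisfy $m\circ i_1=m\circ i_2=\id_A$. Since $\pr_1\circ i_1=\id_A$ while $\pr_2\circ i_1$ factors through $\Spec k$, whose crystalline fundamental group is trivial (the category of isocrystals on $\Spec k$ is just finite-dimensional $K$-vector spaces with its forgetful fibre functor), and symmetrically for $i_2$, under the identification above one gets $i_{1*}(g)=(g,e)$ and $i_{2*}(g)=(e,g)$ with $e$ the identity of $G$. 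Functoriality then yields $\mu(g,e)=g=\mu(e,g)$, so $e$ is a two-sided unit for $\mu$.

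Finally I would run the Eckman--Hilton argument on $R$-valued points for every $k$-algebra $R$. The set $G(R)$ carries its group law $\cdot$ (with unit $e$) and the operation $\mu_R$ (also with unit $e$), and the fact that $\mu$ is a homomorphism of group schemes translates precisely into the interchange law
\[
\mu(g_1\cdot h_1,\,g_2\cdot h_2)=\mu(g_1,g_2)\cdot\mu(h_1,h_2)\qquad(g_i,h_i\in G(R)).
\]
The classical Eckman--Hilton argument then forces $\mu_R=\cdot$ and shows this operation is commutative, so $G(R)$ is an abelian group for every $R$; that is, $\pi_1^{\crys}(A/\lW,0)$ is an abelian group scheme. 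I do not expect any serious obstacle here: the only step requiring a little care is the identification of $i_{1*},i_{2*}$ with the coordinate inclusions $G\to G\times_k G$, which comes from the naturality of the K\"unneth isomorphism with respect to the projections, and the (trivial) verification that the constant map $A\to \Spec k\to A$ induces the trivial homomorphism on crystalline fundamental groups.
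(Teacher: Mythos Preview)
Your proposal is correct and follows exactly the same strategy as the paper's proof: use Theorem~\ref{Kunneth} to transport the addition $m\colon A\times_k A\to A$ to a second unital operation on $G=\pi_1^{\crys}(A/\lW,0)$ and then apply the Eckman--Hilton argument. The paper states this in two lines; your write-up simply spells out the verification that $e$ is a unit for $\mu$ and the interchange law (one cosmetic slip: the Tannaka dual $G$ is an affine group scheme over $K$, so in the last step one should take $R$ to be a $K$-algebra rather than a $k$-algebra).
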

Analogous results for other fundamental groups have been obtained by Battiston
\cite{Bat16} and D'Addezio \cite{DAd21}. 

The Künneth formula, as in the étale case, is a consequence of the homotopy
exact sequence for the crystalline fundamental group, but our argument does not
use the homotopy exact sequence. It is an open problem to show the existence of
a homotopy exact sequence for the crystalline fundamental group, which has been
shown is several other contexts recently (\cite{Zhang2014b}, \cite{dS2015}, \cite{LP17}, \cite{DiPShi18}, \dots).

The content of each section is as follows. In \S\ref{section:tannakian} we define the crystalline fundamental group; to do so we prove that the category of isocrystals on a smooth, quasi-compact and connected $k$-scheme is Tannakian. In \S\ref{section:basechange} we prove several generalisations of the base change for crystalline cohomology. We consider a PD-scheme $\lS=(\lS, I, \gamma)$ over $\lW$, requiring that $p
\in I$, we let $S=V(I)$, and we consider an $S$-scheme $X$. We
denote by $g\colon X\arr \lS$ the structure map, and by
$g_{X/\lS}$ the morphism of topoi $g\circ u_{X/\lS}\colon
(X/\lS)^{\sim}_{\rm{crys}}\rightarrow \lS^{\sim}_{\rm{Zar}}$. In \S\ref{subsection:killed_by_p} we prove the generalised base change theorem for $g_{X/\lS}$ when $p$ is nilpotent in $\mathcal{O}_{\lS}$; this includes, as a special case, the classical Berthelot's base change theorem for crystalline cohomology. In \S\ref{subsection:affine} we consider the case in which $\lS$ is affine. In this case we consider the functor $\varprojlim_n\Gamma \circ g_{X/(\lS/p^n)*}$; we prove a base change theorem for this functor. In the last part of section \S\ref{section:basechange} we consider a proper and smooth morphism of smooth $k$-schemes $f\colon X\rightarrow S$ as above, and we prove a base change theorem for the functor $f_{\crys*}$. In section \S\ref{section:pushforward}, we get our main result about Berthelot's conjecture for isocrystals. In \S\ref{section:kunneth} we prove the Künneth formula for the crystalline fundamental group.

\section*{Acknowledgements}
This work started thanks to a question of Tomoyuki Abe: he asked us whether a Künneth formula for the crystalline fundamental group would have been true. We want to thank him sincerely also for several discussions related to this paper and for his encouragement. We had a very useful conversation with Atsushi Shiho in Berlin about this work: it is a great pleasure to thank him deeply. We are very thankful to Luc Illusie for the interest he showed in our work when we presented these results during a conference at Technical University Münich. The last corollary was suggested by Marco D'Addezio: we want to thank him also for his interest in our work. The authors would like to acknowledge also a useful conversation about derived categories they had with Shane Kelly. We thank Matthew Morrow for his answer to a question about his paper.

Finally, we would like to thank the anonymous referee who did an impressive job of revision. 
Her/his comments greatly improved the shape of our paper, both mathematically and
exposition-wise.

\section*{Notation}

An ideal $I$ in a ring $A$ is called nil if all elements of $I$ are nilpotent (this is called a locally nilpotent ideal in \cite{stacks-project}). We will often use that smooth affine maps have the lifting property for nil ideals (see \cite[\href{https://stacks.math.columbia.edu/tag/07K4}{Tag 07K4}]{stacks-project}).
\section{Tannakian categories of connections and crystalline fundamental group}\label{section:tannakian}
The goal of this section is to define the crystalline fundamental group. Concretely this means introducing the category of isocrystals and proving that it is a Tannakian category. This is done in essentially four steps.
\begin{enumerate}
\item Reduce the problem to the affine case and compare isocrystals with topologically quasi-nilpotent connections.
\item Interpret topologically quasi-nilpotent connections as a particular case of connections with respect to a quotient of the sheaf of algebraic differentials.
\item Show that those connections correspond to differential modules for an associated differential ring.
\item Study differential rings and differential modules following \cite{Ked12}.
    \end{enumerate}
This program is done in the reverse order, so that definitions come first.

\subsection{Differential rings}
We start by introducing some general definitions as in \cite{Ked12}.
\begin{defn}\label{differential modules}
 A differential ring is a pair $(A,\Delta_A)$ where $A$ is a ring and $\Delta_A$ is a Lie algebra
 together with an $A$-module structure and a Lie algebra homomorphism $\iota\colon \Delta_A\arr \Der(A/\Z)=\Hom(\Omega_{A/\Z},A)$ which is $A$-linear. We moreover ask that the following         property holds:
 \begin{equation}\label{eq:differential ring condition}
  [D_1,aD_2]=a[D_1,D_2]+D_1(a)D_2 \text{ for all }a\in A\comma
  D_1,D_2\in \Delta_A.
 \end{equation}
 Notice that the above equation is automatic if $\iota$ is injective. If $X=\Spec A$ we sometimes write $(X,\Delta_A)$ instead of $(A,\Delta_A)$.
 
 If $B$ is a ring, a differential $B$-algebra is a differential ring $(A,\Delta_A)$ such that $A$ is a $B$-algebra and the map $\iota\colon \Delta_A\arr \Der(A/\Z)$ has image in $\Der(A/B)=\Hom(\Omega_{A/B},A)$.
 
 A differential $(A,\Delta_A)$-module (or simply differential $A$-module when $\Delta_A$ is clear from the context) is a pair $(M,\nabla)$ where $M$ is an $A$-module and
 \[
 \nabla\colon \Delta_A \arr \End_\Z(M)
 \]
 is a morphism of Lie algebras which is $A$-linear and satisfies the Leibniz rule, \emph{i.e.}
 \begin{equation}\label{eq:Leibniz rule}
  D(am)=D(a)m+aD(m) \text{ for all } D\in \Delta_A\comma a\in
  A\comma m\in M.
 \end{equation}
 Above and in what follows we write $D(m)$ instead of $\nabla(D)(m)$. 
 
 
 We denote by $\Diff(A,\Delta_A)$ or simply $\Diff(A)$ the category of differential $A$-modules which are of finite presentation (as $A$-modules).

\end{defn}

\begin{rmk}
    Let $(A,\Delta_A)$ be a differential ring and $(E,\nabla_E)$ and $(F,\nabla_F)$ be differential $A$-modules. Their tensor product is given by the $A$-module $E\otimes_{A} F$ and the map
      \[
  \begin{tikzpicture}[xscale=5.4,yscale=-0.6]
    \node (A0_0) at (0, 0) {$\Delta_A$};
    \node (A0_1) at (1, 0) {$\End_\Z(E\otimes_{A} F)$};
    \node (A1_0) at (0, 1) {$D$};
    \node (A1_1) at (1, 1) {$(e\otimes f\mapsto D(e)\otimes
        f+e\otimes D(f)).$};
    \path (A0_0) edge [->]node [auto] {$\scriptstyle{\nabla_{E\otimes_{A} F}}$} (A0_1);
    \path (A1_0) edge [|->,gray]node [auto] {$\scriptstyle{}$} (A1_1);
  \end{tikzpicture}
  \]
Their Hom is instead given by 
the $A$-module $\Hom_{A}(E,F)$ and the map
  \[
  \begin{tikzpicture}[xscale=5.4,yscale=-0.6]
    \node (A0_0) at (0, 0) {$\Delta_A$};
    \node (A0_1) at (1, 0) {$\End_\Z(\Hom_A(E,F))$};
    \node (A1_0) at (0, 1) {$D$};
    \node (A1_1) at (1, 1) {$(\phi\mapsto \nabla_F(D)\circ \phi
        -\phi\circ \nabla_E(D)).$};
    \path (A0_0) edge [->]node [auto] {$\scriptstyle{\nabla_{\Hom_A(E,F)}}$} (A0_1);
    \path (A1_0) edge [|->,gray]node [auto] {$\scriptstyle{}$} (A1_1);
  \end{tikzpicture}
  \]
See also \cite[Def.~1.1.3]{Ked12}. It is easy to see that the category of differential $A$-modules is symmetric monoidal with unit $(A,\nabla)$ where $\nabla\colon \Delta_A\arr \Der(A/\Z)\subseteq \End_\Z(A)$ is the canonical map $\iota$. Moreover the Hom just defined is an internal Hom in the category of differential $A$-modules, that is if $(G,\nabla_G)$ is another differential $A$-module then the canonical isomorphism
\[
\Hom(E\otimes F,G)\arr \Hom(E,\Hom(F,G))
\]
is a map of differential $A$-modules and preserves the subsets of morphisms of differential $A$-modules.

If $E\arrdi \phi F$ is a map of differential $A$-modules then kernel and cokernels are naturally differential $A$-modules.
\end{rmk}

From the discussion above we can conclude that

\begin{prop}\label{first prop of differential modules}
 If $(A,\Delta_A)$ is a differential ring then the category of differential $A$-modules is symmetric monoidal, abelian and has internal homomorphisms. The same is true for $\Diff(A)$ if $A$ is Noetherian.
\end{prop}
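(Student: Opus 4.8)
The plan is to read the statement as a sequence of verifications, all governed by the forgetful functor $U\colon\{\text{differential }A\text{-modules}\}\arr A\text{-Mod}$, $(M,\nabla)\mapsto M$. This functor is faithful and conservative, and by the Remark above it creates kernels and cokernels; it also creates finite biproducts, with the obvious componentwise connection on $E\oplus F$. Since $A\text{-Mod}$ is abelian, it follows that the category of differential $A$-modules is abelian: it is additive, and for a morphism $\phi$ of differential modules the canonical arrow $\mathrm{Coim}\,\phi\arr\mathrm{Im}\,\phi$ — obtained by taking cokernel of the kernel and kernel of the cokernel, all computed in $A\text{-Mod}$ by the Remark — is sent by $U$ to the analogous arrow in $A\text{-Mod}$, which is an isomorphism; as $U$ is conservative, it is an isomorphism of differential $A$-modules. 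One can also observe that a differential structure on $M$ is literally the same datum as an action of a suitable ring generated over $A$ by the symbols in $\Delta_A$ modulo the relations \eqref{eq:differential ring condition} and \eqref{eq:Leibniz rule}, which gives a purely module-theoretic proof of abelianness if one prefers.

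For the symmetric monoidal structure, the data (tensor product, unit $(A,\iota\colon\Delta_A\arr\Der(A/\Z)\subseteq\End_\Z(A))$, internal Hom) are exactly those written down in the Remark; what remains is to check the axioms. The one step that genuinely uses the hypotheses on $(A,\Delta_A)$ is the well-definedness of $D\cdot(e\otimes f)=D(e)\otimes f+e\otimes D(f)$ on $E\otimes_A F$, i.e. its compatibility with the relation $ea\otimes f=e\otimes af$: expanding both sides with the Leibniz rule \eqref{eq:Leibniz rule} and the $A$-linearity of $\nabla_E,\nabla_F$ reduces the discrepancy to the $D(a)$-term, and equality holds by construction. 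One then checks directly that $D\mapsto\nabla_{E\otimes_A F}(D)$ is $A$-linear, satisfies \eqref{eq:Leibniz rule}, and is a Lie algebra homomorphism (the last from the corresponding properties of $\nabla_E,\nabla_F$ plus the derivation property). The associativity, unit and symmetry constraints are those of $A\text{-Mod}$; since their defining formulas are built from $A$-linear maps and are compatible term-by-term on elementary tensors with the Leibniz-type connections on iterated tensor products, they are morphisms of differential $A$-modules, so $U$ upgrades the symmetric monoidal structure of $A\text{-Mod}$. For the internal Hom, in the same routine way one checks that $D\mapsto\bigl(\phi\mapsto\nabla_F(D)\circ\phi-\phi\circ\nabla_E(D)\bigr)$ is a well-defined $A$-linear Lie algebra map on $\Hom_A(E,F)$ satisfying \eqref{eq:Leibniz rule}, and that the tensor–hom adjunction isomorphism $\Hom_A(E\otimes_A F,G)\cong\Hom_A(E,\Hom_A(F,G))$ of $A$-modules is a morphism of differential $A$-modules carrying the subset of differential-module morphisms on one side bijectively onto that on the other; this is precisely the statement that the monoidal category is closed.

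Finally, for $\Diff(A)$ with $A$ Noetherian, "finitely presented" coincides with "finitely generated", and the class of finitely generated $A$-modules is stable under the four operations used above: $E\otimes_A F$ is finitely generated if $E,F$ are, $\Hom_A(E,F)$ is finitely generated if $E,F$ are (here Noetherianness is used), and kernels and cokernels of maps of finitely generated modules are finitely generated; the unit $A$ is finitely generated. Hence every structure built on the category of all differential $A$-modules restricts to $\Diff(A)$, which is therefore symmetric monoidal, abelian and has internal homomorphisms. The only delicate point in the whole argument is the well-definedness over $A$ of the tensor-product connection — which is exactly where axiom \eqref{eq:differential ring condition} is needed — while everything else is bookkeeping.
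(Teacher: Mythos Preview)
Your proof is correct and follows the same approach as the paper, which simply records the proposition as a consequence of the preceding Remark; you have written out in detail the verifications that the Remark left implicit.

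One small inaccuracy in your closing sentence: the well-definedness of the tensor connection on $E\otimes_A F$ (i.e.\ compatibility with the relation $ea\otimes f=e\otimes af$) uses only the Leibniz rule \eqref{eq:Leibniz rule}, as you yourself correctly invoke in the body of the argument, not axiom \eqref{eq:differential ring condition}. Condition \eqref{eq:differential ring condition} is rather a compatibility constraint ensuring that a map $\Delta_A\to\End_\Z(M)$ can simultaneously be $A$-linear, a Lie map, and satisfy Leibniz; it does not enter any individual verification here.
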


\begin{defn}

    Let $R$ be a ring. Let $\mathscr{C}$ be an $R$-linear category and let $R'$ be an $R$-algebra. We denote by $\mathscr{C}\otimes_RR'$
    the category whose objects are exactly those of $\mathscr{C}$ and whose morphisms are given by
    $$\Hom_{\mathscr{C}\otimes_R R'}(M,N)\coloneqq \Hom_{\mathscr{C}}(M,N)\otimes_RR'$$
    for any $M,N\in\mathscr{C}$. There is a natural functor $F\colon\mathscr{C}\arr\mathscr{C}\otimes_RR'$ which is
    the identity on objects and which is the natural base extension on morphisms. For any object
    $M\in \mathscr{C}$, in order to emphasize that $F(M)$ is in $\mathscr{C}\otimes_RR'$, we write $M\otimes_RR'$ for $F(M)$.
    
    If $\mathscr{C}$ is symmetric monoidal then also $\mathscr{C}\otimes_R R'$ is symmetric monoidal in a natural way.

\end{defn}

\begin{lem}\label{localization abelian} Let $R$ be a ring. 
    Let $\mathscr{C}$ be an $R$-linear abelian category, and let $S$ be a multiplicative subset of $R$. Then $\mathscr{C}\otimes_R S^{-1}R$ is also abelian and the natural functor
    $F\colon \mathscr{C}\arr \mathscr{C}\otimes_RS^{-1}R$ is exact. Moreover if $\mathscr{C}\arr\mathscr{D}$ is an $R$-linear exact functor to an $S^{-1}R$-linear category, then the induced functor $\mathscr{C}\otimes S^{-1}R\arr \mathscr{D}$ is also exact.
    
    If $\mathscr{C}$ is symmetric monoidal (with internal
    homomorphisms) then $F\colon \mathscr{C}\arr
    \mathscr{C}\otimes_R S^{-1}R$ is a tensor functor (and preserves internal homomorphisms).
\end{lem}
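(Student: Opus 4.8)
The plan is to argue directly from the description $\Hom_{\mathscr{C}\otimes_R S^{-1}R}(M,N)=S^{-1}\Hom_{\mathscr{C}}(M,N)$ furnished by the definition above, using two elementary observations throughout. First, for $s\in S$ the morphism $\id_M/s$ is an automorphism of the object $M$ in $\mathscr{C}\otimes_R S^{-1}R$, with inverse $F(s\,\id_M)$, and every morphism $\phi/s\colon M\to N$ equals $F(\phi)\circ(\id_M/s)=(\id_N/s)\circ F(\phi)$; hence, up to composition with an isomorphism at the source or at the target, every morphism of $\mathscr{C}\otimes_R S^{-1}R$ has the form $F(\phi)$ with $\phi$ a morphism of $\mathscr{C}$. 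Second, $\phi/s=0$ in $S^{-1}\Hom_{\mathscr{C}}(M,N)$ if and only if $u\phi=0$ in $\Hom_{\mathscr{C}}(M,N)$ for some $u\in S$; in particular $M$ is a zero object of $\mathscr{C}\otimes_R S^{-1}R$ precisely when some $u\in S$ kills $\id_M$. Note also that $\mathscr{C}\otimes_R S^{-1}R$ is additive: $F$ is additive and carries the zero object and the biproducts of $\mathscr{C}$ to objects satisfying the same defining relations.

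I would first show that $F$ preserves kernels and cokernels. Given $\phi\colon M\to N$ in $\mathscr{C}$ with kernel $\iota\colon K\hookrightarrow M$, I claim $F(\iota)$ is a kernel of $F(\phi)$. If $g=\psi/s\colon T\to FM$ satisfies $F(\phi)\circ g=0$, then $u\phi\psi=0$ for some $u\in S$, so $u\psi$ factors through $K$ as $\iota\circ h$ in $\mathscr{C}$, whence $g=(\iota h)/(us)=F(\iota)\circ\bigl(h/(us)\bigr)$; uniqueness of such a factorization follows from $\iota$ being monic in $\mathscr{C}$ together with the vanishing criterion above. The cokernel statement is dual (apply this to $\mathscr{C}^{\op}$, which is again $R$-linear abelian). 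Since every object of $\mathscr{C}\otimes_R S^{-1}R$ lies in the image of $F$ and every morphism is of the form $F(\phi)$ up to an isomorphism at source or target, it follows that $\mathscr{C}\otimes_R S^{-1}R$ has all kernels and cokernels, and that $F$ takes a kernel (resp.\ cokernel) in $\mathscr{C}$ to one in $\mathscr{C}\otimes_R S^{-1}R$.

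The heart of the argument is to show that every monomorphism of $\mathscr{C}\otimes_R S^{-1}R$ is a kernel. Let $g$ be one; we may assume $g=F(\phi)$ for $\phi\colon M\to N$ in $\mathscr{C}$. Take the epi--mono factorization $\phi=j\circ e$ in $\mathscr{C}$, with $e\colon M\to\im\phi$ epic, $j\colon\im\phi\hookrightarrow N$ monic, and $K:=\ker e=\ker\phi$. Then $F(j)$ is monic, so from $g=F(j)\circ F(e)$ monic we conclude $F(e)$ is monic; hence $\ker F(e)=F(K)$ is a zero object, so some $u\in S$ kills $\id_K$. Consequently $u\,\id_M$ composed with $K\hookrightarrow M$ vanishes, so $u\,\id_M$ factors as $\tau\circ e$ through the cokernel $e$ of $K\hookrightarrow M$, with $\tau\colon\im\phi\to M$; cancelling the epimorphism $e$ from $e\tau e=(u\,\id_{\im\phi})\,e$ gives $e\tau=u\,\id_{\im\phi}$, so $F(\tau)/u$ is a two-sided inverse of $F(e)$ and $F(e)$ is an isomorphism. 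Therefore $g=F(\phi)$ and $F(j)$ define the same subobject of $FN$; and $F(j)$ is a kernel of $F(\pi)$, where $\pi\colon N\to N/\im\phi$, because $j=\ker_{\mathscr{C}}\pi$ and $F$ preserves kernels. Thus $g$ is a kernel. Applying the same reasoning to $\mathscr{C}^{\op}$ shows every epimorphism of $\mathscr{C}\otimes_R S^{-1}R$ is a cokernel; being additive with kernels and cokernels in which monos are kernels and epis are cokernels, $\mathscr{C}\otimes_R S^{-1}R$ is abelian, and $F$, preserving kernels and cokernels, preserves short exact sequences, hence is exact. I expect the delicate point to be exactly the step showing $F(e)$ is an isomorphism: a priori it is only monic and epic, which does not force invertibility in an additive category not yet known to be abelian -- the factorization $u\,\id_M=\tau\circ e$, available because the "torsion" kernel $K$ is killed by $u\in S$, is what makes it work.

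For the remaining assertions, recall that $\mathscr{C}\otimes_R S^{-1}R$ has the universal property (immediate from its construction) that composing with $F$ identifies the $S^{-1}R$-linear functors $\mathscr{C}\otimes_R S^{-1}R\to\mathscr{D}$ with the $R$-linear functors $\mathscr{C}\to\mathscr{D}$, for any $S^{-1}R$-linear category $\mathscr{D}$. Thus an $R$-linear exact functor $G\colon\mathscr{C}\to\mathscr{D}$ induces $\bar G\colon\mathscr{C}\otimes_R S^{-1}R\to\mathscr{D}$ with $\bar G\circ F=G$; by the crux step, every short exact sequence of $\mathscr{C}\otimes_R S^{-1}R$ is isomorphic to $F$ applied to a short exact sequence $0\to\im\phi\to N\to N/\im\phi\to 0$ of $\mathscr{C}$, and applying $\bar G$ returns $G$ of that sequence, which is exact; hence $\bar G$ is exact. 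Finally, when $\mathscr{C}$ is symmetric monoidal, the tensor product of $\mathscr{C}\otimes_R S^{-1}R$ is by construction the one of $\mathscr{C}$ on objects and the $S^{-1}R$-bilinear extension on morphisms, so $F$ is strictly monoidal; and if $\mathscr{C}$ has internal homs, applying $-\otimes_R S^{-1}R$ to the defining adjunction isomorphisms shows that the same objects $\Homsh_{\mathscr{C}}(M,N)$ serve as internal homs in $\mathscr{C}\otimes_R S^{-1}R$, so $F$ preserves internal homs.
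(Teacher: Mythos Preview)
Your proof is correct and follows essentially the same strategy as the paper: reduce to morphisms of the form $F(\phi)$ and verify that $F$ carries kernels and cokernels in $\mathscr{C}$ to kernels and cokernels in $\mathscr{C}\otimes_R S^{-1}R$. The execution differs in two places worth noting. First, where you verify that $F$ preserves kernels by hand (factoring $u\psi$ through $\ker\phi$), the paper does this in one stroke via representability and flatness of $R\to S^{-1}R$: since $\Ker(f)$ represents $T\mapsto \Ker(\Hom_{\mathscr{C}}(T,A)\to\Hom_{\mathscr{C}}(T,B))$ and localization is exact on $R$-modules, $F(\Ker(f))$ automatically represents the kernel after localizing the Hom-sets. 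Second, you spell out the abelian axiom (every mono is a kernel) via your torsion-killing factorization $u\,\id_M=\tau\circ e$, whereas the paper leaves the canonical isomorphism $\mathrm{coim}\to\mathrm{im}$ implicit, relying on the fact that $F$ preserves kernels and cokernels so this map for $F(\phi)$ is just $F$ applied to the corresponding isomorphism in $\mathscr{C}$. For the exactness of the induced functor $\bar G$, both arguments lift to $\mathscr{C}$ and invoke exactness of $G$; you phrase it for short exact sequences, the paper for bounded exact complexes. Your treatment is more explicit; the paper's flatness argument is more conceptual and would extend verbatim to $\mathscr{C}\otimes_R R'$ for any flat $R$-algebra $R'$.
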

\begin{proof}
Set $R'=S^{-1}R$.
    Since up to isomorphisms every morphism in $\mathscr{C}\otimes_RR'$ comes from $\mathscr{C}$, in order to show the exactness of $F$ it is enough to show that $F$ preserves kernel and cokernel. Let's look at kernel for example. Let $f\colon A\to B$ be a morphism in $\mathscr{C}$. Then $\Ker(f)$ is the 
    object in $\mathscr{C}$ which represents the functor that sends any $T\in \mathscr{C}$ to 
    $$\Ker(\Hom_\mathscr{C}(T,A)\arr\Hom_\mathscr{C}(T,B)).$$ By the flatness of $R\to R'$ we have the exact sequence
              \[
  \begin{tikzpicture}[xscale=4.6,yscale=-1]
    \node (A0_0) at (0, 0) {$0$};
    \node (A0_1) at (0.8, 0) {$\Hom_\mathscr{C}(T,\Ker(f))\otimes R'$};
    \node (A0_2) at (2, 0) {$\Hom_\mathscr{C}(T,A)\otimes R'$};
    \node (A0_3) at (3, 0) {$\Hom_\mathscr{C}(T,B)\otimes R'$};
    \node (A1_0) at (0, 1) {$0$};
    \node (A1_1) at (0.8, 1) {$\Hom(T\otimes R',\Ker(f)\otimes R')$};
    \node (A1_2) at (2, 1) {$\Hom(T\otimes R',A\otimes R')$};
    \node (A1_3) at (3, 1) {$\Hom(T\otimes R',B\otimes R')$};
    \path (A0_1) edge [-,double distance=1.5pt]node [auto] {$\scriptstyle{}$} (A1_1);
    \path (A0_0) edge [->]node [auto] {$\scriptstyle{}$} (A0_1);
    \path (A0_1) edge [->]node [auto] {$\scriptstyle{}$} (A0_2);
    \path (A1_0) edge [->]node [auto] {$\scriptstyle{}$} (A1_1);
    \path (A0_3) edge [-,double distance=1.5pt]node [auto] {$\scriptstyle{}$} (A1_3);
    \path (A1_1) edge [->]node [auto] {$\scriptstyle{}$} (A1_2);
    \path (A0_2) edge [-,double distance=1.5pt]node [auto] {$\scriptstyle{}$} (A1_2);
    \path (A1_2) edge [->]node [auto] {$\scriptstyle{}$} (A1_3);
    \path (A0_2) edge [->]node [auto] {$\scriptstyle{}$} (A0_3);
  \end{tikzpicture}
  \]
for each $T\otimes_RR'\in\sC\otimes_RR'$. Thus $\Ker(f)\otimes_RR'$ represents the kernel of $f\otimes_RR'$.

Now consider an exact linear functor $G\colon\mathscr{C}\arr
\mathscr{D}$ as in the statement and call $G'\colon
\mathscr{C}\otimes R'\arr\mathscr{D}$ the induced functor. Let
$A'_{\bullet}$ be a bounded exact complex in $\mathscr{C}\otimes
R'$. In order to show that $G'(A'_{\bullet})$ is exact we can
multiply each degree map by elements of $S$. In particular we
can assume that all those maps are defined in $\mathscr{C}$ and,
multiplying again by elements of $S$, that they define a complex $A_{\bullet}$ such that $F(A_{\bullet})=A'_{\bullet}$. Using the exactness of $F$ and $G$ we have
\[
F(\Hl^i(A_{\bullet}))\simeq \Hl^i(A'_{\bullet})=0\then
0=G'F(\Hl^i(A_{\bullet}))=G(\Hl^i(A_{\bullet}))\simeq
\Hl^i(G(A_{\bullet}))\simeq \Hl^i(G'(A'_{\bullet})).
\]

The last statement follows from a direct check.
 \end{proof}
 
 \begin{rmk}\label{localize a differential module}
  Let $(A,\Delta_A)$ be a differential ring and $S$ be a multiplicative subset of $A$. Then $(S^{-1}A,S^{-1}\Delta_A)$ has a natural structure of differential ring. Moreover if $(M,\nabla)$ is a differential $A$-module then $S^{-1}M$ is a differential $S^{-1}A$-module in a natural way.
  
  The condition (\ref{eq:differential ring condition}) in Definition \ref{differential modules} forces the definition of the bracket in $S^{-1}\Delta_A$ as well as in $\Der(S^{-1}A)=\Hom(S^{-1}\Omega_{A/\mathbb{Z}},S^{-1}A)$. 
  
Also, the Leibniz rule (\ref{eq:Leibniz rule})  in Definition \ref{differential modules} forces the definition of the map $\nabla\colon S^{-1}\Delta_A\to \End(S^{-1}M)$: this is the unique $S^{-1}A$-linear map such that
  \[
  D(m/s)=D(m)/s - D(s)m/s^2.
  \]
  Indeed everything is well-defined (\cite[Rem.~1.1.5]{Ked12}).
 \end{rmk}

 \begin{lem}\label{localization}
 Let $R$ be a ring and let $(A,\Delta_A)$ be a differential $R$-algebra such that $\Delta_A$ is a finitely generated $A$-module and let $S$ be a multiplicative subset of $R$. Then $\Diff(A)$ is an $R$-linear category and the functor
   \[
  \Diff(A)\otimes_R S^{-1}R \arr \Diff(S^{-1}A)
  \]
  is a fully faithful tensor functor.
  If $A$ is Noetherian then the above functor is also exact and preserves internal homomorphisms.
 \end{lem}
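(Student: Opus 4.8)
The plan is to realise the functor of the statement as the one induced by the localisation functor $\lambda\colon\Diff(A)\arr\Diff(S^{-1}A)$, $(M,\nabla)\mapsto(S^{-1}M,S^{-1}\nabla)$, where $S^{-1}\nabla$ is as in the Remark preceding the statement. That $\Diff(A)$ is $R$-linear is immediate: $A$ is an $R$-algebra, so $\Hom_{\Diff(A)}(M,N)$, being an $A$-submodule of $\Hom_A(M,N)$, is an $R$-module, and composition is $R$-bilinear. Since localisation of modules preserves finite presentation, $\lambda$ does land in $\Diff(S^{-1}A)$; it is visibly $R$-linear with $S^{-1}R$-linear target, so by the universal property of $\Diff(A)\otimes_RS^{-1}R$ it factors through an $S^{-1}R$-linear functor $\Diff(A)\otimes_RS^{-1}R\arr\Diff(S^{-1}A)$, which I take to be the functor in the statement.

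The heart of the proof is full faithfulness, i.e.\ the bijectivity of the natural map
\[
\Hom_{\Diff(A)}(M,N)\otimes_RS^{-1}R\arr\Hom_{\Diff(S^{-1}A)}(S^{-1}M,S^{-1}N).
\]
First I would fix generators $D_1,\dots,D_r$ of the $A$-module $\Delta_A$. Because $\nabla$ is $A$-linear in the derivation variable, a map $\phi\in\Hom_A(M,N)$ lies in $\Hom_{\Diff(A)}(M,N)$ if and only if $\phi\circ D_i=D_i\circ\phi$ for $i=1,\dots,r$. Using the Leibniz rule one checks that each commutator $\delta_i(\phi)\coloneqq\phi\circ D_i-D_i\circ\phi$ is again $A$-linear, and --- this is the one place where the hypothesis that $(A,\Delta_A)$ is a \emph{differential $R$-algebra} (rather than merely a differential ring) is used --- because the $D_i$ annihilate the image of $R$ in $A$, the resulting map $\delta=(\delta_1,\dots,\delta_r)\colon\Hom_A(M,N)\arr\Hom_A(M,N)^{\oplus r}$ is $R$-linear, with $\ker\delta=\Hom_{\Diff(A)}(M,N)$. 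Since $R\arr S^{-1}R$ is flat, $-\otimes_RS^{-1}R$ commutes with this kernel; moreover $\Hom_A(M,N)\otimes_RS^{-1}R=S^{-1}\Hom_A(M,N)\cong\Hom_{S^{-1}A}(S^{-1}M,S^{-1}N)$ because $M$ is finitely presented over $A$. Under this identification $\delta\otimes_RS^{-1}R$ becomes $\psi\mapsto(\psi\circ\widetilde D_i-\widetilde D_i\circ\psi)_i$, with $\widetilde D_i$ the image of $D_i$ in $S^{-1}\Delta_A$, and since $S^{-1}\Delta_A$ is generated over $S^{-1}A$ by $\widetilde D_1,\dots,\widetilde D_r$, its kernel is precisely $\Hom_{\Diff(S^{-1}A)}(S^{-1}M,S^{-1}N)$. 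I expect the only non-formal input to be the standard isomorphism $S^{-1}\Hom_A(M,N)\cong\Hom_{S^{-1}A}(S^{-1}M,S^{-1}N)$ for finitely presented $M$; the main obstacle is the bookkeeping of verifying that the isomorphism so obtained is indeed the natural map displayed above.

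The remaining claims are formal. Monoidality and preservation of the unit reduce to the canonical identification $S^{-1}(M\otimes_AN)\cong S^{-1}M\otimes_{S^{-1}A}S^{-1}N$ together with the trivial fact that it matches the two tensor-product connections. If $A$ is Noetherian, then so is $S^{-1}A$, hence $\Diff(A)$ and $\Diff(S^{-1}A)$ are abelian by Proposition~\ref{first prop of differential modules}, and $\lambda$ is an exact $R$-linear functor into an $S^{-1}R$-linear category; exactness of the induced functor then follows from the last assertion of Lemma~\ref{localization abelian}. Finally, when $A$ is Noetherian and $M,N$ are finitely generated, $\Hom_A(M,N)$ is finitely presented, so $\Homsh_A(M,N)\in\Diff(A)$, and the same isomorphism $S^{-1}\Hom_A(M,N)\cong\Hom_{S^{-1}A}(S^{-1}M,S^{-1}N)$ is readily seen to carry the localised internal-hom connection to the internal-hom connection of $\Homsh_{S^{-1}A}(S^{-1}M,S^{-1}N)$, which gives preservation of internal homomorphisms.
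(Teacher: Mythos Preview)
Your argument is correct and follows the same overall strategy as the paper (reduce to the classical isomorphism $S^{-1}\Hom_A(M,N)\cong\Hom_{S^{-1}A}(S^{-1}M,S^{-1}N)$ for $M$ finitely presented, then use finite generation of $\Delta_A$), but you package the full faithfulness step more cleanly. The paper argues by an element chase: given a compatible $f\in\Hom_{A'}(M',N')$, clear denominators to lift it to $\Hom_A(M,N)$, then use that the obstruction $g(D,m)=f(D(m))-D(f(m))$ vanishes after localisation and is a linear combination of finitely many values to find a single $s\in S$ with $sg\equiv 0$. You instead exhibit $\Hom_{\Diff(A)}(M,N)$ as the kernel of the $R$-linear map $\delta\colon\Hom_A(M,N)\arr\Hom_A(M,N)^{\oplus r}$, apply flatness of $R\arr S^{-1}R$, and identify the localised kernel. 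This avoids the explicit denominator-clearing and makes injectivity and surjectivity come out simultaneously.

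One small correction: you write that $\Hom_{\Diff(A)}(M,N)$ is ``an $A$-submodule of $\Hom_A(M,N)$''. This is false in general: if $\phi$ commutes with $D$ then $(a\phi)\circ D-D\circ(a\phi)=-D(a)\phi$, which need not vanish. It is only an $R$-submodule, precisely because the $D_i$ annihilate the image of $R$; you correctly invoke this later when checking that $\delta$ is $R$-linear, so the slip is harmless for the argument but should be fixed.
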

\begin{proof}
Set $R'=S^{-1}R$ and $A'=S^{-1}A$.
The fact that the functor is a tensor functor follows from construction. For the full faithfulness, given two differential $A$-modules $(M,\nabla_M)$ and $(N,\nabla_N)$ we want to show that the natural map 
   \[
    \phi\colon \Hom_{\Diff(A)}(M,N)\otimes_RR'\longrightarrow
\Hom_{\Diff(A')}(M',N')\]
    is an isomorphism, where $M'\coloneqq M\otimes_RR'$ and
    $N'\coloneqq N\otimes_RR'$ are thought of as differential $A'$-modules.  The canonical map 
    $$\Hom_A(M,N)\otimes_RR'\longrightarrow\Hom_{A'}(M',N')$$
    is an isomorphism. Thus we have to show that
     if $f\in\Hom_{A'}(M',N')$ is a morphism which is compatible with the $\nabla_{*'}$, then it comes from
     \[\Hom_{\Diff(A)}(M,N)\otimes_RR'\subseteq
     \Hom_{A}(M,N)\otimes_RR'=\Hom_{A'}(M',N').\]
     Replacing $f$ by $sf$ for some $s\in S$ we may assume that $f$ comes from $\Hom_A(M,N)$ and we will still use $f$ to denote the lift of $f$ in $\Hom_A(M,N)$. 
     We must show that there exists $s\in S$ such that $sf$ preserves the $\nabla_*$. For $D\in \Delta_A$ and $m\in M$ set 
     \[
     g(D,m)=f(D(m))-D(f(m))\in N.
     \]
     Since $\nabla_N(D)$ is $R$-linear we look for an $s\in S$ such that $sg(D,m)=0$ for all $D$ and $m$. By hypothesis $g(D,m)=0$ in $N'=S^{-1}N$. Thus it is enough to notice that, by the Leibniz rule, $g(D,m)$ is a linear combination of the values of $g$ on generators of $\Delta_A$ and $M$, which are finitely many.
     
     Now assume that $A$ is Noetherian. Then the functor in the statement
     preserves internal homomorphisms because of how they are constructed and
     because all modules considered are finitely generated. The exactness
     follows from Lemma \ref{localization abelian}.
\end{proof}

\begin{defn}\cite[Def.~1.2.1]{Ked12}
 A differential ring $(A,\Delta_A)$ is called locally simple if for all prime ideals $P$ the differential local ring $A_P$ is simple, \emph{i.e.} $A_P$ contains no proper non zero ideals stable under the action of $(\Delta_A)_P$.  
\end{defn}

\begin{prop}\textup{\cite[Prop.~1.2.6]{Ked12}}\label{locally simple implies locally free}
 Let $A$ be a locally simple differential ring. If $(E,\nabla)$ is a differential $A$-module of finite presentation then $E$ is locally free as an $A$-module.
\end{prop}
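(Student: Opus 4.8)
The plan is to reduce to the local case and detect local freeness via Fitting ideals. Recall two standard facts: a finitely presented $A$-module $E$ is finite locally free if (and only if) $E_P$ is free over $A_P$ for every prime $P$ of $A$; and, over a local ring, a finitely presented module is free of rank $r$ exactly when its $(r-1)$-st Fitting ideal vanishes and its $r$-th Fitting ideal is the whole ring (Northcott's criterion). Since Fitting ideals commute with localization, $\mathrm{Fitt}_i(E)_P=\mathrm{Fitt}_i(E_P)$, and since by hypothesis $A_P$ is a simple differential ring for $(\Delta_A)_P$, the whole statement follows once we show that every Fitting ideal $\mathrm{Fitt}_i(E)$ is stable under $\Delta_A$: then each $\mathrm{Fitt}_i(E)_P$ is a $(\Delta_A)_P$-stable ideal of the simple ring $A_P$, hence equals $0$ or $A_P$, and letting $r$ be the least index with $\mathrm{Fitt}_r(E)_P=A_P$ yields $E_P$ free of rank $r$ over $A_P$. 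As this holds at every prime and $E$ is finitely presented, $E$ is finite locally free.

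So the crux is to prove that \emph{the Fitting ideals of a finitely presented differential $A$-module are $\Delta_A$-stable}. Fix a presentation $A^m\xrightarrow{M}A^n\xrightarrow{\pi}E\to0$ and set $R=\ker\pi$, so the columns of $M$ generate $R$. For $D\in\Delta_A$ choose $c_{ij}\in A$ with $\nabla_D(\pi(\epsilon_i))=\sum_j c_{ij}\pi(\epsilon_j)$ on the standard basis, and define $\hat D\colon A^n\to A^n$ by $\hat D(\sum_i a_i\epsilon_i)=\sum_i D(a_i)\epsilon_i+\sum_{i,j}a_ic_{ij}\epsilon_j$. Then $\hat D$ satisfies the Leibniz rule over $(A,D)$ and $\pi\circ\hat D=\nabla_D\circ\pi$ by construction, so $\hat D(R)\subseteq R$. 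Let $\hat D^{\wedge}$ be the $D$-derivation of $\bigwedge^{\bullet}A^n$ extending $\hat D$; it preserves the image of $\bigwedge^{n-i}R\to\bigwedge^{n-i}A^n$. The contraction pairing $\bigwedge^{n-i}A^n\otimes\bigwedge^{n-i}(A^n)^{\vee}\to A$ satisfies a Leibniz identity with respect to $\hat D^{\wedge}$, its dual on $(A^n)^{\vee}$, and $D$; since $\mathrm{Fitt}_i(E)$ is the ideal generated by the $(n-i)$-minors of $M$, i.e. by the images of $\omega\otimes\xi$ with $\omega\in\bigwedge^{n-i}R$ and $\xi\in\bigwedge^{n-i}(A^n)^{\vee}$, we get $D(\mathrm{Fitt}_i(E))\subseteq\mathrm{Fitt}_i(E)$: the two terms of the Leibniz identity both land in $\mathrm{Fitt}_i(E)$ because $\hat D^{\wedge}\omega$ stays in the image of $\bigwedge^{n-i}R$. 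As $\Delta_A$ acts by $A$-linear combinations of the operators $D$, this shows $\mathrm{Fitt}_i(E)$ is $\Delta_A$-stable.

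The main obstacle is this middle step. The delicate point is that $\nabla_D$ need not lift to an honest connection on the free module $A^n$ appearing in the presentation --- the compatibility with Lie brackets may fail for any choice of the $c_{ij}$ --- but for a \emph{fixed} $D$ one does get a single $D$-derivation $\hat D$ of $A^n$ preserving the syzygy submodule $R$, and that is exactly what is needed to run the exterior-power and minor computation. Everything else (localization, simplicity, Northcott's criterion, and the passage from ``free at every prime'' to ``locally free'' for a finitely presented module) is routine.
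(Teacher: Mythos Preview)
Your argument is correct and is essentially the proof given in the cited reference \cite[Prop.~1.2.6]{Ked12}: the paper itself does not reprove the statement but defers to Kedlaya, whose argument proceeds exactly by showing that the Fitting ideals of a finitely presented differential module are stable under the action of $\Delta_A$ and then invoking local simplicity. Your treatment of the key step---lifting a single $D\in\Delta_A$ to a $D$-derivation $\hat D$ on the free module that preserves the syzygy submodule, without demanding a full differential-module lift---is precisely the right observation, and the remainder (compatibility of Fitting ideals with localization, the Fitting criterion for freeness over a local ring) is standard.
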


\begin{thm}\label{locally simple implies Tannakian}
 Let $(A,\Delta_A)$ be a Noetherian locally simple differential ring such that $\Spec(A)$ is connected. Then $\Diff(A)$ is a Tannakian category over some subfield $L\subseteq A$. 
 Let $k$ be a field, let $A$ be differential $k$-algebra and $x\colon \Spec k\arr \Spec A$ be a rational point, then $\Diff(A)$ with the fiber functor obtained via $x^*$ is a neutral Tannakian category.
\end{thm}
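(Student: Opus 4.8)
The plan is to verify that $\Diff(A)$ is an essentially small, rigid, abelian, symmetric monoidal $L$-linear category with $\End(\mathbf 1)=L$ a field, and then to exhibit a fibre functor over an extension of $L$; the general Tannakian reconstruction theorem (Deligne) then gives the result. The two inputs that do the real work are Proposition~\ref{first prop of differential modules} (abelianness, tensor structure, internal $\Hom$) and Proposition~\ref{locally simple implies locally free} (every object of $\Diff(A)$ is locally free of finite rank as an $A$-module). Essential smallness is clear since $A$ is Noetherian.

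First I would establish \emph{rigidity}. Since every $E\in\Diff(A)$ is locally free of finite rank over $A$, its internal $\Hom$ $E^{\vee}=\Hom_A(E,A)$ with the induced connection again lies in $\Diff(A)$, the evaluation and coevaluation maps are morphisms of differential modules satisfying the triangle identities, and the biduality map $E\arr E^{\vee\vee}$ is an isomorphism; combined with Proposition~\ref{first prop of differential modules} this makes $\Diff(A)$ rigid. Next, an endomorphism of the unit $\mathbf 1=(A,\nabla)$ is multiplication by an $a\in A$ with $D(a)=0$ for all $D\in\Delta_A$, so $\End_{\Diff(A)}(\mathbf 1)=A^{\Delta_A}=\Hl^0(A)=:L$ and $\Diff(A)$ is $L$-linear. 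I claim $L$ is a field. If $0\ne c\in L$ then the ideal $(c)$ is $\Delta_A$-stable, because $D(ac)=D(a)c$; hence for every prime $P$ the ideal $(c)A_P$ is stable under the localized differential structure on $A_P$ (the localization of differential rings, as after Lemma~\ref{localization abelian}), so by local simplicity it is $0$ or $A_P$. The loci $\{P:(c)A_P=0\}=\Spec A\setminus V(\Ann_A(c))$ and $\{P:c\notin P\}=D(c)$ are then open, disjoint, and cover $\Spec A$; since $\Spec A$ is connected and $c\ne 0$ we conclude $D(c)=\Spec A$, i.e. $c$ is a unit, and $c^{-1}\in L$ too. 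Thus $\End_{\Diff(A)}(\mathbf 1)=L$ is a field.

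For the \emph{fibre functor}, pick a closed point $\mathfrak m\subseteq A$ with residue field $\kappa$; the map $A\arr\kappa$ induces a field extension $L\hookrightarrow\kappa$. The functor $\omega=(-)\otimes_A\kappa\colon\Diff(A)\arr\mathrm{Vect}_\kappa$ is $L$-linear and symmetric monoidal by construction; it is exact because a short exact sequence in $\Diff(A)$ is short exact of $A$-modules with locally free (hence projective) terms, so it is $A$-split; and it is faithful because each object is locally free of rank constant on the connected scheme $\Spec A$, so $\omega(E)=0$ forces $E=0$, and a morphism $f$ with $\omega(f)=0$ has image of rank $0$, hence $f=0$. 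By the Tannakian reconstruction theorem, $\Diff(A)$ is therefore Tannakian over $L=\Hl^0(A)$. For the final assertion: if $A$ is a differential $k$-algebra then $k\subseteq A^{\Delta_A}=L$ (every $D\in\Delta_A$ is $k$-linear), and a $k$-point $x\colon A\arr k$ restricts to a $k$-algebra retraction $L\arr k$; since $L$ is a field this forces $L=k$. Taking $\mathfrak m=\ker x$, so $\kappa=k$, the functor $x^{*}=(-)\otimes_A k$ is a $k$-linear exact faithful tensor functor to $\mathrm{Vect}_k$, so $(\Diff(A),x^{*})$ is a neutral Tannakian category over $k$.

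The step I expect to be the main obstacle is showing that $\Hl^0(A)$ is a field: this is the one place where local simplicity and connectedness must be combined carefully, and one has to be precise about the fact that localizing a $\Delta_A$-stable ideal keeps it stable, that the two loci in question are genuinely open, and that their disjoint union exhausts $\Spec A$. Everything else is either formal or an immediate consequence of local freeness; the only other thing to pin down is the citation of a form of Deligne's theorem whose hypotheses — essential smallness, rigidity, $\End(\mathbf 1)$ a field, existence of a fibre functor to vector spaces over an extension field — are exactly those verified above.
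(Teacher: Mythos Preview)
Your proof is correct and follows essentially the same approach as the paper's: use Proposition~\ref{first prop of differential modules} for the abelian/monoidal structure, use Proposition~\ref{locally simple implies locally free} for rigidity, check that $\End(\mathbf 1)$ is a field, and pull back along a point for the fibre functor. Two minor remarks: first, in this paper $\Hl^0(A)$ denotes the global sections of $\Spec A$, i.e.\ $A$ itself, so your $L=A^{\Delta_A}$ is a \emph{subfield} of $\Hl^0(A)=A$ rather than equal to it (the statement only asserts the inclusion). Second, for the field property of $L$ the paper tacitly argues via local freeness of $\Coker(c\cdot)$ and local constancy of its rank, whereas you argue directly from local simplicity of the $A_P$; both routes amount to the same open-closed decomposition of $\Spec A$. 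Your explicit verification that $L=k$ in the neutral case is a useful addition the paper leaves implicit.
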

\begin{proof}
 By Proposition \ref{first prop of differential modules} we see that
 $\Diff(A)$ is an abelian, monoidal and symmetric category with
 internal homomorphisms. By Proposition \ref{locally simple implies locally
 free} it is easy to see that $\Diff(A)$ is also rigid and that
 endomorphisms of the unit are either $0$ or isomorphisms, that
 is $\End_{\Diff(A)}(A)\subseteq A$ is a field. If $A$ is a
 differential $k$-algebra and $x$ a $k$-rational point, then we have that 
\[
k\subseteq \End_{\Diff(A)}(A)\subseteq k.
\]
Therefore $\Diff(A)$, with the fiber functor obtained via $x^*$, is a neutral Tannakian category.

\end{proof}

\subsection{Connections}

We now introduce a natural way of describing differential modules via connections.

\begin{defn}\label{def of connection}
Let $f\colon Y\arr S$ be a map of schemes and consider a surjective map of quasi-coherent sheaves $\Omega_{Y/S}\twoheadrightarrow\Omega$ such that the differential $\Omega_{Y/S}\to \Omega_{Y/S}^2$ induces $d^1\colon \Omega\to \Omega^2\coloneq\Omega\wedge\Omega$.
An $\Omega$-connection on an $\odi Y$-module $M$ is an $f^{-1}\sO_S$-linear map 
$$\nabla\colon \ \  M\arr M\otimes_{\sO_Y}\Omega$$
of sheaves satisfying the Leibniz rule, \emph{i.e.} $\nabla(am)=a\nabla(m)+m\otimes da$ for all sections $a$, $m$ on $\odi Y$, $M$ respectively over some open.

The connection $\nabla$ induces a map, 
\[
\nabla^1: M\otimes_{\sO_Y} \Omega\rightarrow M\otimes_{\sO_Y} \Omega^2
\]
defined by $\nabla^1(m\otimes
\omega)=\nabla(m)\wedge\omega+m\otimes\mathrm{d}\omega$ for all
sections $m$, $\omega$ on $M$, $\Omega$ respectively over some
open, where $\nabla(m)\wedge\omega$ is the image of
$\nabla(m)\otimes\omega$ under the canonical map
$$M\otimes_{\sO_Y}\Omega\otimes_{\sO_Y}\Omega\xrightarrow{\
\id\otimes \wedge\ }
M\otimes_{\sO_Y}\Omega^2.$$  The  map $\nabla^1$ is well-defined
thanks to  \cite[\href{https://stacks.math.columbia.edu/tag/07I0}{Tag
07I0}]{stacks-project}.

The connection $\nabla$ is called integrable if the 
composition $$M\xrightarrow{\nabla}M\otimes_{\sO_Y}\Omega\xrightarrow{\nabla^1}  M\otimes_{\sO_Y}\Omega^2$$ is zero. 

We denote the
category of integrable $\Omega$-connections in finitely presented $\mathcal{O}_Y$-modules by $\Conn(Y/S,\Omega)$.
\end{defn}

\begin{lem}
    Let $f\colon Y\arr S$ be a map of schemes and let $\sO_Y\xrightarrow{d}\Omega_{Y/S}^1\xrightarrow{d^1}\Omega_{Y/S}^2$ be the canonical differentials.
    Suppose that $\phi_1,\phi_2\in\textup{Der}(Y/S)$, and let $\varphi_1,\varphi_2$ be the corresponding  maps in
    $\Hom_{\sO_Y}(\Omega_{Y/S},\sO_Y)$. We denote $\varphi_1\wedge\varphi_2$ the map $\Omega_{Y/S}^2\arr \sO_Y$ sending $dx\wedge dy$ to $\phi_1(x)\phi_2(y)-\phi_2(x)\phi_1(y)$. Then $[\phi_1,\phi_2]$ corresponds to
    the homomorphism 
    $$\varphi_1\circ d\circ \varphi_2-\varphi_2\circ
    d\circ\varphi_1-(\varphi_1\wedge\varphi_2)\circ
    d^1\in\Hom_{\sO_Y}(\Omega_{Y/S},\sO_Y).$$
\end{lem}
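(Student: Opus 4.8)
The plan is to verify the claimed identity by a local computation, reducing to the case where $Y/S$ is such that $\Omega^1_{Y/S}$ is free, and even to the case of a polynomial algebra, using that both sides are $\sO_Y$-linear in an appropriate sense and compatible with localisation. First I would fix notation: write $\phi = [\phi_1,\phi_2] = \phi_1\circ\phi_2 - \phi_2\circ\phi_1 \in \textup{Der}(Y/S)$, and let $\varphi \in \Hom_{\sO_Y}(\Omega^1_{Y/S},\sO_Y)$ be the corresponding homomorphism, so $\varphi(dx) = \phi(x) = \phi_1(\phi_2(x)) - \phi_2(\phi_1(x))$ for a local section $x$ of $\sO_Y$. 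The assertion is then that $\varphi = \psi$, where $\psi \coloneqq \varphi_1\circ d\circ\varphi_2 - \varphi_2\circ d\circ\varphi_1 - (\varphi_1\wedge\varphi_2)\circ d^1$. Since a homomorphism out of $\Omega^1_{Y/S}$ is determined by its values on exact forms $dx$ (these generate $\Omega^1_{Y/S}$ as an $\sO_Y$-module), and since everything in sight is local on $Y$ and on $S$, it suffices to check $\varphi(dx) = \psi(dx)$ for every local section $x$ of $\sO_Y$.

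Next I would unwind the right-hand side on $dx$. We have $\varphi_2(dx) = \phi_2(x) \in \sO_Y$, hence $(d\circ\varphi_2)(dx) = d(\phi_2(x))$ and $(\varphi_1\circ d\circ\varphi_2)(dx) = \varphi_1(d(\phi_2(x))) = \phi_1(\phi_2(x))$; symmetrically $(\varphi_2\circ d\circ\varphi_1)(dx) = \phi_2(\phi_1(x))$. For the third term, $d^1(dx) = 0$ in $\Omega^2_{Y/S}$ because $d^1\circ d = 0$, so $(\varphi_1\wedge\varphi_2)(d^1(dx)) = 0$. Therefore $\psi(dx) = \phi_1(\phi_2(x)) - \phi_2(\phi_1(x)) = \phi(x) = \varphi(dx)$, which is exactly what we want. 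The appearance of $d^1\colon \Omega^1_{Y/S}\to\Omega^2_{Y/S}$ and the term $(\varphi_1\wedge\varphi_2)\circ d^1$ only matters for the well-definedness of $\psi$ as a map on $\Omega^1_{Y/S}$ (and for the analogous statement with a general quotient $\Omega$ in place of $\Omega^1_{Y/S}$), not for its value on exact forms.

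The one genuine point that needs care — the main obstacle, though a mild one — is to confirm that $\psi$ is a well-defined $\sO_Y$-linear map $\Omega^1_{Y/S}\to\sO_Y$, i.e. that the $f^{-1}\sO_S$-linear composite $\varphi_1\circ d\circ\varphi_2 - \varphi_2\circ d\circ\varphi_1$, which is a priori only additive and $f^{-1}\sO_S$-linear on $\Omega^1_{Y/S}$, becomes $\sO_Y$-linear after subtracting $(\varphi_1\wedge\varphi_2)\circ d^1$. This is a formal check using the Leibniz rule: for a local section $a$ of $\sO_Y$ and a local section $\omega$ of $\Omega^1_{Y/S}$, one computes $(\varphi_1\circ d\circ\varphi_2)(a\omega) - a\,(\varphi_1\circ d\circ\varphi_2)(\omega)$ using $d(a\,\varphi_2(\omega)) = a\,d(\varphi_2(\omega)) + \varphi_2(\omega)\,da$, and likewise with the indices swapped; the non-$\sO_Y$-linear remainders are $\varphi_2(\omega)\,\varphi_1(da) - \varphi_1(\omega)\,\varphi_2(da) = \varphi_2(\omega)\,\phi_1(a) - \varphi_1(\omega)\,\phi_2(a)$, and this is precisely cancelled by $-[(\varphi_1\wedge\varphi_2)\circ d^1](a\omega)$ since, writing $\omega = \sum b_i\,dy_i$ locally, $d^1(a\omega) = da\wedge\omega + a\,d^1(\omega)$ and $(\varphi_1\wedge\varphi_2)(da\wedge dy_i) = \phi_1(a)\phi_2(y_i) - \phi_2(a)\phi_1(y_i)$. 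Granting this linearity, the identity $\varphi = \psi$ follows from the computation on exact forms above, since both are $\sO_Y$-linear and agree on a generating set. $\square$
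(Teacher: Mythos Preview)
Your proof is correct and follows exactly the same approach as the paper: first check $\psi\circ d=[\phi_1,\phi_2]$ on exact forms (using $d^1\circ d=0$), then verify that $\psi$ is $\sO_Y$-linear. The paper in fact omits this linearity check as ``a direct computation which we omit'', so you have supplied more detail than the original. (Your opening remarks about reducing to the free or polynomial case are unnecessary, as your argument works directly without them.)
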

\begin{proof}
If $\psi\colon \Omega_{Y/S}\arr \odi Y$ is the map in the statement, it clearly satisfies $\psi \circ d = [\phi_1,\phi_2]$. Thus one has to check that $\psi$ is $\odi Y$ linear. This is a direct computation which we omit.
\end{proof}

\begin{cor} \label{sub-Lie algebra}
 Let $f\colon Y\arr S$ be a map of schemes and $\Omega_{Y/S}\arr \Omega$ a
 quotient as in Definition \ref{def of connection}. Then the subsheaf 
 $\sHom_Y(\Omega,\odi Y)\subseteq \der(Y/S)$ 
 is a subsheaf of Lie algebras. 
\end{cor}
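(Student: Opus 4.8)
The plan is to reduce to sections over an affine open and then feed the explicit bracket formula of the preceding lemma into an elementary diagram chase. Since ``being a subsheaf of Lie algebras'' is a local property, and since the inclusion $\sHom_Y(\Omega,\odi Y)\hookrightarrow \der(Y/S)$ is a monomorphism of sheaves (because $\pi\colon\Omega_{Y/S}^1\twoheadrightarrow\Omega$ is an epimorphism, so a factorization through $\pi$, when it exists, is unique), it suffices to prove: if $\phi_1,\phi_2\in\der(Y/S)$ are local sections whose associated $\odi Y$-linear maps $\varphi_1,\varphi_2\colon\Omega_{Y/S}^1\to\odi Y$ factor through $\pi$, say $\varphi_i=\bar\varphi_i\circ\pi$, then the map $\psi\colon\Omega_{Y/S}^1\to\odi Y$ corresponding to $[\phi_1,\phi_2]$ factors through $\pi$ as well. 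Working over an affine open of $Y$, the previous lemma applies and gives
\[
\psi=\varphi_1\circ d\circ\varphi_2-\varphi_2\circ d\circ\varphi_1-(\varphi_1\wedge\varphi_2)\circ d^1 .
\]

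First I would dispose of the first two summands. Using $\varphi_2=\bar\varphi_2\circ\pi$ one gets $\varphi_1\circ d\circ\varphi_2=(\varphi_1\circ d\circ\bar\varphi_2)\circ\pi$, which visibly factors through $\pi$; symmetrically $\varphi_2\circ d\circ\varphi_1=(\varphi_2\circ d\circ\bar\varphi_1)\circ\pi$. Then I would handle the third summand. The surjection $\pi$ induces a surjection $\pi\wedge\pi\colon\Omega_{Y/S}^2=\Omega_{Y/S}^1\wedge\Omega_{Y/S}^1\twoheadrightarrow\Omega\wedge\Omega=\Omega^2$, and the hypothesis in Definition \ref{def of connection} that $d^1$ ``extends to'' $\Omega\to\Omega^2$ means precisely that $d^1\colon\Omega_{Y/S}^1\to\Omega_{Y/S}^2$ descends along $\pi$, i.e. there is $d^1_\Omega\colon\Omega\to\Omega^2$ with $(\pi\wedge\pi)\circ d^1=d^1_\Omega\circ\pi$. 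On the other hand, from the description $dx\wedge dy\mapsto\phi_1(x)\phi_2(y)-\phi_2(x)\phi_1(y)$ together with $\phi_i(x)=\varphi_i(dx)=\bar\varphi_i(\pi(dx))$, one checks that $\varphi_1\wedge\varphi_2$ factors as $(\bar\varphi_1\wedge\bar\varphi_2)\circ(\pi\wedge\pi)$, where $\bar\varphi_1\wedge\bar\varphi_2\colon\Omega^2\to\odi Y$ is the analogous alternating pairing built from $\bar\varphi_1,\bar\varphi_2$ (well defined by bilinearity and antisymmetry). Combining these,
\[
(\varphi_1\wedge\varphi_2)\circ d^1=(\bar\varphi_1\wedge\bar\varphi_2)\circ(\pi\wedge\pi)\circ d^1=(\bar\varphi_1\wedge\bar\varphi_2)\circ d^1_\Omega\circ\pi ,
\]
which again factors through $\pi$. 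Therefore $\psi$ factors through $\pi$, i.e. $[\phi_1,\phi_2]\in\sHom_Y(\Omega,\odi Y)$, and the claim follows by gluing.

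I do not anticipate a real obstacle here: once the bracket formula of the preceding lemma is available, the argument is a formal chase. The only points deserving care are the reduction to sections over an affine open (so that the lemma literally applies and one may manipulate honest $\odi Y$-linear maps of modules rather than sheaves), and making explicit the meaning of ``$d^1$ extends to $\Omega\to\Omega^2$'' in Definition \ref{def of connection}, namely the commutativity $(\pi\wedge\pi)\circ d^1=d^1_\Omega\circ\pi$ that drives the treatment of the wedge term.
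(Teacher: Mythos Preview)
Your argument is correct and is exactly the intended one: the paper states the corollary without proof, as it is an immediate consequence of the explicit bracket formula in the preceding lemma. Your reduction to affine opens, the verification that each of the three summands kills $\ker\pi$ (using $\varphi_i=\bar\varphi_i\circ\pi$ for the first two and the descent $(\pi\wedge\pi)\circ d^1=d^1_\Omega\circ\pi$ for the third), and the conclusion that the $\odi Y$-linear map $\psi$ therefore factors through $\pi$, constitute precisely the check the authors had in mind. One small clarification you might add: the individual summands such as $\varphi_1\circ d\circ\bar\varphi_2$ are not $\odi Y$-linear, so what you are really using is that each summand vanishes on $\ker\pi$, hence so does their signed sum $\psi$; since $\psi$ itself is $\odi Y$-linear (by the lemma) and $\pi$ is surjective, $\psi$ then factors $\odi Y$-linearly through $\pi$.
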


\begin{lem}\label{connection as stratification}
 Let $f\colon Y\coloneq\Spec A\arr S\coloneq\Spec R$ be a map of
 affine schemes and $\Omega_{Y/S}\to \Omega$ a quotient as in
 Definition \ref{def of connection}. Set
 $\Delta_A\coloneq\Hom_{\sO_Y}(\Omega,\odi Y)$. Assume moreover that $\Omega$ is
 locally free of finite type. Then $(A,\Delta_A)$
 is a differential ring over $R$. Moreover the functor
   \[
  \begin{tikzpicture}[xscale=4,yscale=-0.6]
      
    \node (A0_0) at (0, 0) {$F\colon\ \Conn(Y/S,\Omega)$};
    \node (A0_1) at (1, 0) {$\Diff(A)$};
    \node (A1_0) at (0, 1) {$(\tilde{M},\nabla_{\tilde{M}})$};
    \node (A1_1) at (1, 1) {$(M,\nabla_M)$};
    \path (A0_0) edge [->]node [auto] {$\scriptstyle{}$} (A0_1);
    \path (A1_0) edge [|->,gray]node [auto] {$\scriptstyle{}$} (A1_1);
  \end{tikzpicture}
  \]
which sends the $\mathcal{O}_Y$-module $\tilde{M}$ to the corresponding $A$-module $M$ and the $\Omega$-connection $\nabla_{\tilde{M}}$ to the map $\nabla_M$ defined on $\phi\in\Delta_A$ as
$\Hl^0(\tilde{M}\arrdi{\nabla_{\tilde{M}}}
\tilde{M}\otimes \Omega \arrdi{\id\otimes \phi} \tilde{M})$ is an equivalence of categories.
\end{lem}

\begin{proof} First we prove that the above functor
    $F\colon (\tilde{M},\nabla_{\tilde{M}})\mapsto (M,\nabla_M)$ induces an
    equivalence between the category of quasi-coherent $\Omega$-connections
    (not necessarily integrable) and the category of pairs
$(M,\nabla_M)$, where $M$ is an $A$-module and $\nabla_M$ is an
$A$-linear map $\Delta_A\to \End_{\Z}(M)$ satisfying the Leibniz
rule \eqref{eq:Leibniz rule} (not necessarily preserving the Lie
bracket). 

\textit{Full Faithfulness}. The faithfulness is clear. Now
suppose $\lambda\colon (M,\nabla_M)\to (N,\nabla_N)$ is a morphism in the target category. Then we get
directly a map $\lambda_\sO\colon \tilde{M}\to \tilde{N}$ between the corresponding $\mathcal{O}_Y$-modules, therefore we only have to
check that $\lambda_\sO$ is compatible with $\nabla_{\tilde{M}}$
and $\nabla_{\tilde{N}}$. We can check the compatibility Zariski
locally. We can localize both the $\Omega$-connections and the "not necessarily Lie-bracket
preserving differential modules" (Remark \ref{localize a differential
module}). The functor $F$ is compatible with the
localization, thus we are reduced to the case when $\Omega
=\sO_Y^{\oplus n}$ for some $n\in\N$. Then the map
$\nabla_{\tilde{M}}$ (resp. $\nabla_{\tilde{N}}$) becomes a map
of the form ${\tilde{M}}\to\prod_{i=1}^n{\tilde{M}}$ (resp.
${\tilde{N}}\to\prod_{i=1}^n{\tilde{N}}$). Let $p_i^M$ (resp.
$p_i^N$) be the $i$-th projection
$\prod_{i=1}^n{\tilde{M}}\to \tilde{M}$ (resp.
$\prod_{i=1}^n{\tilde{N}}\to \tilde{N}$). Since $\lambda$ is a map of differential modules, the map $\lambda_\sO$ is compatible with
$p_i^M\circ\nabla_{\tilde{M}}$ and
$p_i^M\circ\nabla_{\tilde{N}}$. Therefore, $\lambda_\sO$ is compatible with $\nabla_{\tilde{M}}$
and $\nabla_{\tilde{N}}$ by the universality of products of
modules.

\textit{Essential Surjectivity}. We cover $\Spec A$ by open
affines $\Spec A_{f_i}$. Suppose $\Omega$ is free over each
$\Spec A_{f_i}$, and suppose the claim holds when
$\Omega$ is free. Given $(M,\nabla_M)$ we get the localizations
$(M_i,\nabla_{M_i})$  on each $\Spec A_{f_i}$ and the corresponding quasi-coherent connections
$(\tilde{M}_i,\nabla_{\tilde{M}_i})$. Note that on
$U_{ij}\coloneq \Spec
A_{f_i}\bigcap \Spec A_{f_j}$ the sheaf $\Omega$ is also free,
and by the full faithfulness there is a unique isomorphism
$$(\tilde{M}_i,\nabla_{\tilde{M}_i})|_{U_{ij}}\xrightarrow{\
\ \ \ \simeq\ \ \ \
}(\tilde{M}_j,\nabla_{\tilde{M}_j})|_{U_{ij}}.$$
This allows to glue all $(\tilde{M}_i,\nabla_{\tilde{M}_i})$
together to get $(\tilde{M},\nabla_{\tilde{M}})$ which
corresponds to $(M,\nabla_M)$. We are therefore reduced to the case
    when $\Omega$ is free. In this case, we 
    can define $\nabla_{\tilde{M}}\colon \tilde{M}\arr
    \tilde{M}\otimes \Omega$ as follows. Choose a basis
    $s_1,\cdots ,s_n$ of $\Omega$ and let $f_1,\dots,f_n$ be
    its dual basis. We set
    $\nabla_{\tilde{M}}(m)=\sum_{i=1}^n\nabla_M(f_i)(m)\otimes s_i$ for all $m\in M$.

Now we come back to compare $\Conn(Y/S,\Omega)$ and $\Diff(A)$. To show
that the above equivalence induces the equivalence of these two
categories we just have to notice the formula in \cite[p.
179, last paragraph, 1.0.5]{Katz70} and the fact that
$\Delta_A\subseteq \Der(A/R)$ is a sub Lie algebra (Corollary \ref{sub-Lie
algebra}).  \end{proof}
\subsection{Crystalline site and crystals}
We recall here the general notion of small crystalline site and crystals on it. This was defined by Berthelot (\cite{Ber74}, \cite{BO78}). We use as our main reference for this theory  \cite[\href{https://stacks.math.columbia.edu/tag/09PD}{Tag 09PD}]{stacks-project} and \cite[\href{https://stacks.math.columbia.edu/tag/07GI}{Tag 07GI}]{stacks-project}.

\begin{defn}\label{dividedpowerscheme}\leavevmode
\begin{enumerate}
\item[-]{\cite[\href{https://stacks.math.columbia.edu/tag/07GU}{Tag 07GU}]{stacks-project}}
A divided power ring, or a PD-ring, is a triple $(A, I, \gamma)$ where $A$ is a ring, $I\subset A$ is an ideal, and  $\gamma=(\gamma_n)_{n\geq 1}$ is a divided power structure on $I$. A homomorphism of divided power rings $\varphi:(A,I,\gamma)\rightarrow (B,J,\delta)$ is a ring homomorphism $\varphi:A\rightarrow B$ such that $\varphi(I)\subset J$ and such that $\delta_n(\varphi(x))=\varphi(\gamma_n(x))$ for all $x\in I$ and $n\geq 1$. 
\item[-] {\cite[\href{https://stacks.math.columbia.edu/tag/07GI}{Tag 07GI}]{stacks-project}}.
A divided power scheme or a PD-scheme is the natural globalisation of a PD-ring. 
\item[-]
When we want to consider a homomorphism of PD-rings or PD-schemes, we will write it as a morphism of triples. On the other hand if $R$ is a ring an $R$-PD-ring is a PD-ring $(A, I, \gamma)$ where $A$ is an $R$-algebra (and the same for PD-schemes over $R$).
\end{enumerate}
\end{defn}

We fix a prime number $p$.
\begin{defn}\label{crysallinesiteoverS} \cite[\href{https://stacks.math.columbia.edu/tag/07IF}{Tag 07IF}]{stacks-project}
Let $\lS=(\lS, I, \gamma)$ be a PD-scheme such that $\lS$ is a $\Z_{(p)}$-scheme. Let $X$ be an $S=V(I)$-scheme, and we assume moreover that $p\in I$, 
\emph{i.e.} $S$ is killed by $p$. An object of the crystalline site $(X/\lS)_{\rm{crys}}$ is given by a triple $(U, \lT, \delta)$, 
where $U$ is a Zariski open of $X$, $\lT$ is an $\lS$-scheme,
$U\hookrightarrow \lT$ is a thickening of $\lS$-schemes defined
by a nil ideal $J$ and $(\lT,J,\delta)$ is a PD-scheme over
$(\lS, I, \gamma)$. We often denote $(U, \lT, \delta)$ simply by
$\lT$. Morphisms are defined in a natural way, and coverings are
defined using the Zariski topology on $\lT$.  We consider the
structure sheaf $\mathcal{O}_{X/\lS}$, defined by
$\mathcal{O}_{X/\lS}(\lT)\coloneq\Gamma(\lT, \mathcal{O}_\lT)$. 
\end{defn}

\begin{rmk}\label{directlimitofsites} Let  the notation be as in
    Definition \ref{crysallinesiteoverS} and set
    $\lS_n\coloneq \lS\times_{\Spec(\mathbb{Z})}
    \Spec(\mathbb{Z}/p^{n}\mathbb{Z})$. Then the crystalline site $(X/\lS)_{\rm{crys}}$ is the direct limit of the sites $(X/\lS_n)_{\rm{crys}}$.

\end{rmk}

\begin{rmk} We  use
    \cite[\href{https://stacks.math.columbia.edu/tag/09PD}{Tag
    09PD}]{stacks-project} as the main reference. Here we want to stress the compatibility of Definition \ref{crysallinesiteoverS} with more classical references. 
\begin{enumerate}
\item If $\lS$ is killed by a power of $p$, then the site
    defined in Definition \ref{crysallinesiteoverS} is the same
    as the crystalline site defined in \cite[p. 5.1]{BO78}, with the hypothesis that $p\in I$.

\item When $\lS=\Spec R$ is the spectrum of a Noetherian ring
    $R$ which is complete for the $I$-adic topology, and if
    $p\in I$, then the crystalline site $(X/\lS)_{\rm{crys}}$ of
    Definition \ref{crysallinesiteoverS} is equivalent to the
    site $\mathrm{Cris}(X/\hat{S})$ defined in \cite[7.17]{BO78}
(with $P=I$), where $\hat{S}\coloneq\textup{Spf}\,  R$ for the $I$-adic
topology. 
\item Shiho, in \cite{Shi07I}, developed a theory of relative crystalline cohomology for log schemes. He supposes that $I=p$ and (here we are in the simplified case where all the log structures are trivial) he generalised the situation (2) to the case where $\lS$ is a $p$-adic formal scheme separated and topologically of finite type over  $W$.

\end{enumerate}
\end{rmk}
\begin{defn}\label{crystalsoverS}
An $\mathcal{O}_{X/\lS}$-module $E$ on the site $(X/\lS)_{\mathrm{crys}}$ is called a crystal if every morphism $\varphi:\lT\rightarrow \lT'$ in $(X/\lS)_{\mathrm{crys}}$ induces an isomorphism $\varphi^*(E_{\lT'})\rightarrow E_\lT$, where we denote with $E_{\lT'}$ (resp. $E_{\lT})$ the Zariski sheaf on $\lT'$ (resp. on $\lT$) induced by $E$. A crystal is said to be of finite presentation if for every $\lT\in (X/\lS)_{\mathrm{crys}}$ the $\mathcal{O}_\lT$-module $E_{\lT}$ is of finite presentation. The category of crystals of finite presentation on $(X/\lS)_{\mathrm{crys}}$ is denoted by $\mathrm{Crys}(X/\lS)$.
\end{defn}

For any commutative diagram 
\[
\xymatrix{
X'\ar[r]^h\ar[d]^{g'}&X\ar[d]^g\\
\lS'\ar[r]^u&\lS
}
\]
where $u$ is a PD-morphism, we obtain a morphism of ringed topoi $h_{\rm{crys}}=(h_{\rm{crys}}^*, h_{\rm{crys}*})$ (\cite[\href{https://stacks.math.columbia.edu/tag/07KL}{Tag 07KL}]{stacks-project}). It is known that if $E$ is a crystal in $\mathrm{Crys}(X/\lS)$, then $h_{\rm{crys}}^*(E)$ is a crystal in $\mathrm{Crys}(X'/\lS')$ (\cite[Corollaire 1.2.4]{Ber74} and Remark \ref{directlimitofsites}). 


\begin{conv}\label{situation0}
      Let the hypothesis and notation be as in Definition \ref{crysallinesiteoverS}. Suppose moreover that we have a commutative diagram 
      \begin{equation}\label{situation}
      \xymatrix{X\ar@{^{(}->}[rr]^-i\ar[d]&&\lX\ar[d]^-f\\ S\ar[rr]&& \lS}
      \end{equation}
       in which $f$ is smooth and every scheme is affine: $X=\Spec C,\lX=\Spec P, S=
       \Spec A/I, \lS= \Spec A$. The map $i$ in the above diagram is a
       closed immersion defined by an ideal $J\subseteq P$ (in
       particular $IP\subset J$). 
       Let $\lD_{P,\gamma}\coloneq\Spec D_{P,\gamma}$ be the PD-envelope of $i:X\hookrightarrow \lX$ with
    respect to $(\lS, I,\gamma)$ and let $D$ be the $p$-adic
    completion of $D_{P,\gamma}$. Set $\lD\coloneq\Spec D$, $A_n\coloneq A/p^n$, 
    $\lS_n\coloneq \Spec A_n$, $P_n\coloneq
    P\otimes_AA_n$ and $\lX_n\coloneq \Spec P_n$. Let $\lD_{P_n,\gamma}\coloneq\Spec
    D_{P_n,\gamma}$ be the PD-envelope of $X\hookrightarrow
    \lX_n$ with
    respect to $(\lS, I,\gamma)$. Thanks to \cite[\href{https://stacks.math.columbia.edu/tag/07KG}{Tag
    07KG}]{stacks-project} we have $D=\varprojlim_{n\in \N}D_{P_n,\gamma}$ as
    PD-rings.
\end{conv}

\subsubsection{Crystals and connections over complete PD-envelopes}

    We denote by $\Omega_{D}$ the $p$-adic completion of the
    module of PD-differentials $\Omega_{D_{P,\gamma}/A,\bar{\gamma}}$
    (see
    \cite[\href{https://stacks.math.columbia.edu/tag/07HQ}{Tag
    07HQ}]{stacks-project}). Notice that $\Omega_D$ is a finite projective
    ${D}$-module: indeed
    \[\Omega_{D_{P,\gamma}/A,\bar{\gamma}}\simeq
    \Omega_{P/A}\otimes_P D_{P,\gamma}\](see
    \cite[\href{https://stacks.math.columbia.edu/tag/07HW}{Tag
    07HW}]{stacks-project}) and, when we take the $p$-adic
    completion, the left hand side, by definition, becomes
    $\Omega_{D}$, while the right hand side is isomorphic to
    $\Omega_{P/A}\otimes_P D$ because $\Omega_{P/A}$ is a finite projective $P$-module. Therefore 
     \[
        \Omega_D \simeq
        \Omega_{P/A}\otimes_P D
    \]
    which is a finite projective $D$-module.
 We denote by $\Omega_\lD$ the sheaf on $\lD$ associated to $\Omega_D$.

\begin{rmk}\label{omegasurjective}
Thanks to \cite[\href{https://stacks.math.columbia.edu/tag/07KG}{Tag
    07KG}]{stacks-project} we have 
\[
        \Omega_D\otimes_AA_n \simeq
        \Omega_{P_n/A}\otimes_{P_n} D_{P_n,\gamma}\simeq
        \Omega_{D_{P_n,\gamma}/A,\bar{\gamma}}
    \] for $n$ large. This allows us to
 construct a map 
 \[
    \Omega_{D/A}\ \arr \Omega_{D}
    \]
which is split surjective. Indeed, the section 
\[
 \Omega_D\simeq\Omega_{P/A} \otimes_P D \rightarrow \Omega_{D/A}
\]
is given by the
extension of scalars of the natural map $\Omega_{P/A}\arr
\Omega_{D/A}$ along the  map $P\to D$. 
\end{rmk}

%

\begin{defn}

    In the situation of \textbf{Setting} \ref{situation0},
    we denote by $\Conn(X/\lS, i,f)$ the full subcategory of
the category    $\Conn(\lD/\lS,\Omega_\lD)$ consisting of integrable $\Omega_{\lD}$-connections 
($\tilde{M},\nabla_{\tilde{M}}$), where $M$
    is a finitely
presented  $p$-adically complete $D$-module.
    
    \end{defn}

\begin{rmk}\label{When the diagram is catesian}\leavevmode
    \begin{enumerate}
        \item If $\lD$ is Noetherian, then
            $\Conn(X/\lS,i,f)=\Conn(\lD/\lS,\Omega_\lD)$ because
            in this case any finitely presented $D$-module is
            $p$-adically complete. 

    \item If $M$ is a $p$-adically complete $D$-module, the module $
        M\otimes_{D}\Omega_{D}$ is $p$-adically complete because $\Omega_D$ is
        a finite projective $D$-module. 
    In particular the connections defined above agree with the pairs considered in \cite[\href{https://stacks.math.columbia.edu/tag/07J7}{Tag 07J7}]{stacks-project}.
    
    \item If the diagram in \eqref{situation} is Cartesian, then
        the PD-structure $\gamma$ extends to $X\hookrightarrow
        \lX$
        (\cite[\href{https://stacks.math.columbia.edu/tag/07H1}{07H1}]{stacks-project}),
        and $\lD_{P,\gamma}=\lX$. Indeed, since the diagram is cartesian,
        $IP=J$ and $(P,
        IP)$ verifies the universal property of
        the PD-envelope. With these hypothesis we get that
        $\Omega_{D_{P,\gamma}/A,\bar{\gamma}}=\Omega_{P/A}$ (see
        \cite[\href{https://stacks.math.columbia.edu/tag/07HW}{Tag
        07HW}]{stacks-project}). Therefore the $p$-adic completions are isomorphic 
       
      \[
\widehat{\Omega_{D/A}} \simeq \widehat{\Omega_{P/A}}\simeq \Omega_D.
    \]
Moreover
\[
\Hom(\Omega_D,D)=\Der(D/A);
\] indeed a map
from $\Omega_{D/A}$ to a $p$-adically complete module factors
through $\Omega_D$. We remark that any derivation in $\Der(D/A)$ is
$\Z$-linear, hence it is automatically $p$-adically continuous.
    \item If we have another commutative diagram
\begin{equation*}
    \xymatrix{X'\ar@{^{(}->}[rr]^-{i'}\ar[d]&&\lX'\ar[d]^-{f'}\\
    S'\ar[rr]&& \lS'}
      \end{equation*}
 mapping to the original one, there is
an induced map $D'\arr D$ which yields a map
$\Omega_{D}\otimes_D D'\arr \Omega_{D'}$. Via this map we
obtain a functor $\Conn(X/\lS, i,f)\arr
\Conn(X'/\lS',i',f')$.     \end{enumerate}
\end{rmk}

\subsubsection{Topologically quasi-nilpotent connections}
\begin{defn}\label{quasi-nilpotent-affine-space0}
    In the situation of \textbf{Setting} \ref{situation0} where
    \eqref{situation} is cartesian, a connection 
$(\tilde{M},\nabla_{\tilde{M}})\in\Conn(X/\lS,i,f)$ is called
\textit{topologically quasi-nilpotent} if for all $n\geq 1$ its reduction
modulo $p^n$ is quasi-nilpotent in the sense of \cite[Definition 4.10, Remark
4.11]{BO78}.

We denote by    $\QNCf(X/\lS,i,f)$ the full subcategory of
$\Conn(X/\lS,i,f)$ consisting of
topologically quasi-nilpotent connections.
  \end{defn}



\begin{thm}\label{cry equivalent to quasi-nil}
    Let $X,\lX,\lS$ be as in Definition \ref{quasi-nilpotent-affine-space0}.
    Then there is a fully faithful additive tensor  functor
    \[
    \Crys(X/\lS)\arr \Conn(X/\lS,i,f)
\] whose essential image is $\QNCf(X/\lS,i,f)$. Moreover, the above
functor is functorial with respect to the diagram
\eqref{situation}.
\end{thm}
 
\begin{proof}     
    Given $E\in \Crys(X/\lS)$,  we take its restriction
    $E_n \in
    \Crys(X/\lS_n)$, obtaining $(\tilde{M}_n,\nabla_{\tilde{M}_n}) \in \Conn(X/\lS_n,i_n,f_n)$
    by \cite[Theorem 6.6]{BO78}. Here the $P_n$-module $M_n$ is
    $\Hl^0(E_{\lX_n})$. There are transition maps $\phi_n\colon
    M_{n+1}\to M_n$ which are horizontal, that is they preserve
    the connections. Since $E$ is a crystal, we have $M_{n+1}/p^nM_{n+1} \simeq M_n$.
    
The limit $M\coloneq \varprojlim_{n\in\N^+} M_n$ is a $D$-module since $D/p^nD=P_n$ and  $M/p^nM=M_n$
by
\cite[\href{https://stacks.math.columbia.edu/tag/09B8}{09B8}]{stacks-project}.
Moreover, $M$ also comes with a connection.
This association defines the functor $\Crys(X/\lS) \to \Conn(X/\lS,i,f)$, which is easily seen to be linear and to preserve the tensor product.

The full faithfulness and the claim about the essential image
follow from the corresponding statements in the 
$p^n$-torsion case (see e.g. \cite[Corollary 6.8]{BO78} or \cite[Théorème 1.6.5, p.
247]{Ber74}).
 
\end{proof}

\begin{rmk}\label{being qncf over lS = being qncf
    over lSn}\leavevmode
\begin{enumerate}

    \item The naturality of the functor in Theorem \ref{cry equivalent
        to quasi-nil} indicates that the pullback of a
        topologically
        quasi-nilpotent connection is topologically
        quasi-nilpotent.
    \item Directly from the definition one sees that
        $(\tilde{M},\nabla_{\tilde{M}})\in \Conn(X/\lS,i,f)$ belongs to
        $\QNCf(X/\lS,i,f)$ if and only if its pullback to
        $X/\lS_n$ belongs to $\QNCf(X/\lS_n, i_n, f_n)$ (see
        Remark \ref{directlimitofsites} for the notation) for some
        $n\in\N$.
\end{enumerate}  
\end{rmk}

\begin{lem}\label{subcategory of quasi-nilpotent}
  Suppose that we are in the situation of Definition
  \ref{quasi-nilpotent-affine-space0}. Then
  $$\QNCf(X/\lS,i,f)\subseteq \Conn(X/\lS,i,f)$$ is a full
  subcategory closed under taking subobjects, quotients,  tensor products and internal homomorphisms.
\end{lem}
\begin{proof}
    Directly from  Definition \ref{quasi-nilpotent-affine-space0} it is clear that subojects and quotient
    objects of topologically quasi-nilpotent connections are
    topologically quasi-nilpotent. We still have to show that if
    $(\tilde{E},\nabla_{\tilde{E}})$ and
    $(\tilde{F},\nabla_{\tilde{F}})$ are  topologically
    quasi-nilpotent connections, then their tensor product and
    their Hom are topologically quasi-nilpotent. This follows by
    checking the following relations for all $D\in \Der(\lD/\lS)$: 
 $[\nabla_{E\otimes F}(D)]^n(e\otimes f)$ is
    $$
    D^n(e\otimes f) = D^{n}(e)\otimes f+\cdots+\binom{n}{r}D^{n-r+1}(e)\otimes D^r(f)+\cdots+e\otimes D^n(f)
    $$
    and $[\nabla_{\Hom(E,F)}(D)]^n(\phi)$ is 
    \[
        \nabla_F(D)^n \circ \phi+\cdots+(-1)^r\binom{n}{r}\nabla_F(D)^{n-r}\circ
    \phi\circ \nabla_E(D)^r+\cdots+(-1)^n\phi\circ
    \nabla_E(D)^n.\qedhere
\]
 \end{proof}

\subsubsection{The situation when \eqref{situation} is cartesian}
\label{1.3cartesian} 


\begin{lem}\label{I-adic=p-adic} 

    Let $(\lS=\Spec A,I,\gamma)$ be an affine PD-scheme over $\mathbb{Z}_{(p)}$ such that $p\in I$. As above set  $S = \Spec A/I$,  $A_n=A/p^n$ and  $\lS_n=\Spec A_n$ for all $n\in \mathbb{N}$.

The closed embedding $S\hookrightarrow \lS_1$ is a locally nilpotent thickening, that is $I/pA$ is a nil ideal in $A/p$. In particular, if the ideal $I$ is finitely generated, then the closed embedding $S\hookrightarrow \lS_1$ is a nilpotent thickening. 
\end{lem}
\begin{proof}  The ideal $I$ has a PD-structure and therefore $p!\gamma_p(x)=x^p$ for all $x\in I$, so that $x^p\in pA$ as required.
\end{proof}

\begin{rmk}\label{1.3cartesian} Let $\lS=(\lS, I, \gamma)$ be a PD-scheme as in Lemma \ref{I-adic=p-adic}. Suppose moreover that $I$ is finitely generated. Let $g\colon X=\Spec C \to S=V(I)$ be a smooth map. 
    Under the assumptions of Lemma \ref{I-adic=p-adic}, we can build up a
diagram
\eqref{situation} out of the given map  $g\colon X\to S$ and the closed immersion $S\hookrightarrow\lS$ such that it is a cartesian diagram. Indeed, 
by
\cite[\href{https://stacks.math.columbia.edu/tag/07M8}{07M8}]{stacks-project} we can lift $g$ to a
smooth affine
map $f\colon \lX=\Spec P \to \lS=\Spec A$
not necessarily uniquely along $S\hookrightarrow \lS$.
Note that by \cite[Theorem 8.5.9]{ill05} the lifts of $g$ along
$S\hookrightarrow \lS_1$ and
$\lS_n\hookrightarrow\lS_{n+1}$ are unique. Thanks to Remark \ref{When the diagram is catesian} (3) and
the uniqueness of the lift to $\lS_n$ for all $n$, 
the spectrum $\lD$ of the $p$-adically completed PD-envelope $D$, which is the
$p$-adic completion of $P$, does not depend on the lift 
$f\colon \lX\to\lS$ we chose for $g$. 
\end{rmk}

\begin{defn}
Let $\lS=(\lS, I, \gamma)$ be a PD-scheme as in Lemma \ref{I-adic=p-adic} and we assume that $I$ is finitely generated.  Let $g\colon X\to S=V(I)$ be a smooth map. We construct a cartesian diagram as in Remark \ref{1.3cartesian}. As observed in Remark \ref{1.3cartesian}, 
the category $\Conn(X/\lS,i,f)$ does not depend on the choice of $f$ and
$i$ such that (\ref{situation}) is cartesian, so, in this case, we will just
write $\Conn(X/\lS)$ instead of $\Conn(X/\lS,i,f)$. Thanks to 
Theorem \ref{cry equivalent to quasi-nil} the full
subcategory $\QNCf(X/\lS,i,f)$ does not depend on the choice of such $f$ and $i$ either, thus we will write $\QNCf(X/\lS)$ instead
 of $\QNCf(X/\lS,i,f)$ when the conditions of
 Lemma \ref{I-adic=p-adic} are met.
\end{defn}

\begin{lem} Let $(\lS=\Spec A,I,\gamma)$ be as in in Lemma \ref{I-adic=p-adic}, and let $g\colon X\to S=V(I)$ be a smooth map.
    If $\lS$ is Noetherian, then we have
    $$\Conn(X/\lS)=\Conn(\lD/\lS,\Omega_\lD).$$  Therefore, the
    category
    $\Conn(X/\lS)$ is an abelian, symmetric monoidal category with internal homomorphisms.
\end{lem}
\begin{proof}
If $\lS$ is Noetherian, then $D$ is Noetherian and
$p$-adically complete, so every finitely presented $D$-module is $p$-adically
complete. The last claim follows from Lemma \ref{connection as stratification}
and Proposition \ref{first prop of differential modules}.
\end{proof}

\begin{lem}\label{connections in char zero are locally simple}
Let $(\lS=\Spec A,I,\gamma)$ be as in in Lemma \ref{I-adic=p-adic}, and let $g\colon X=\Spec C \to S=V(I)$ be a smooth map. Suppose moreover that $\lS=\Spec
A$, where $A$ is a complete DVR  of mixed characteristic $(0,p)$
with perfect residue field $k$ and fraction
field $K$. 

If $X$ is connected, then the rings $D$ (see \textbf{Setting} \ref{situation0}) and $D\otimes_A  K$ are regular domains and
$(D\otimes_A  K,\Der(D/A)\otimes_A K)$ is a locally simple differential ring.
\end{lem}
\begin{proof} We lift, as in Remark \ref{1.3cartesian}, the smooth map $g\colon X=\Spec C \to S=V(I)$ to a smooth map $f\colon \lX=\Spec P \to \lS=\Spec A$.

We first show that $D$ is a regular domain. Thanks to
    \cite[\href{https://stacks.math.columbia.edu/tag/07qw}{07QW}]{stacks-project}
    the ring $P$ is excellent, so it is a G-ring. According to
    \cite[\href{https://stacks.math.columbia.edu/tag/0AH2}{0AH2}]{stacks-project}
     the completion $P\to D$ is a regular map (\emph{i.e.} has geometrically regular fibers). Taking into account \cite[\href{https://stacks.math.columbia.edu/tag/031E}{031E}]{stacks-project} and the fact that $A\to P$ is regular by construction, we can conclude that $D$ is a regular ring.
     
     In order to conclude that $D$ is also a domain, it is enough to show that $\lD=\Spec D$ is connected.
Since the ideal $I$ is finitely generated,  by Lemma \ref{I-adic=p-adic} the maps $S\hookrightarrow \lS_n$ are  nilpotent thickenings as well as the maps $X\hookrightarrow \lX_n=\Spec D/p^n$ because the diagram 
    \begin{equation*}
      \xymatrix{X\ar@{^{(}->}[rr]\ar[d]&&\lX\ar[d]\\ S\ar[rr]&& \lS}
      \end{equation*}
    is cartesian. Therefore $\lX_n=\Spec D/p^n $  is connected for all
    $n\in\N$, because $X$ is connected by hypothesis. 
    
    In particular if $a \in D$ is an idempotent element, then
$$a_n\coloneq a\mod p^{n}D$$ is either $0$ or $1$ in
    $D/p^n$. As 
    $X$ is non-empty, none
    of those $(D/p^n)$'s is a zero ring, so $a_n\in D/p^n$ has to
    be $0$ for all $n$ or $1$ for all $n$. Thus
    $a=0$ or $1$ in $D$, which implies that $\lD$ is connected.      

From the fact that $D$ is a regular domain we deduce that its localization $D\otimes_A  K$ is a regular domain as well.

Thus it remains to prove that
$(D\otimes_A  K,\Der(D/A)\otimes_A K)$ is a locally simple differential ring. By \cite[Lemma 1.19]{Ogus84} and its proof we see that for any closed point $x\in \lD\times_\lS \Spec K$ the map 
 \[
 m_x/m_x^2\arr \Omega_D\otimes k(x)
 \]
 is injective, where $m_x$ and $k(x)$ are the maximal ideal and the residue field of $x$ respectively. Applying $\Hom_{k(x)}(-,k(x))$ and recalling that $\Omega_D$ is locally free  we obtain a surjective map
    $$\Hom_{D}(\Omega_D,D)\otimes_{D}k(x)\arr
    \Hom_{k(x)}(m_x/m_x^2,k(x)).$$
Since $\Hom(\Omega_D,D)=\Der(D/A)$ the result follows from \cite[Proposition 1.2.3]{Ked12}.\end{proof}

\begin{thm}\label{diff modules and connections are tannakian}
Let $(\lS=\Spec A,I,\gamma)$ be an affine PD-scheme over $\mathbb{Z}_{(p)}$ such that $p\in I$ and let $g\colon X=\Spec C \to S=\Spec(A/I)$ be a smooth map. Suppose moreover that $\lS=\Spec
A$, where $A$ is a complete DVR  of mixed characteristic $(0,p)$
with perfect residue field $k$ and fraction
field $K$. If $X$ is connected, then we have a diagram of Tannakian categories
    \[
  \begin{tikzpicture}[xscale=5.8,yscale=-1.2]
    \node (A0_0) at (0.1, 0) {$\QNCf(X/\lS)\otimes_A K$};
    \node (A0_1) at (1, 0) {$\Conn(\lD/\lS,\Omega_\lD)\otimes_A K$};
    \node (A0_2) at (2, 0) {$\Conn(\lD\otimes_A
        K/K,\Omega_\lD\otimes_A K)$};
    \node (A1_1) at (1, 1) {$\Diff(D,\Der(D/A))\otimes_A K$};
    \node (A1_2) at (2, 1) {$\Diff(D\otimes_A
        K,\Der(D/A)\otimes_A K)$};
    \path (A0_0) edge [->]node [auto] {$\scriptstyle{}$} (A0_1);
    \path (A0_1) edge [->]node [above,rotate=-90] {$\scriptstyle{\simeq}$} (A1_1);
    \path (A1_1) edge [->]node [auto] {$\scriptstyle{}$} (A1_2);
    \path (A0_2) edge [->]node [above,rotate=-90] {$\scriptstyle{\simeq}$} (A1_2);
    \path (A0_1) edge [->]node [auto] {$\scriptstyle{}$} (A0_2);
  \end{tikzpicture}
  \]
where all the functors are fully faithful tensor exact functors.
\end{thm}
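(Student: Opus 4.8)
The plan is to assemble the diagram entirely out of results already proved, so that all the real work is concentrated in a single application of Theorem~\ref{locally simple implies Tannakian} to the corner $\Diff(\lD\times_\lS K,\Der(\lD/\lS)\otimes K)$. First I would treat the two vertical arrows: both are instances of Lemma~\ref{connection as stratification}. Recall (from the lemma preceding Lemma~\ref{subcategory of quasi-nilpotent}, valid as $\lS$ and hence $\lD$ are Noetherian) that $\Conn(\lX/\lS,\Omega_\lD)=\Conn(\lD/\lS,\Omega_\lD)$, that $\Hom(\Omega_\lD,\odi{\lD})=\Der(\lD/\lS)$, and that $\Omega_\lD\cong\Omega^1_{\lX/\lS}\otimes\odi{\lD}$ is locally free of finite type by smoothness of $f$. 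Applying Lemma~\ref{connection as stratification} to $\Spec\lD\to\lS$ with $\Omega=\Omega_\lD$ gives an equivalence $\Conn(\lD/\lS,\Omega_\lD)\simeq\Diff(\lD,\Der(\lD/\lS))$ of symmetric monoidal abelian categories with internal homs, and tensoring with $K$ (which preserves all of this by Lemma~\ref{localization abelian}) yields the left vertical arrow. Inverting $p$ in the splitting just recalled shows $\Omega^1_{(\lD\otimes_R K)/K}\to\Omega_\lD\otimes_R K$ is a split surjection to which the differential extends and that $\Omega_\lD\otimes_R K$ is locally free of finite type, so Lemma~\ref{connection as stratification} applied to $\Spec(\lD\otimes_R K)\to\Spec K$ gives the right vertical equivalence, using $\Hom_{\lD\otimes_R K}(\Omega_\lD\otimes_R K,\lD\otimes_R K)=\Der(\lD/\lS)\otimes_R K$ (finite presentation of $\Omega_\lD$). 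An additive monoidal equivalence is automatically fully faithful, tensor and exact.

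Next the horizontal arrows. The bottom one is the localization functor of Lemma~\ref{localization} for the differential $R$-algebra $(\lD,\Der(\lD/\lS))$: its hypotheses hold since $\lD$ is Noetherian and $\Der(\lD/\lS)=\Hom(\Omega_\lD,\odi{\lD})$ is finitely generated (indeed locally free), so it is fully faithful, monoidal, exact and internal-hom preserving. The arrow $\Conn(\lD/\lS,\Omega_\lD)\otimes K\to\Conn(\lD\times K/K,\Omega_\lD\otimes K)$ is the evident base-change functor $(M,\nabla)\mapsto(M\otimes_R K,\nabla\otimes_R K)$; choosing a local basis of $\Omega_\lD$ and the induced basis of $\Omega_\lD\otimes_R K$ and unwinding Lemma~\ref{connection as stratification}, one checks that both composites to $\Diff(\lD\times_\lS K,\Der(\lD/\lS)\otimes K)$ send $(M,\nabla)$ to $(M\otimes_R K,\nabla_M\otimes_R K)$, so the square commutes; since the other three edges are fully faithful tensor exact (two being equivalences), so is this arrow. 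Finally $\QNCf(\lX/\lS,\Omega_\lD)\otimes K\to\Conn(\lD/\lS,\Omega_\lD)\otimes K$ is $-\otimes_R K$ applied to the inclusion of a full, exact, tensor subcategory (Lemma~\ref{subcategory of quasi-nilpotent}), hence fully faithful (morphisms only change by $-\otimes_R K$), exact (Lemma~\ref{localization abelian}) and monoidal.

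For the Tannakian assertion: by Lemma~\ref{connections in char zero are locally simple}, $(\lD\times_\lS K,\Der(\lD/\lS)\otimes K)$ is a locally simple differential ring; it is Noetherian because $\lD\times_\lS K$ is regular (loc.\ cit., via \cite[Lemma~1.19]{ogus84}), and connected whenever $\lX$ is (idempotents lift along $\odi{\lX}\to\lD\to\lD[1/p]$), as in the applications. Hence $\Diff(\lD\times_\lS K,\Der(\lD/\lS)\otimes K)$ is Tannakian by Theorem~\ref{locally simple implies Tannakian}; in particular it is rigid (Proposition~\ref{locally simple implies locally free}: finitely presented differential modules here are locally free, hence dualizable), $\End(\mathbf 1)$ is a field, and it admits a fiber functor $\omega$. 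Each of the other four categories embeds into it by a fully faithful tensor exact functor preserving internal homs; a full tensor subcategory of a rigid tensor category that is stable under duals is again rigid, is abelian by Proposition~\ref{first prop of differential modules}, Lemma~\ref{localization abelian} and the lemmas on $\Conn$, has $\End(\mathbf 1)$ a field, and inherits a fiber functor by restricting $\omega$. So all five categories are Tannakian.

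The genuinely non-formal points, and where I expect the friction, are: (i) the commutativity of the square, i.e.\ the concrete but slightly fussy verification that forming the associated differential module commutes with $-\otimes_R K$; and (ii) propagating rigidity (hence the Tannakian conclusion of Theorem~\ref{locally simple implies Tannakian}) from $\Diff(\lD\times_\lS K,\Der(\lD/\lS)\otimes K)$ back through the diagram to the four ``sub''categories — one could alternatively rerun the argument of Theorem~\ref{locally simple implies Tannakian} in each category, but routing everything through the single locally simple ring $\lD\times_\lS K$ is cleaner. All the essential mathematical input — regularity and local simplicity of $\lD\times_\lS K$ — has already been supplied by Lemma~\ref{connections in char zero are locally simple}, so the remaining argument is bookkeeping.
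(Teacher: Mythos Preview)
Your proposal is correct and follows essentially the same route as the paper: the vertical equivalences via Lemma~\ref{connection as stratification}, the horizontal functors via Lemmas~\ref{localization} and~\ref{subcategory of quasi-nilpotent}, then Tannakian input at the corner from Lemma~\ref{connections in char zero are locally simple} and Theorem~\ref{locally simple implies Tannakian}, and propagation back through the diagram. The only notable difference is in how rigidity is transported: the paper checks directly that the canonical map $M^\vee\otimes N\to\Homsh(M,N)$ is an isomorphism in each subcategory (it becomes one in the corner, and the functors are fully faithful and preserve internal homs and tensor), which sidesteps the need to phrase things as ``stable under duals''; your formulation is equivalent once you observe that duals are internal homs into the unit.
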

\begin{proof}
 The two vertical equivalences come from Lemma \ref{connection as
 stratification} since $\Der(D/A)=\Hom_D(\Omega_D,D)$ and that
 $\Omega_D$ is locally free. Notice that $\lD=\Spec D$ is
 Noetherian because $D$ is a completion of an affine smooth
 $A$-algebra. In particular the horizontal arrows on the right
 are fully faithful, exact, tensorial and preserve internal homomorphisms
 thanks to Lemma \ref{localization}.
 The left horizontal arrow is fully faithful, exact, tensorial and preserves
 internal homomorphisms by Lemma \ref{subcategory of quasi-nilpotent}.
 
 By Theorem \ref{locally simple implies Tannakian} and Lemma \ref{connections
 in char zero are locally simple} we can conclude that
 $\Diff(D\otimes_A K,\Der(D/A)\otimes K)$ is a Tannakian category. From this it easily follows that for all other categories there exists a fiber functor and the endomorphisms of the trivial object form a field. The rigidity of those categories also follows. Indeed we must check that for all objects $M,N$ in one of those categories the natural arrow
 \[
 M^\vee \otimes N \arr \textup{\underline{Hom}}(M,N)
 \]
 where $\textup{\underline{Hom}}(M,N)$ denotes the internal Hom, is an isomorphism. Because all functors preserves internal
 homomorphisms and tensor product, this morphism become an
 isomorphism in $\Diff(D\otimes_A K,\Der(D/A)\otimes K)$ and, because all  functors are fully faithful, this morphism has to be an isomorphism.
\end{proof}

\subsection{Crystalline fundamental group}

In this section we consider the following situation.  Let $k$ be
a perfect field of characteristic $p>0$, and let $W$ be the ring of
Witt vectors of $k$. Set $\lW\coloneq \Spec W$. We denote by $\gamma$ the canonical
$\mathrm{PD}$-structure on $pW$, $K$ the fraction field of
$W$. Set $W_n\coloneq W/p^{n}W$ and $\lW_n\coloneq\Spec W_n$. We denote by $\gamma_n$ the
induced $\mathrm{PD}$-structure on $pW_n$. The base PD-scheme
$(\lS,I,\gamma)$ is
$(\lW, pW, \gamma)$, and $S=\Spec k$.

\begin{defn}\label{isocrystals}
Let $X$ be a scheme over $k$. We denote by $I_{\rm{crys}}(X/\lW) $ the category of finitely presented isocrystals. This is the category  $\Crys(X/\lW)$ up to
isogeny, \emph{i.e.} the category whose objects are exactly those in
$\Crys(X/\lW)$ and whose morphisms are obtained inverting the multiplication by $p$. Thus we have a natural functor
$$\Crys(X/\lW)\arr I_{\crys}(X/\lW)$$
which is the identity on objects. To distinguish objects
in $\Crys(X/\lW)$ from those in $I_{\crys}(X/\lW)$ we denote by
$K\otimes E$ the image of $E\in\Crys(X/\lW)$ under the above
functor, and we say that $E$ is a lattice for the isocrystal $\mathcal{E}$ if $K\otimes E\cong \mathcal{E}$.
\end{defn}

The main result of the section is the following
\begin{thm} \label{isocrystannaka}
If $X$ is a smooth, quasi-compact and connected $k$-scheme, then the category $I_{\rm{crys}}(X/\lW)$ is a Tannakian category over a field
    extending $K$. 
    
    If $Y$ is another smooth, quasi-compact and connected
    $k$-scheme with a map $Y\arr X$, then the pullback
    $I_{\rm{crys}}(X/\lW)\arr I_{\rm{crys}}(Y/\lW)$ is an exact tensor functor.
    Moreover $I_{\rm{crys}}(\Spec k/\lW) = \Vect(K)$ and, if
    $x\colon \Spec k\arr X$ is a rational point, then $I_{\rm{crys}}(X/\lW)$ is a neutral $K$-Tannakian category via $x^*\colon I_{\rm{crys}}(X/\lW)\arr I_{\rm{crys}}(\Spec k/\lW)=\Vect(K)$.
\end{thm}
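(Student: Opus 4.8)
The plan is to deduce Theorem \ref{isocrystannaka} from the local Tannakian result already established in Theorem \ref{diff modules and connections are tannakian}, by gluing the local pictures and using faithfully flat/Zariski descent. First I would reduce to the affine case: since $X$ is smooth and quasi-compact, it admits a finite Zariski cover by affine opens $X_i = \Spec A_i$ each of which embeds as a closed subscheme of a smooth affine $\lW$-scheme $\lX_i$ (for instance take $\lX_i$ to be affine space and lift generators, or use that smooth affine schemes lift). For each such $i$ one is in the situation \eqref{situation} with $\lS = \lW$, and Theorem \ref{cry equivalent to quasi-nil} identifies $\Crys(X_i/\lW)$ with $\QNCf(\lX_i/\lW,\Omega_{\lD_i})$, so after tensoring with $K$ we get, via Definition \ref{isocrystals} and the fully faithful functor of Lemma \ref{localization}, that $I_{\rm{crys}}(X_i/\lW)$ is equivalent to $\QNCf(\lX_i/\lW,\Omega_{\lD_i})\otimes K$, which by Theorem \ref{diff modules and connections are tannakian} is Tannakian over a field extending $K$.

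Next I would assemble the global statement. Using that crystals (and hence isocrystals) satisfy Zariski descent — a crystal on $X/\lW$ is the same as a compatible family of crystals on the $X_i/\lW$ together with glueing isomorphisms on overlaps $X_i\cap X_j$ — the category $I_{\rm{crys}}(X/\lW)$ is the $2$-fibre product (equaliser) of the local categories $I_{\rm{crys}}(X_i/\lW)$ along the restriction functors to $I_{\rm{crys}}(X_i\cap X_j/\lW)$. Each local category is abelian, $K$-linear, rigid symmetric monoidal with internal Hom (by Proposition \ref{first prop of differential modules}, Lemma \ref{subcategory of quasi-nilpotent}, and the Tannakian structure from Theorem \ref{diff modules and connections are tannakian}), and all restriction functors are exact tensor functors; these properties are inherited by the limit category, so $I_{\rm{crys}}(X/\lW)$ is an abelian rigid symmetric monoidal $K$-linear category in which every object is dualisable. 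The pullback functoriality statement ($Y\to X$ induces a tensor exact functor $I_{\rm{crys}}(X/\lW)\to I_{\rm{crys}}(Y/\lW)$) is then immediate: $h_{\rm{crys}}^*$ is exact and monoidal on crystals, hence on isocrystals.

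It remains to identify the field $\End(\mathbf{1})$ and handle neutrality. For the endomorphisms of the unit object: on each connected affine piece Theorem \ref{diff modules and connections are tannakian} gives that $\End$ of the trivial connection is a field, and since $X$ is connected the glueing forces $\End_{I_{\rm{crys}}(X/\lW)}(\mathcal{O}_{X/\lW}\otimes K)$ to be the equaliser of these, hence still a field $L\supseteq K$ (it is a $K$-subalgebra of a field with no zero divisors that is finite-dimensional — or at least integral — over $K$, so a field). By Deligne's criterion this, together with abelian rigid symmetric monoidal and the existence of a fibre functor, makes $I_{\rm{crys}}(X/\lW)$ Tannakian; a fibre functor can be produced by pulling back along any closed point of $X$ (reducing to the case $X=\Spec$ of a field, which one checks directly gives $\Vect$ of a finite extension of $K$) or, when a rational point $x\in X(k)$ exists, along $x$: the computation $I_{\rm{crys}}(\Spec k/\lW) = \Vect(K)$ follows because $\Spec k$ embeds in $\Spec\lW$ with PD-envelope $\lW$ and $\Omega_{\lW/\lW}=0$, so $\QNCf = \FMod(\lW)$ and tensoring with $K$ gives $\Vect(K)$; then $x^*$ is a $K$-linear exact tensor functor to $\Vect(K)$, which is automatically faithful on the rigid category and hence a fibre functor, so $I_{\rm{crys}}(X/\lW)$ is neutral over $K$. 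The main obstacle I expect is the bookkeeping of Zariski descent for (iso)crystals — verifying that the $2$-categorical glueing is compatible with the tensor structure and internal Hom, and that $\End(\mathbf{1})$ really stays a field after glueing over a connected base rather than merely a product of fields; the rest is assembling facts already proved.
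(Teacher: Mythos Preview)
Your overall strategy is the same as the paper's: reduce to affine opens, use the local Tannakian result via the equivalence with quasi-nilpotent connections, and glue by Zariski descent. The paper proceeds exactly this way, so you are on the right track.

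There is, however, one genuine gap. You write that the pullback statement for $Y\to X$ is ``immediate: $h_{\rm crys}^*$ is exact and monoidal on crystals, hence on isocrystals''. This is not correct as stated. On the level of crystals, $h_{\rm crys}^*$ corresponds (locally via \ref{cry equivalent to quasi-nil}) to $M\mapsto M\otimes_{\odi{\lU}}\odi{\lV}$ for the chosen lifts, which is only right exact in general. Exactness on \emph{iso}crystals is true, but it hinges on the fact that after inverting $p$ all objects become locally free (this is precisely the content of \ref{locally simple implies locally free} and \ref{connections in char zero are locally simple}). The paper does not assert exactness directly; instead it builds a commutative diagram where the fiber functors $\Icrys(X/\lW)\to\Vect(\mathbb K)$ and $\Icrys(Y/\lW)\to\Vect(\mathbb K)$ (coming from a geometric point of $\lV\times K$) are exact and faithful and the latter factors through $f_{\rm crys}^*$, forcing $f_{\rm crys}^*$ to be exact. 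You should either invoke local freeness explicitly or reproduce this fiber-functor argument.

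A smaller point: your justification that $\End(\mathbf 1)$ is a field (``a $K$-subalgebra of a field \ldots\ so a field'') is not quite right, since the global endomorphism ring sits inside a \emph{product} of the local fields $L_i$, which has zero divisors. The paper's argument is the one you hint at with connectedness: if $\phi\neq 0$ then some $\phi_i\neq 0$; but $(\phi_i)_{|U_{ij}}=(\phi_j)_{|U_{ij}}$, and the restriction $\Icrys(U_j/\lW)\to\Icrys(U_{ij}/\lW)$ is a tensor functor between Tannakian categories, hence faithful, so $\phi_j\neq 0$ as well; thus all $\phi_i\neq 0$ and $\phi$ is invertible. You flagged this as the ``main obstacle'', and indeed this is the clean way to resolve it.
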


\begin{defn}\label{crystalline fundamental group}
Let $X$ be a smooth, quasi-compact and connected $k$-scheme with a rational
point $x\in X(k)$. We define $\pi_1^{\rm{crys}}(X/\lW, x)$ as the Tannaka dual
of the neutral Tannakian category $I_{\rm{crys}}(X/\lW)$ endowed with the fiber
functor $x^*$ (see Theorem \ref{isocrystannaka}).
\end{defn}

\begin{rmk}
The prounipotent completion of the group scheme defined in Definition \ref{crystalline fundamental group} has been defined and studied by Shiho in \cite{Shi00} and \cite{Shi02} (in the more general situation of log schemes).
\end{rmk}

\begin{lem}\label{affine lift general}
 Let $R$ be a complete Noetherian ring with respect to an ideal
$I\subseteq R$, and set $Z\coloneq\Spec R/I$, $\lZ\coloneq\Spec
R$. Consider also a smooth affine map  $V \arr Z$. We denote by $(-)_n$ the base change to $\lZ_n=\Spec(R/I^n)$. Then:
\begin{enumerate}
\item There exists a smooth affine map $\widetilde V=\Spec \widetilde D\to \lZ$ lifting $V\to Z$.
 \item There exists an affine and flat map $V_\lZ=\Spec D \to
     \lZ$ lifting $V\to Z$ such that $D$ is an $I$-adically
     complete ring. We can choose as $D$ the $I$-adic completion
     of an $R$-algebra $\widetilde D$ as in $(1)$.
     Moreover, $V_\lZ$ is a Noetherian scheme and all $(V_\lZ)_n \to \lZ_n$ are smooth.
 \item If $V_\lZ\to \lZ$ and $V'_\lZ\to \lZ$ are two lifts as in $(2)$ then there exists a (not necessarily unique) $\lZ$-isomorphism $V_\lZ\to V'_\lZ$ lifting $\id_V\colon V\to V$.
\end{enumerate}
\end{lem}
\begin{proof}
 $(1)$ This is 
\cite[\href{https://stacks.math.columbia.edu/tag/07M8}{Tag
07M8}]{stacks-project}.

$(2)$ Let $D$ be the $I$-adic
completion of $\widetilde D$ and set
$V_\lZ \coloneq \Spec D$.
 By
\cite[\href{https://stacks.math.columbia.edu/tag/05GH}{Tag
05GH}]{stacks-project} and
\cite[\href{https://stacks.math.columbia.edu/tag/0912}{Tag
0912}]{stacks-project} the ring $D$ is $ID$-adically complete,
Noetherian, $D/ID = \widetilde D/I\widetilde D$ and
$\widetilde D\to D$ is flat, so that $V_\lZ \to \lZ$ is
flat as well.

$(3)$ It is enough to find a system of compatible $\lZ_n$-maps $\phi_n\colon (V_\lZ)_n \to (V'_\lZ)_n$ with $\phi_0=\id_V$ (and thus automatically isomorphisms). Consider the diagram
  \[
  \begin{tikzpicture}[xscale=2.5,yscale=-1.2]
    \node (A0_0) at (0, 0) {$(V_\lZ)_n$};
    \node (A0_1) at (1, 0) {$U$};
    \node (A0_2) at (2, 0) {$(V_\lZ)_{n+1}$};
    \node (A1_0) at (0, 1) {$(V'_\lZ)_n$};
    \node (A1_1) at (1, 1) {$(V'_\lZ)_{n+1}$};
    \node (A2_0) at (0, 2) {$\lZ_n$};
    \node (A2_1) at (1, 2) {$\lZ_{n+1}$};
    \path (A0_2) edge [->,dashed]node [auto] {$\scriptstyle{\phi_{n+1}}$} (A1_1);
    \path (A0_0) edge [->]node [auto] {$\scriptstyle{}$} (A0_1);
    \path (A0_1) edge [->]node [auto] {$\scriptstyle{\alpha}$} (A1_1);
    \path (A1_0) edge [->]node [auto] {$\scriptstyle{}$} (A1_1);
    \path (A0_2) edge [->,dashed]node [auto] {$\scriptstyle{\beta}$} (A0_1);
    \path (A0_2) edge [->,bend right=30]node [auto] {$\scriptstyle{}$} (A2_1);
    \path (A1_0) edge [->]node [auto] {$\scriptstyle{}$} (A2_0);
    \path (A0_0) edge [->,bend right=40]node [auto] {$\scriptstyle{}$} (A0_2);
    \path (A1_1) edge [->]node [auto] {$\scriptstyle{}$} (A2_1);
    \path (A0_0) edge [->]node [auto] {$\scriptstyle{\phi_n}$} (A1_0);
    \path (A2_0) edge [->]node [auto] {$\scriptstyle{}$} (A2_1);
  \end{tikzpicture}
  \]
where $\alpha\colon U\to (V'_\lZ)_{n+1}$ is any flat lift of $\phi_n$, which exists by $(2)$ because $\phi_n$ is an isomorphism and thus it is smooth. Since $\lZ_n$ is affine, by \cite[Theorem
8.5.9, pp. 213-214]{ill05} we can find the dashed
$\lZ_{n+1}$-isomorphism $\beta\colon (V'_\lZ)_{n+1} \to U$
making the above diagram commutative. The choice
$\phi_{n+1}=\alpha\circ\beta$ yields the desired lifting of $\phi_n$.
\end{proof}

\begin{lem} \label{affine lift}
    Let  $X$ be a smooth affine 
    scheme over $k$. Then:
    \begin{enumerate}
    \item There exists a smooth affine map $\widetilde X = \Spec \widetilde B \to \lW$ lifting $X\to \Spec k$.
     \item There exists a flat and affine $\lW$-scheme $
         X_\lW=\Spec B \to \lW$ lifting $X\to \Spec k$ and such
         that $B$ is $p$-adically complete. We can choose as $B$
         the $p$-adic completion of a $W$-algebra $\widetilde B$
         as in $(1)$. Moreover, $X_\lW$ is a Noetherian scheme and all maps $(X_\lW)_n=\Spec B/p^{n}B\to \lW_n$ are smooth.
     
    \item If $f\colon Y\arr X$ is a smooth affine map over $k$
        and $X_\lW, Y_\lW \to \lW$ are the complete lifts of $X,Y$ as in $(2)$ respectively then there exists
        a flat map $f_\lW\colon Y_\lW \arr X_\lW$ lifting
        $f\colon Y\arr X$. Moreover,  all $(f_\lW)_n\colon (Y_\lW)_n \to (X_\lW)_n$ are smooth.
        
    \item If $X_\lW\to \lW$ and $X'_\lW\to \lW$ are two lifts as in $(2)$ then there exists a $\lW$-isomorphism $X_\lW \to X'_\lW$ lifting $\id_X\colon X\to X$.
    
    \item If $f_\lW,f'_\lW\colon Y_\lW\to X_\lW$ are two lifts as in $(3)$ then there exists an automorphism $\sigma$ of $Y_\lW$ fitting in the diagram
      \[
  \begin{tikzpicture}[xscale=1.5,yscale=-1.2]
    \node (A0_0) at (0, 0) {$Y$};
    \node (A0_1) at (1, 0) {$Y$};
    \node (A0_2) at (2, 0) {$X$};
    \node (A0_3) at (3, 0) {$\Spec k$};
    \node (A1_0) at (0, 1) {$Y_\lW$};
    \node (A1_1) at (1, 1) {$Y_\lW$};
    \node (A1_2) at (2, 1) {$X_\lW$};
    \node (A1_3) at (3, 1) {$\lW$};
    \path (A0_1) edge [->]node [auto] {$\scriptstyle{}$} (A1_1);
    \path (A0_0) edge [->]node [auto] {$\scriptstyle{\id_Y}$} (A0_1);
    \path (A0_1) edge [->]node [auto] {$\scriptstyle{}$} (A0_2);
    \path (A1_0) edge [->,dashed]node [auto] {$\scriptstyle{\sigma}$} (A1_1);
    \path (A0_3) edge [->]node [auto] {$\scriptstyle{}$} (A1_3);
    \path (A1_1) edge [->]node [auto] {$\scriptstyle{f_\lW}$} (A1_2);
    \path (A0_2) edge [->]node [auto] {$\scriptstyle{}$} (A1_2);
    \path (A1_0) edge [->,bend left=30]node [auto,swap] {$\scriptstyle{f'_\lW}$} (A1_2);
    \path (A1_2) edge [->]node [auto] {$\scriptstyle{}$} (A1_3);
    \path (A0_2) edge [->]node [auto] {$\scriptstyle{}$} (A0_3);
    \path (A0_0) edge [->]node [auto] {$\scriptstyle{}$} (A1_0);
  \end{tikzpicture}.
  \]
    \end{enumerate}
\end{lem}

\begin{proof} 
%

If we apply Lemma \ref{affine lift general} with $R=W$, $I=pW$ and $V=X$, so that $Z=\Spec k$ and $\lZ=\lW$, we obtain $(1)$, $(2)$ and $(4)$. 

Now consider the situation of $(3)$ and $(5)$ and set
$X_\lW=\Spec R$. We apply Lemma \ref{affine lift general} with
$I=pR$ and $V=Y$, so that $Z=X$ and $\lZ=X_\lW$. Lemma
\ref{affine lift general} (3) directly implies case $(5)$. From
Lemma \ref{affine lift general} $(2)$ we obtain a lift $V_\lZ\to
\lZ=X_\lW$ of $Y\to X$ and, using $(4)$, we find a
$\lW$-isomorphism $\phi$ from $Y_\lW \to \lW$ to $V_\lZ \to
X_\lW \to \lW$ which lifts $\id_Y\colon Y\to Y$: the composition
$Y_\lW \arrdi \phi V_\lZ \to X_\lW$ is the desired map $f_\lW$
in (3).
\end{proof}

\begin{prop}\label{Crystals abelian}
 If $X$ is a smooth and quasi-compact $k$-scheme then $\Crys(X/\lW)$ is a symmetric monoidal, abelian category with internal homomorphisms.
\end{prop}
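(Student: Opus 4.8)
The plan is to bootstrap from the affine case, which is essentially already in hand, and then glue. Since $\Crys(X/\lW)$ is by construction an $\sO_{X/\lW}$-linear, hence additive, category, and since a crystal on $X$ is the same datum as a compatible family of crystals on the members of a Zariski open cover of $X$ together with gluing data (Zariski descent for crystals), it will suffice to show two things: (i) for each affine open $U$ in a suitable cover, $\Crys(U/\lW)$ is abelian, symmetric monoidal and has internal Hom, with all these operations computed on the underlying modules; and (ii) these operations commute with restriction to smaller affine opens, so that they descend to $X$.

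For (i), I would first use quasi-compactness of $X$ to pick a finite affine open cover $X=\bigcup_i X_i$, and then an affine open cover of each intersection $X_i\cap X_j$, so that every scheme occurring is a smooth affine $k$-scheme. For such a $U=\Spec A$, Lemma \ref{affine lift} provides a $p$-adically complete flat $\lW$-lift $\lU$; the square relating $U$, $\lU$, $\Spec k$, $\Spec\lW$ is cartesian and $I=(p)$ is principal, so Remark \ref{When the diagram is catesian}(3) applies and the relevant PD-envelope is $\lU$ itself. Then Theorem \ref{cry equivalent to quasi-nil} gives a tensor equivalence $\Crys(U/\lW)\simeq \QNCf(\lU/\lW,\Omega_{\lU})$. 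Since $\lW$, hence $\lU$, is Noetherian, the lemma preceding \ref{subcategory of quasi-nilpotent} (together with \ref{connection as stratification}) identifies $\Conn(\lU/\lW,\Omega_{\lU})$ with the category of finitely presented differential modules over $(\lU,\Der(\lU/\lW))$, which by Proposition \ref{first prop of differential modules} is abelian, symmetric monoidal and has internal Hom; and Lemma \ref{subcategory of quasi-nilpotent} shows that the full subcategory $\QNCf(\lU/\lW,\Omega_{\lU})$ is stable under kernels, cokernels, tensor products and internal Hom. Transporting along the equivalence, $\Crys(U/\lW)$ acquires the desired structure, and all operations are manifestly computed on the underlying $\lU$-modules, with finite presentation preserved because $\lU$ is Noetherian.

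For (ii), I would observe that restricting a crystal along an open immersion between smooth affine $k$-schemes corresponds, under the equivalences above, to a localization of the lift, and that such a base-change functor is exact and monoidal (in the spirit of Lemmas \ref{localization abelian} and \ref{localization}). Hence, given a morphism $f\colon E\to F$ of crystals on $X$, the kernels, cokernels, tensor products and internal Hom's formed over the $X_i$ restrict compatibly to the affine opens covering the intersections $X_i\cap X_j$, so they glue to crystals $\Ker f$, $\Coker f$, $E\otimes F$, $\Homsh(E,F)$ on $X$. Their universal properties, the associativity and commutativity constraints, the unit $\sO_{X/\lW}$, and the tensor-Hom adjunction are all verified on the affine cover, where they hold by (i); similarly one checks locally that every monomorphism is a kernel and every epimorphism a cokernel. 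The one genuinely delicate point, already flagged in (i), is that the kernel and cokernel of a morphism of crystals stay crystals \emph{of finite presentation}; this is exactly where Noetherianity of $\lW$ (equivalently, of each lift) is essential, and it is the reason the whole development is carried out for finitely presented crystals.
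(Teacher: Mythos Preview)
Your proposal is correct and follows essentially the same route as the paper: reduce to the affine case via a finite cover, use Lemma~\ref{affine lift} to lift, invoke Theorem~\ref{cry equivalent to quasi-nil} together with Lemma~\ref{subcategory of quasi-nilpotent} and Proposition~\ref{first prop of differential modules} to obtain kernels, cokernels, tensor products and internal Hom locally, check compatibility with restriction via the (unique) map of lifts, and then glue using that $\Crys(-/\lW)$ is a Zariski stack. The only cosmetic difference is that the paper observes at the outset that cokernels already exist globally (pullback of sheaves is right exact and cokernels of finitely presented modules are finitely presented), so only kernels and internal Hom actually need to be glued; your argument glues everything, which is harmless.
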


\begin{proof}[Proof of Theorem \ref{isocrystannaka} and Proposition \ref{Crystals abelian}]
    Firstly note that the category $\Crys(X/\lW)$ is a symmetric monoidal additive $\lW$-linear category. It also admits cokernels as the pullback functor of 
    sheaves of modules is right exact and cokernels of maps of finitely presented
    modules are still finitely presented (\cite[\href{https://stacks.math.columbia.edu/tag/0519}{Tag 0519}]{stacks-project}).

    Now we consider the existence of kernels and the internal homomorphisms.
    Let $\{U_i\}_{i\in I}$ be a finite Zariski covering of $X$ such that each $U_i$ is 
    an affine non-empty scheme. Taking into account Lemma \ref{affine lift}, for each $U_i$ we can choose a smooth
    lift $\lU_i=\Spec A_i\to \lW$ of $U_i\to \Spec k$. Set $(U_i)_\lW$ for the spectrum of the $p$-adic completion of
    $A_i$. By Lemma \ref{subcategory of quasi-nilpotent} and
    Theorem \ref{cry equivalent to quasi-nil}  we see that each
    $\Crys(U_i/\lW)$ admits kernels and internal homomorphisms.

    It is straightforward that $\Crys(-/\lW)$ is a stack 
    on the small Zariski site of $X$. If $U_{ij}$ is a non-empty affine open inside $U_i\cap U_j$, then 
    by Lemma \ref{affine lift} (3) there is a flat $\lW$-lift
    $(U_{ij})_{\lW}\to (U_i)_{\lW}$ (note that this is not an
    open immersion!), whose flatness implies that kernels and internal homomorphisms are preserved at the level of topologically quasi-nilpotent connections
    by the pullback. We can glue kernels and internal homomorphisms in $\Crys(X/\lW)$ using the universal property defining them.

    Thus we can conclude that $\Crys(X/\lW)$ and, by Lemma \ref{localization abelian}, $I_{\rm{crys}}(X/\lW)$ are abelian categories, because the canonical map from the coimage to the image is an isomorphism (as it is an isomorphism
    when restricted to each $U_i$). Moreover by construction and
    again by Lemma \ref{localization abelian} restriction to an open
    is exact, tensorial and preserves internal homomorphisms for both $\Crys(-/\lW)$ and $I_{\rm{crys}}(-/\lW)$. This ends the proof of Proposition \ref{Crystals abelian}.
    
    We now deal with the proof of Theorem \ref{isocrystannaka}. In particular we assume that $X$ is connected. In particular $X$ and all $U_i$ are integral schemes. 
   Since we are in the
    situation of Remark \ref{1.3cartesian}, the category
    ${I_{\rm{crys}}(U_i/\lW)}$ is Tannakian by
    Theorem \ref{diff modules and connections are tannakian}.
    
    It is easy to check that $I_{\rm{crys}}(-/\lW)$ is a
    prestack in the small Zariski site of $X$, that is
    morphisms between isocrystals form a Zariski sheaf. In
    particular $I_{\rm{crys}}(X/\lW)$ is rigid because all
    $I_{\rm{crys}}(U_i/\lW)$'s are Tannakian.
    
    Next we will show that the ring of endomorphisms of the
    trivial object $\sO_{X/\lW}\otimes_W K\in\Icrys(X/\lW)$ is a
    field. Let $\phi$ be a non zero endomorphism of
    $\sO_{X/\lW}\otimes_W K$. We must show that $\phi$ is
    invertible. Since $\Icrys(-/\lW)$ is a prestack we must show
    that its restriction $\phi_i$ over $U_i$ is invertible. As
    $\Icrys(U_i/\lW)$ is Tannakian, it is enough to show that
    $\phi_i\neq 0$. By contradiction assume that $\phi_i=0$.
    The functor $\Icrys(U_j/\lW)\arr
    \Icrys(U_{ij}/\lW)$ is exact, $K$-linear and tensorial, so
    it is faithful by \cite[2.10]{Del90}. Since $(\phi_i)_{|U_{ij}}=0$, we have  $\phi_j=0$ for all $j$
    by the connectedness of $X$. But this would imply that $\phi=0$.
    
    Hence the endomorphisms of $\sO_{X/\lW}\otimes_W K$ form a field. Let's denote it by $L$. A fiber functor for $\Icrys(X/\lW)$ is obtained composing a fiber functor of $\Icrys(U_i/\lW)$ with the tensor exact functor $\Icrys(X/\lW)\arr \Icrys(U_i/\lW)$.
    
    In conclusion $I_{\rm{crys}}(X/\lW)$ is a Tannakian category over $L$ (see \cite[1.9]{Del90}).

 Let now $f\colon Y\arr X$ be a map as in the statement of Theorem \ref{isocrystannaka} and denote by $ f_{\rm{crys}}^* \colon \Icrys(X/\lW) \arr \Icrys(Y/\lW)$
 the  pullback. 
 We know that $f_{\rm{crys}}^*$ is a tensor functor and we
 must show that it is exact. 
 
 Let $U\subseteq X$ and $V\subseteq
 Y$ be non-empty affine open subsets such that $f(V)\subseteq
 U$. Let $\lf\colon V_{\lW}\arr U_{\lW}$ be a lift of $V\arr U$ as in Lemma \ref{affine lift}, (3)  and $v\colon
 \Spec \mathbb{K} \arr V_{\lW}\times_\lW K$ be a geometric point. Using
 Theorem \ref{cry equivalent to quasi-nil}  we have a commutative diagram
     \[
  \begin{tikzpicture}[xscale=4.9,yscale=-1.1]
    \node (A0_0) at (0, 0.5) {$\Icrys(X/\lW)$};
    \node (A0_1) at (1, 0.5) {$\Icrys(Y/\lW)$};
    \node (A2_0) at (0, 2) {$\Icrys(U/\lW)$};
    \node (A2_1) at (1, 2) {$\Icrys(V/\lW)$};
    \node (A3_0) at (0, 3) {$\QNCf(U/\lW)\otimes K$};
    \node (A3_1) at (1, 3) {$\QNCf(V/\lW)\otimes K$};
    \node (A3_2) at (2, 3) {$\Vect(\mathbb{K})$};
    \path (A2_1) edge [->]node [rotate=-90,above] {$\scriptstyle{\simeq}$} (A3_1);
    \path (A2_0) edge [->]node [rotate=-90,above] {$\scriptstyle{\simeq}$} (A3_0);
    \path (A3_0) edge [->,bend left=50]node [auto,swap] {$\scriptstyle{(\lf \circ v)^*}$} (A3_2);
    \path (A0_0) edge [->]node [auto] {$\scriptstyle{f_{\rm{crys}}^*}$} (A0_1);
    \path (A0_1) edge [->]node [auto] {$\scriptstyle{}$} (A2_1);
    \path (A3_0) edge [->]node [auto] {$\scriptstyle{\lf^*}$} (A3_1);
    \path (A0_1) edge [->]node [auto] {$\scriptstyle{}$} (A3_2);
    \path (A2_0) edge [->]node [auto] {$\scriptstyle{}$} (A2_1);
    \path (A0_0) edge [->]node [auto] {$\scriptstyle{}$} (A2_0);
    \path (A3_1) edge [->]node [auto] {$\scriptstyle{v^*}$} (A3_2);
  \end{tikzpicture}.
  \]
Notice that $v^*\colon \QNCf(V/\lW)\otimes K\arr \Vect(\mathbb{K})$ is the composition
\[
\QNCf(V/\lW)\otimes K \arr \Conn(V_{\lW}\times_\lW K/K,
\Omega_{V_\lW}\otimes_WK)\arr \Vect(\mathbb{K})
\]
and it is a fiber functor by construction (or we can check it directly because modules in the middle category are locally free). The same happens to $U$ and $(\lf\circ v)^*$. In particular those arrows and therefore also $\Icrys(X/\lW)\arr \Vect(\mathbb{K}) ,\Icrys(Y/\lW) \arr \Vect(\mathbb{K})$ are exact and faithful. From this it follows that $\Icrys(X/\lW)\arr\Icrys(Y/\lW)$ is exact.

Let's conclude computing $\Icrys(X/\lW)$ for $X=\Spec k$. We
have $X_{\lW}=\lW$ and, in particular, $\Omega_{X_{\lW}}=\Omega_{ \lW/\lW}=0$. In particular $\QNCf(X/\lW)=\Conn(X/\lW)$ is just the category of finitely generated $\lW$-modules. Tensoring by $K$ one exactly gets $\Vect(K)$. 
\end{proof}

\section{Base change theorems for crystalline cohomology}\label{section:basechange}
In this section we generalise in various ways the classical base change theorem for crystalline cohomology proven in \cite[V, Proposition 3.5.2]{Ber74}, \cite[Theorem 7.8]{BO78}. Let $k$ be
a perfect field of characteristic $p>0$, and let $W$ be the ring of
Witt vectors of $k$. Set $\lW\coloneq \Spec W$. We denote by $\gamma$ the canonical
$\mathrm{PD}$-structure on $pW$, $K$ the fraction field of
$W$. Set $W_n\coloneq W/p^{n}W$ and $\lW_n\coloneq\Spec W_n$. We denote by $\gamma_n$ the
induced $\mathrm{PD}$-structure on $pW_n$

\begin{conv}\label{base change convention} Let $\lS=(\lS, I, \gamma)$ be a   PD-scheme such that $\lS$ is a $\lW$-scheme and $p\in I$. Denote by $S$ the zero locus $V(I)$ of $I$ inside $\lS$, which is a $k$-scheme because $p\in I$. Let $X$ be an $S$-scheme and denote by $g\colon X\arr \lS$ the structure map. Consider a commutative diagram
  \[
\xymatrix{
    X'\ar[r]^-h\ar[d]_{g_0'}\ar@/_4pc/[dd]_-{g}&X\ar[d]^{g_0}\ar@/^4pc/[dd]^-{g}\\
    S'\ar[r]^-{h_0}\ar[d]&S\ar[d]\\
\lS'\ar[r]^-{u}& \lS
}
\]
where $\lS'=(\lS', I', \gamma')$ is a PD-scheme, $S'=V(I')$, $X'$
is a scheme, $u$ is a PD-morphism and the top square  is cartesian. We assume moreover that all schemes are quasi-compact and $g_0$ is smooth, quasi-compact and quasi-separated. We consider a crystal of finite presentation $E\in \Crys(X/\lS)$.
\end{conv}

We define
\[
    \Gamma((X/\lS)_{\rm{crys}}, - ) \colon \Mod(\sO_{X/\lS}) \arr \Mod(\Hl^0(\odi \lS))
\]
as the functor of global sections (\cite[p. 5.5]{BO78}).  It is easy to see that  \[
 \Gamma((X/\lS)_{\rm{crys}},E)=\varprojlim_n \Gamma((X/\lS_n)_{\rm{crys}}, E_{|(X/\lS_n)_{\rm{crys}}})\in \Mod(\widehat{\Hl^0(\odi\lS)})
 \]
where $E$ is a sheaf of $\sO_{X/\lS}$-modules on $(X/\lS)_{\rm{crys}}$,
$\lS_n\coloneq \lS\times_{\lW}\lW_n$ and
$\widehat{\Hl^0(\odi\lS)}$ is the $p$-adic completion of
$\Hl^0(\odi\lS)$.

There is a canonical projection from the crystalline ringed
topos to the Zariski ringed topos \cite[\href{https://stacks.math.columbia.edu/tag/07IL}{Tag
07IL}]{stacks-project}
$$u_{X/\lS}\colon((X/\lS)^{\sim}_{\rm{crys}},\sO_{X/\lS})\rightarrow
({X^{\sim}_{\rm{Zar}}, g^{-1}\sO_{\lS}})$$
 where $g^{-1}\sO_{\lS}$ is the
pullback of $\sO_\lS$ along $g$. Concretely, we have 
\begin{enumerate}
    \item For $F\in(X/\lS)^\sim_\crys$ and $j\colon
        U\hookrightarrow X$ an open,
        $$(u_{X/\lS_*}(F))(U)=\Gamma((U/\lS)_\crys,F);$$
    \item For $G\in X^\sim_{\rm{Zar}}$ and
        $(U,T,\delta)\in(X/\lS)_\crys$,
    $$({u_{X/\lS}}^*(G))(U,T,\delta)=G(U).$$
\end{enumerate}
By composition we get a morphism of topoi
$$g_{X/\lS}\coloneq g\circ u_{X/\lS}\colon((X/\lS)^{\sim}_{\rm{crys}},\sO_{X/\lS})\rightarrow
({\lS^{\sim}_{\rm{Zar}}, \sO_{\lS}}).$$
Notice
that $$\Gamma((X/\lS)_{\rm{crys}}, - )=\Gamma \circ
g_{X/\lS*}(-)$$ where $\Gamma\colon
\Mod(\sO_{\lS}) \arr \Mod(\Hl^0(\odi \lS))$ is the functor
of global sections.

\begin{lem}\label{various bounded complexes}
  Assume that $p$ is nilpotent in $\odi {\lS}$ and $\lS$ is separated.  Let
  $E\in \Crys(X/\lS)$. Then $\rr g_{X/\lS*}(E)$ is quasi-isomorphic to a
  bounded complex of quasi-coherent $\odi \lS$-modules. If $\lS$ is affine,
  then this is quasi-isomorphic to the complex of quasi-coherent
  $\sO_\lS$-modules associated with any complex of
  $\Hl^0(\sO_\lS)$-modules representing $\rr\Gamma((X/\lS)_{\rm{crys}},E)$.
 \end{lem}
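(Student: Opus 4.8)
The plan is to reduce to the case where $\lS$ is affine and then identify crystalline cohomology with a de Rham complex attached to a local embedding.

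\emph{Reduction to $\lS$ affine.} Recall that $g_{X/\lS}=g\circ u_{X/\lS}$ with $u_{X/\lS}\colon (X/\lS)^{\sim}_{\crys}\arr X^{\sim}_{\Zar}$, so $\rr g_{X/\lS*}(E)\simeq\rr g_*\bigl(\rr u_{X/\lS*}(E)\bigr)$. Since $\rr g_{X/\lS*}$ commutes with restriction to an open of $\lS$, and being quasi-isomorphic to a bounded complex of quasi-coherent $\odi\lS$-modules is Zariski-local on $\lS$, I may assume $\lS=\Spec R$ with $p^{m}=0$ in $R$ for some $m$. In that situation, once I know that $\rr g_{X/\lS*}(E)$ has quasi-coherent cohomology sheaves and is bounded, it automatically equals the sheafification of $\rr\Gamma(\lS,\rr g_{X/\lS*}E)=\rr\Gamma((X/\lS)_{\crys},E)$ (by \ref{directlimitofsites} and the definition of the global sections functor, since quasi-coherent sheaves on $\Spec R$ are acyclic), and that complex of $R$-modules is then bounded; so the last assertion of the lemma will follow from the first. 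Hence it suffices to prove that $\rr u_{X/\lS*}(E)$ is quasi-isomorphic to a bounded complex of quasi-coherent $\odi X$-modules, with amplitude bounded in terms of $X$ only.

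\emph{The local de Rham model.} This last claim is Zariski-local on $X$, so let $U=\Spec C\subseteq X$ be affine. By Convention \ref{base change convention}, $X\arr S$ is smooth and quasi-compact, so $C$ is of finite type over $R$; picking generators yields a closed immersion $U\hookrightarrow\lX:=\A^{n}_{\lS}$ with $\lX$ smooth over $\lS$. Let $D$ be the PD-envelope of $U$ in $\lX$ compatible with $(I,\gamma)$. Because $p$ is nilpotent, the divided-power ideal of $D$ is locally nilpotent, so $(U\hookrightarrow D)$ is an object of $(U/\lS)_{\crys}$; let $E_D$ denote the $\odi D$-module it defines. By Theorem \ref{cry equivalent to quasi-nil} together with the comparison between crystalline and de Rham cohomology (\cite[\S7]{BO78}, \cite{stacks-project}), $\rr u_{U/\lS*}(E|_U)$ is computed by the de Rham complex $E_D\otimes_{\odi D}\Omega^{\bullet}_{D/\lS,\bar\gamma}$. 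This sits in degrees $0,\dots,n$, and since $\Omega^{i}_{D/\lS,\bar\gamma}\simeq\Omega^{i}_{\lX/\lS}\otimes\odi D$ is finite locally free over $\odi D$, each term is a quasi-coherent $\odi U$-module. Covering $X$ by finitely many such affine opens (possible as $X$ is quasi-compact) gives a bound on the amplitude independent of the open.

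\emph{Globalisation and conclusion.} The cohomology sheaves of $\rr u_{X/\lS*}(E)$ are therefore quasi-coherent and vanish outside a fixed degree range, and $X$ is quasi-compact and quasi-separated; so $\rr u_{X/\lS*}(E)$ is quasi-isomorphic to a genuine bounded complex $\shF^{\bullet}$ of quasi-coherent $\odi X$-modules. Applying $\rr g_*$ with $\lS$ affine and $g$ quasi-compact and quasi-separated, $\rr g_{X/\lS*}(E)\simeq\rr g_*\shF^{\bullet}$ has quasi-coherent, bounded cohomology, hence is quasi-isomorphic to a bounded complex of quasi-coherent $\odi\lS$-modules; and by the first step this complex is the sheafification of $\rr\Gamma((X/\lS)_{\crys},E)$.

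I expect the main obstacle to be making the de Rham comparison of the second step rigorous for an \emph{arbitrary} crystal $E$ — not assumed flat — in this $p$-nilpotent, possibly non-Noetherian setting, and checking carefully that the PD-envelope $D$ really yields an object of the crystalline site; this is precisely where the hypothesis that $p$ is nilpotent enters, since it forces the divided-power ideal of $D$ to be locally nilpotent. Once these points are settled, boundedness comes for free from the finiteness of the cover and of $\dim(\A^{n}_{\lS}/\lS)$, and the remaining steps are formal properties of quasi-coherent cohomology on quasi-compact quasi-separated schemes.
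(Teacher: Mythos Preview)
Your argument is essentially correct and reaches the same conclusion, but it follows a different route from the paper's proof. The paper proceeds in two short strokes: it invokes \cite[Theorem~7.6]{BO78} to get that $\rr g_{X/\lS*}(E)$ has bounded quasi-coherent cohomology, then \cite[Corollary~5.5]{BokNee93} to upgrade this to an actual bounded complex of quasi-coherent sheaves; the affine claim is then a degenerate hypercohomology spectral sequence. You instead \emph{reprove} the content of \cite[Theorem~7.6]{BO78} by hand, via the crystalline--de~Rham comparison on affine opens of $X$, and only then appeal (implicitly) to the same Bökstedt--Neeman step in your globalisation. So your approach is more self-contained but considerably longer; the paper simply treats Berthelot--Ogus's theorem as a black box.

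A few small points. First, the reduction to $\lS$ affine at the outset is unnecessary: the de~Rham comparison and the quasi-coherence of the $\Omega^i_{D/\lS,\bar\gamma}$ already work over any base, so you can run the second step globally. Second, the assertion that ``being quasi-isomorphic to a bounded complex of quasi-coherent sheaves is Zariski-local'' is not literally true as a matter of gluing complexes; what is local is having bounded quasi-coherent \emph{cohomology}, and you then need the Bökstedt--Neeman type result to lift this to an actual bounded complex. You do in fact use exactly this in your globalisation step, so nothing is lost---just be aware the earlier sentence is informal. Third, the reference to \ref{directlimitofsites} is superfluous: since $p$ is nilpotent the inverse system is eventually constant. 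Finally, your worry about the PD-envelope is well-placed but easily resolved in this $p$-nilpotent setting exactly as you indicate.
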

\begin{proof}
 The complex $\rr g_{X/\lS*}(E)$ is cohomologically bounded and has
 quasi-coherent cohomology thanks to \cite[Theorem 7.6]{BO78}. By a standard
 argument it is quasi-isomorphic to a bounded complex of quasi-coherent sheaves
 $B^{\bullet}$ \cite[Corollary 5.5]{BokNee93}. If $\lS$ is affine the
 degenerate spectral sequence (see \cite[5.7.9]{Wei94})
 \[
 E_2^{pq}=\Hl^p((\rr^q\Gamma)(B^{\bullet})) \Rightarrow \rr^{p+q}\Gamma(B^{\bullet})
 \]
 tells us that $\Gamma(B^{\bullet})\simeq\rr\Gamma(B^{\bullet})\simeq\rr\Gamma((X/\lS)_{\rm{crys}},E)$ as desired.
\end{proof}

\begin{rmk}\leavevmode\label{adjgxsandgamma}
\begin{enumerate}[label=(\alph*)]
\item\label{adjgxs}
If $p$ is nilpotent in $\odi \lS$ we define a map 
 \begin{equation}\label{basechange}
\dl u^*\rr g_{X/\lS*}(E)\rightarrow \rr g'_{X'/\lS'*}h_{\rm{crys}}^*(E)
 \end{equation}
in $D(\lS_{\Zar}^\sim)$ as follows. Applying adjunction to the canonical map $\dl h^*_{\rm{crys}}(E)\rightarrow h^{*}_{\rm{crys}}(E)$
we obtain a map
$$E\rightarrow \rr h_{\rm{crys}*}( h^*_{\rm{crys}}(E)).$$
Applying $\rr g_{X/\lS*}$ and using $g_{X/\lS}\circ h_{\mathrm{crys}*}=u_*\circ g'_{X'/\lS'}$ (see \cite[\href{https://stacks.math.columbia.edu/tag/07MH}{Tag 07MH}]{stacks-project}) we get
$$\rr g_{X/\lS*}(E)\rightarrow \rr u_*\rr g'_{X'/\lS'*}(
h^*_{\rm{crys}}(E)).$$ 
The map \eqref{basechange} is obtained applying adjunction again, which can be
done because $\rr g_{X/\lS*}(E)$ is bounded above thanks to Lemma \ref{various bounded complexes}. 
\item\label{adjGamma}
If $\lS$ and $\lS'$ are affine, but $p$ is not necessarily nilpotent in
$\sO_\lS$, we can still define a map
\begin{equation}\label{basechange affine}
 \dl u^*\rr \Gamma((X/\lS)_{\rm{crys}},E) \rightarrow \rr \Gamma((X'/\lS')_{\rm{crys}},h_{\rm{crys}}^*E)
 \end{equation}
 in $D^-(\Hl^0(\odi{\lS'}))$. The construction is the same and it is possible
 since $\rr \Gamma((X/\lS)_{\rm{crys}},E)$ is bounded above as we will prove in
 Corollary \ref{RGamma Rlim}.

\end{enumerate}
\end{rmk}

\begin{defn}
 Let $\mathscr{A}$ be an abelian category, $p$ a given prime and $N\in \N$. A map of objects of $\mathscr{A}$ is  a $p^N$-isogeny if its kernel and its cokernel are killed by $p^N$, it is an isogeny if it is a $p^r$-isogeny for some $r\in \N$.
\end{defn}
\begin{defn}
 Let $\mathscr{C}$ be a $W$-linear category and $E\in \mathscr{C}$. Given $n\in
 \N$ we say that $E$ is $W_n$-flat if $p^n$ kills $E$ and, for all $0\leq j\leq
 n$, the quotient $E/p^jE$ exists and the map
 \[
 E/p^jE\arrdi{p^{n-j}} E
 \]
 is injective.
 We say that $E$ is $W$-flat or $p$-torsion free if $E\arrdi p
 E$ is injective in $\mathscr{C}$. 
\end{defn}
\begin{rmk}
 If $\mathscr{C}=\Mod(W)$ then the notion of flatness just introduced and the classical one agrees. This is an easy consequence of testing flatness on ideals.
\end{rmk}

\begin{lem}\label{restricting ptorsion free}
 In the hypothesis of \textbf{Setting} \ref{base change convention}, if $E\in
 \Crys(X/\lS)$ is $p$-torsion free, then $E_n = E_{|(X/\lS_n)_{\rm{crys}}}\in \Crys(X/\lS_n)$ is $W_n$-flat. 
 \end{lem}
\begin{proof}
Indeed,  since $\Crys(X/\lS)$ satisfies Zariski descent, we can assume
 that $X$ and $\lS$ are affine. We apply Theorem \ref{cry equivalent to
 quasi-nil} twice. The crystal $E$ corresponds to a module $M$
 with an integrable connection
 over $\Spec B$, where $B$ is a $p$-adically complete $W$-algebra and $\Spec B$ is a lift of $X$. The $B$-module $M$ is $p$-torsion free,  thus
 $W$-flat, so its restriction  $M_n\coloneq M\otimes_W W_n$ is
 $W_n$-flat. Therefore, the crystal $E_n$, which corresponds to $M_n$, is
 also $W_n$-flat. 
\end{proof}

\begin{rmk} Let $E\in \Crys(X/\lS)$ be $p$-torsion free.  It is
    not true that the map $E\xrightarrow{p}E$  is injective in
    the ringed topos $((X/\lS)^{\sim}_{\rm{crys}},\sO_{X/\lS})$. For example, we can look at the trivial
    crystal $\sO_{\Spec k/\lW}$ on $(\Spec k/\lW)_\crys$: the
    map  $W_1\xrightarrow{p} W_1$ at the thickening $\Spec
    k\hookrightarrow \lW_1$ is not injective.   
\end{rmk}

\begin{lem}\label{flat is torsion free}
    In the hypothesis of \textbf{Setting} \ref{base change convention}, if $\lS$
    is flat over $\lW$, and
    if $E\in\Crys(X/\lS)$ is a flat crystal \textup{\cite[p. 7.10]{BO78}}, then
$E$ is $p$-torsion free in  $\Crys(X/\lS)$. 
\end{lem}
\begin{proof}
Indeed, to see
this we may
assume $X$ and $\lS$ are affine. 
Then by Theorem \ref{cry equivalent to
 quasi-nil},  $E$ corresponds to a flat module $M$ equipped with
 an integrable connection over the
 flat $\lS$-lift $\Spec B$ of $X$, where $B$ is a $p$-adically complete $W$-algebra. Since $\lS$ is flat over $\lW$, $M$
 is $W$-flat, hence it is $p$-torsion free in $\QNCf(X/\lS)$. Thus $E$ is $p$-torsion
 free in $\Crys(X/\lS)$ as well by Theorem \ref{cry equivalent to
 quasi-nil}.
\end{proof}

\subsection{The case of a base killed by a power of $p$}\label{subsection:killed_by_p}

 The next theorem deals with the situation in Remark \ref{adjgxsandgamma} \ref{adjgxs} and the map in \eqref{basechange}.

\begin{thm}\label{Toninibasechange}
In the situation of \textbf{Setting} \ref{base change convention}, assume moreover that $p$ is nilpotent in $\odi {\lS}$. The following hold. 
\begin{enumerate}[label=\textup{(}\alph*\textup{)}]
\item \label{unifbounded}
There exists $r\in \N$, which depends only on
 $X\xrightarrow{g_0} S$, such that for all open  $U$ of $\lS$ and $i>r$ we have 
  $$
  \rr^i
  (g_{|g^{-1}(U)})_{g^{-1}(U)/U*}(E_{|(g^{-1}(U)/U)_{\rm{crys}}})=0.
  $$

\item \label{uflatEfree}
The map \eqref{basechange} is an isomorphism if $u$ is flat or
$E$ is a flat crystal \textup{\cite[p. 7.10]{BO78}}.
\item \label{EWnflatetc}
The map  \eqref{basechange} is an isomorphism if $E$ is
$W_n$-flat, $\lS$ is a flat $\lW_n$-scheme and if there exists a map
of schemes $u_0\colon \lZ\to\lW_n$ such that $u$ is the base change of
$u_0$ along $\lS\to\lW_n$.
\item \label{almostisoW}
    Suppose that $S$ is smooth of finite type over $k$. Let
    $E_{\lW}\in\Crys(X/\lW)$ and set $E=(E_{\lW})_{|(X/\lS)_\crys}$.
    Then there exists $N\colon \Z\arr \N$, independent of the closed immersion $S\hookrightarrow \lS$, such that the $i$-th cohomology of the map
    \eqref{basechange} is a $p^{N_i}$-isogeny in the ringed
    topos  $(\lS^{\prime\,\sim
    }_{\mathrm{Zar}},\sO_{\lS'})$.
 \end{enumerate}
\end{thm}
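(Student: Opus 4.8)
\emph{Overall approach.} The plan is to run Berthelot's proof of the classical base change theorem (\cite[Theorem 7.8]{BO78}) — cohomological descent to the affine case plus the de Rham complex of a local lift, via \ref{cry equivalent to quasi-nil} — and to graft onto it, for the new statements (c) and (d), a $\Tor$-computation and a torsion dévissage controlled by a spectral-sequence estimate. For (a), I would argue locally on $\lS$: after replacing $X\arr S$ by $f^{-1}(U)\arr S\cap U$ for an affine open $U\subseteq\lS$, note that $I$ is locally nilpotent (a PD-ideal with $p$ nilpotent), so a fixed finite affine open cover of $X$, say by $c$ opens of relative dimension $\le d$ over $S$, together with all of its finite intersections, lifts to smooth $U$-schemes; the crystalline cohomology of each piece is then the de Rham complex of the lift, which has length $\le d$, so it vanishes in degrees $>d$, and the \v{C}ech-to-derived-functor spectral sequence for the cover yields $\rr^i=0$ for $i>r:=d+c-1$, a bound depending only on $X\arr S$.

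\emph{Parts (b) and (c).} Here cohomological descent reduces to the case $\lS$ affine and to the map of de Rham complexes $\dl u^*\bigl(M\otimes_{\sO_\lD}\Omega_\lD^\bullet\bigr)\arr M'\otimes_{\sO_{\lD}}\sO_{\lD'}\otimes\Omega_{\lD}^\bullet$ attached to a local lift; since the target is exactly the underived $u^*$ of the source computed termwise, it suffices that $\dl u^*$ and $u^*$ agree on each $M\otimes_{\sO_\lD}\Omega_\lD^j$. This holds when $u$ is flat, and it holds when $E$ — hence $M$, hence $M\otimes\Omega_\lD^j$ — is locally free, because $\lX$ is smooth over $\lS$ so $\sO_\lD$ is flat over $\sO_\lS$ and a locally free $\sO_\lD$-module is $\sO_\lS$-flat; this gives (b). For (c), the module translation of the hypothesis is that each $M\otimes\Omega_\lD^j$ is $\lW_n$-flat, hence $\lW_n$-flat as a $\lW_n$-module since $\sO_\lS$ is flat over $\lW_n$; writing $u$ as the base change of some $T\arr\lW_n$, flat base change for $\Tor$ gives $\Tor^{\sO_\lS}_i(M\otimes\Omega_\lD^j,\sO_{\lS'})\cong\Tor^{\lW_n}_i(M\otimes\Omega_\lD^j,\sO_T)=0$ for $i>0$, and we conclude as before.

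\emph{Part (d): dévissage.} Since $S$ is smooth over $k$ and $g_0$ is smooth, $X$ is smooth over $k$, so on a finite affine open cover of $X$ admitting smooth $\lW$-lifts the crystal $E_W$ corresponds (via \ref{cry equivalent to quasi-nil}) to finitely presented modules over Noetherian $p$-torsion-free rings whose $p$-power torsion is annihilated by a single power $p^{N_0}$ with $N_0=N_0(E_W)$. As in the proof of \ref{restricting ptorsion free}, this torsion submodule is stable under the connection (if $p^{N_0}x=0$ then $p^{N_0}\nabla(x)=0$ and $\Omega$ is flat) and finitely presented, hence globalises to a subcrystal, producing an exact sequence
\[
0\to E_t\to E_W\to E_f\to 0
\]
in $\Crys(X/\lW)$ with $p^{N_0}E_t=0$ and $E_f$ $\lW$-flat. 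Restricting to $\lS$, a right-exact operation, gives $0\to Q\to E|_{(X/\lS)_\crys}\to E_f|_{(X/\lS)_\crys}\to 0$ with $Q$ a quotient of $E_t|_{(X/\lS)_\crys}$, so $p^{N_0}Q=0$, and likewise over $\lS'$. Consequently every $\rr^i g_{X/\lS*}(Q)$ and every $\rr^i g'_{X'/\lS'*}(h_{\crys}^*Q)$ is killed by $p^{N_0}$ and vanishes for $i>r$ by (a); moreover, since $p^{N_0}$ annihilates every $\Tor$ out of a $p^{N_0}$-killed module, the hyper-$\Tor$ spectral sequence shows each cohomology object of $\dl u^*\rr g_{X/\lS*}(Q)$ is killed by $p^{(r+1)N_0}$. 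A diagram chase in the commutative square relating the base-change map \eqref{basechange} for $E$ to the one for $E_f$ — whose two comparison maps are isogenies with exponents bounded in terms of $N_0$ and $r$ — then reduces the assertion for $E_W$ to the same assertion for the $\lW$-flat crystal $E_f$.

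\emph{Part (d): the flat case and the main obstacle.} It remains to treat $E_f$. I would again reduce by cohomological descent to the canonical map $\dl u^*P^\bullet\arr u^*P^\bullet$ for the de Rham complex $P^\bullet=M_f\otimes_{\sO_\lD}\Omega_\lD^\bullet$ of a local lift, where $M_f$ is now a pullback of a $p$-torsion-free finitely presented module over the $\lW$-lift. By the hyper-$\Tor$ spectral sequence, the kernel and cokernel of $\mathcal H^i(\dl u^*P^\bullet)\arr\mathcal H^i(u^*P^\bullet)$ are assembled out of $\Tor^{\sO_\lS}_i(\mathcal H^j(P^\bullet),\sO_{\lS'})$ for $i\ge1$, so the whole matter comes down to bounding, uniformly, the power of $p$ killing these — equivalently, bounding the $p$-torsion of the de Rham cohomology sheaves $\mathcal H^j(P^\bullet)$ — using that $M_f$ is $p$-torsion free over the $\lW$-lift and that $\sO_\lD$ is flat over $\sO_\lS$ with $p$ nilpotent in $\sO_\lS$. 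I expect this uniform torsion/$\Tor$ estimate, together with the bookkeeping needed to propagate a termwise $p^N$-isogeny through the (bounded, by (a) and finiteness of the cover) cohomological-descent spectral sequence back to its abutment, to be the technical crux and the one place where the absence of a flat or locally free lattice genuinely forces extra work — this is the "spectral sequence to find a uniformly large $N$" alluded to in the introduction; everything else in the argument is formal.
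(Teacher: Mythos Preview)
Your treatment of parts (a), (b), and (c) is essentially the paper's own: cohomological descent to the affine case, comparison with the de Rham complex $Q^\bullet=M\otimes_B\Omega^\bullet_{B/A}$ of a smooth lift, and then the observation that the base change map becomes $\dl u^*Q^\bullet\to u^*Q^\bullet$, which is an isomorphism when each $Q^i$ is $\sO_\lS$-flat (for (b)) or $\lW_n$-flat together with the $\Tor$-transfer along the flat map $\lW_n\to\sO_\lS$ (for (c)). This is correct.

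For part (d) there is a genuine gap. Your d\'evissage into a $p^{N_0}$-torsion subcrystal and a $p$-torsion-free quotient is not wrong, but it is unnecessary and does not avoid the hard step. After the reduction you are still trying to bound, uniformly in the PD-thickening $\lS$ and the map $u$, a power of $p$ annihilating the higher $\Tor^{\sO_\lS}$ that obstruct $\dl u^*P^\bullet\to u^*P^\bullet$. You propose to extract this from ``bounding the $p$-torsion of $\mathcal H^j(P^\bullet)$'', but $p$ is nilpotent in $\sO_\lS$, so every $\sO_\lS$-module is $p$-torsion; $p$-torsion of $\mathcal H^j(P^\bullet)$ carries no information and certainly does not control $\Tor^{\sO_\lS}_i(\mathcal H^j(P^\bullet),\sO_{\lS'})$. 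You leave this as ``I expect\ldots'', and indeed there is no evident way to carry it out along these lines.

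The paper's argument is different and avoids cohomology of $P^\bullet$ altogether. One uses the \emph{other} hyper-$\Tor$ spectral sequence
\[
E_2^{xy}=\Hl^x\bigl((L^yu^*)(Q^\bullet)\bigr)\ \Longrightarrow\ \Hl^{x+y}(\dl u^*Q^\bullet),
\]
whose off-diagonal terms involve $\Tor^A_y(Q^i,A')$ of the \emph{terms} $Q^i$, not of their cohomology; the edge map $\Hl^n(\dl u^*Q^\bullet)\to E_2^{n,0}=\Hl^n(u^*Q^\bullet)$ is exactly the map to be controlled. The uniform bound on $E_2^{xy}$ for $y\neq0$ comes from Lemma~\ref{projective p inverted}: lifting $S$ and $X$ to $p$-adically complete smooth $\lW$-algebras $\overline A,\overline B$, the crystal $E_W$ corresponds to $(\overline M,\overline\nabla)\in\Conn(\overline B/\lW,\Omega_{\overline B})$, and since $(\overline B[1/p],\Der)$ is a locally simple differential ring, $\overline M[1/p]$ is projective. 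This produces $\alpha\colon\overline B^l\to\overline M$, $\beta\colon\overline M\to\overline B^l$ with $p^a(\alpha\beta-p^b\id_{\overline M})=0$, where $a,b$ depend only on $E_W$. For any additive functor $F$ with $F(\overline B)=0$ one then has $p^{a+b}F(\overline M)=0$; taking $F(-)=\Hl^x\bigl(\dl^y u^*(-\otimes_{\overline B}\Omega^i_{B/A})\bigr)$ for $y\neq0$ (which kills $\overline B$ because $\Omega^i_{B/A}$ is $A$-flat) gives $p^{a+b}E_2^{xy}=0$ for $y\neq0$, uniformly in $\lS,\lS',u$. The isogeny estimate for the edge map is then the elementary Lemma~\ref{almost degenerate spectral sequence}, and the propagation back through cohomological descent is Lemma~\ref{toninitoplemma}. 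No d\'evissage and no analysis of the cohomology of the de Rham complex are needed.
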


Before giving the proof of this theorem we prove some preliminary results.

 \begin{lem}\label{toninitoplemma}  
 Let $\pi\colon \mathscr{A}\arr \mathscr{B}$ be a left exact
 functor between abelian categories. Assume that $\mathscr{A}$
     has enough injectives and that there exists $n_0>0$ such that $\rr^n\pi=0$ for all $n\geq
         n_0$, so that, by
         \textup{\cite[\href{https://stacks.math.columbia.edu/tag/07K7}{Tag
         07K7}]{stacks-project}}, there is a functor $\rr \pi\colon D(\mathscr{A})\arr D(\mathscr{B})$.
  Let also $\alpha\colon C\arr D$ be a map in $D(\mathscr{A})$ and $N\colon
  \Z\arr \N$ a function such that $\Hl^i(\rr\pi(\alpha))$ is a $p^{N_i}$-isogeny and $N_i=0$ for $i\gg 0$. 
 
 Then there exists $N'\colon \Z\arr \N$, which depends only on
 $N$ and $n_0$, such that $\Hl^i(\rr\pi(\alpha))$ is a
 $p^{N_i'}$-isogeny and $N'_i=0$ for $i\gg 0$.
\end{lem}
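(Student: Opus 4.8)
The plan is to reduce everything to a statement about a single complex (the cone of $\alpha$) whose cohomology is killed by explicit powers of $p$, and then to run the second hyperhomology spectral sequence of $\pi$. First I would complete $\alpha$ to a distinguished triangle $C\xrightarrow{\alpha}D\to F\to C[1]$ in $D(\mathscr{A})$, where $F$ is a cone of $\alpha$. The long exact cohomology sequence presents each $\Hl^i(F)$ as an extension of $\Ker\Hl^{i+1}(\alpha)$ by $\Coker\Hl^i(\alpha)$, hence $\Hl^i(F)$ is killed by $p^{M_i}$ with $M_i:=N_i+N_{i+1}$; moreover for $i\gg 0$ the map $\Hl^i(\alpha)$ is an isomorphism, so $\Hl^i(F)=0$ and $F\in D^-(\mathscr{A})$. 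Since $\rr\pi\colon D(\mathscr{A})\to D(\mathscr{B})$ is a triangulated functor, $\rr\pi(F)$ is a cone of $\rr\pi(\alpha)$; reading the long exact sequence of this triangle backwards, it suffices to produce $M'\colon\Z\to\N$, depending only on $M$ (equivalently on $N$), with $M'_i=0$ for $i\gg 0$, such that $\Hl^i(\rr\pi(F))$ is killed by $p^{M'_i}$. Indeed then $\Ker\Hl^i(\rr\pi(\alpha))$ is a quotient of $\Hl^{i-1}(\rr\pi F)$ and $\Coker\Hl^i(\rr\pi(\alpha))$ is a sub of $\Hl^i(\rr\pi F)$, so $\Hl^i(\rr\pi(\alpha))$ is a $p^{N'_i}$-isogeny with $N'_i:=\max(M'_{i-1},M'_i)$, and $N'_i=0$ for $i\gg 0$.

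For the remaining step fix $d\in\N$ with $\rr^n\pi=0$ for $n>d$ (this exists by hypothesis). I would invoke the second hyperhomology spectral sequence of the functor $\pi$ applied to $F\in D^-(\mathscr{A})$ (e.g.\ \cite[5.7.9]{Wei97}):
\[
E_2^{p,q}=\rr^p\pi(\Hl^q(F))\ \Longrightarrow\ \Hl^{p+q}(\rr\pi(F)).
\]
It converges since $F$ is bounded above and $E_2^{p,q}=0$ outside $0\le p\le d$, so in each total degree only finitely many terms are nonzero. Because $\Hl^q(F)$ is killed by $p^{M_q}$ and the additive functor $\rr^p\pi$ sends $p^{M_q}\cdot\id$ to $p^{M_q}\cdot\id$, the term $E_2^{p,q}$ is killed by $p^{M_q}$, hence so is every subquotient $E_r^{p,q}$ and in particular $E_\infty^{p,q}$. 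Since $\Hl^n(\rr\pi(F))$ carries a finite filtration with graded pieces $E_\infty^{p,n-p}$ for $0\le p\le d$, it is killed by $p^{M'_n}$ with
\[
M'_n:=\sum_{j=0}^{d}M_{n-j}=\sum_{j=0}^{d}\bigl(N_{n-j}+N_{n+1-j}\bigr),
\]
which depends only on $N$ and the fixed $d$, and vanishes once $n$ is large enough that all $N_{n-d},\dots,N_{n+1}$ vanish. Substituting into the formula $N'_i=\max(M'_{i-1},M'_i)$ from the first paragraph completes the proof.

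\textbf{On the main difficulty.} There is no deep obstacle here; the work is bookkeeping, and the one point requiring care is to keep the exponent bounds \emph{uniform in $F$}, i.e.\ functions of $N$ and of $\pi$ only, which the explicit formula for $M'_n$ makes transparent (the dependence on $\pi$ is only through its cohomological dimension $d$, which is fixed throughout). The only slightly delicate technical issue is that $F$ is in general unbounded below, so one should be a little careful about convergence of the spectral sequence; if one prefers to avoid this, when computing $\Hl^n(\rr\pi(F))$ one may first replace $F$ by the truncation $\tau_{\ge n-d}F$, since the triangle $\tau_{<n-d}F\to F\to\tau_{\ge n-d}F$ and $\rr\pi(\tau_{<n-d}F)\in D^{<n}(\mathscr{B})$ give $\Hl^n(\rr\pi F)\cong\Hl^n(\rr\pi\,\tau_{\ge n-d}F)$, and then $\tau_{\ge n-d}F$ is a bounded complex for which one can either use the (now manifestly convergent) spectral sequence or argue by a short induction on the number of nonzero cohomology objects via the canonical truncation triangles; either way one lands on the same bound $M'_n$.
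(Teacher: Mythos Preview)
Your proof is correct and follows essentially the same strategy as the paper: reduce to the cone $F$ of $\alpha$, then bound $\Hl^i(\rr\pi F)$ using that $\Hl^q(F)$ is killed by $p^{M_q}$ and that $\pi$ has finite cohomological dimension. The only difference is packaging: you invoke the hyperhomology spectral sequence $E_2^{p,q}=\rr^p\pi(\Hl^q F)\Rightarrow\Hl^{p+q}(\rr\pi F)$ and read off the explicit bound $M'_n=\sum_{j=0}^d M_{n-j}$, whereas the paper runs the equivalent argument by hand, using the truncation triangles $\Hl^n(F)[-n]\to\tau_{\ge n}F\to\tau_{\ge n+1}F$ together with reverse induction to bound $\rr\pi(\tau_{\ge n}F)$, and then checks directly that $\Hl^i(\rr\pi F)\to\Hl^i(\rr\pi\,\tau_{\ge n}F)$ is an isomorphism for $n\ll 0$ using acyclic representatives. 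Your spectral-sequence route is a bit cleaner and yields an explicit formula for $N'$; the paper's route avoids citing the spectral sequence but has to do the convergence step (your ``delicate technical issue'') by hand. Either way the dependence on $\pi$ is only through its cohomological dimension, which is fixed, so ``depends only on $N$'' is satisfied in both arguments.
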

\begin{proof}
Applying $\rr \pi$ to the exact triangle of the cone of $\alpha$ and taking cohomology we get a long exact sequence 
\[
    \cdots \rightarrow \rr^i \pi C \rightarrow	 \rr^i\pi
    D\rightarrow \rr^{i}\pi (\mathrm{Cone}(\alpha))\rightarrow
    \rr^{i+1}\pi C \rightarrow \cdots.
\]
From this we are reduced to show that if $G \in D(\mathscr{A})$ 
satisfies that $\Hl^i(G)$ is killed by $p^{N_i}$, 
then we can find $N'$ as in the statement such that $\rr^i \pi
G$ is killed by $p^{N_i'}$ and $N_i'=0$ for all $i\gg 0$.

We consider the truncation
\[
    \tau_{\geq n}(G)\coloneqq\cdots\rightarrow 0\rightarrow
    (G^n/\im(d^{n-1}))\xrightarrow{d^n}
    G^{n+1}\xrightarrow{d^{n+1}} G^{n+2}\rightarrow \cdots.
\]
By \cite[\href{https://stacks.math.columbia.edu/tag/08J5}{Tag
08J5}]{stacks-project}, we have an exact triangle
\[
\Hl^n(G)[-n]\arr \tau_{\geq n}(G)\rightarrow \tau_{\geq n+1}(G)\arr \Hl^n(G)[-n+1]
\]
hence the exact triangle
\[
\rr\pi(\Hl^n(G)[-n]) \arr \rr\pi(\tau_{\geq n}(G))\rightarrow \rr\pi(\tau_{\geq
n+1}(G))\arr \rr\pi(\Hl^n(G)[-n+1]).
\]
We show that there exists $f\colon \Z\to \N$ such that the multiplication by $p^{f_n}$ induces $0$ on all cohomologies of $\rr\pi(\tau_{\geq n}(G))$ and $f_n=0$ for $n\gg 0$.

For $n\in \N$ satisfying $N_m=0$ for $m\geq n$ we can set $f_n=0$. Indeed in this case $\tau_{\geq n}G$ (and therefore also
$\rr\pi(\tau_{\geq n}G)$) is acyclic by assumption.

Moreover $\Hl^n(G)$ and, by linearity, all $\rr\pi(\Hl^n(G)[u])$
($u\in \Z$) are killed by $p^{N_n}$ in the derived category.
We can therefore define $f\colon \Z\to \N$ working by reverse induction on $\Z$.

Next we show that
\[
\rr^i\pi(G)\arr \rr^i\pi(\tau_{\geq n}G)
\]
is an isomorphism for $n< i-n_0$ so that the function $N'_i=f_{i-n_0-1}$ satisfies the requests in the statement.

By \cite[\href{https://stacks.math.columbia.edu/tag/07K7}{Tag 07K7}]{stacks-project} we can assume that $G$ is made by right acyclic objects for $\pi$. For all $n\in \N$ we have an exact sequence of complexes
\[
0\arr \sigma_{\geq n+1}G\arr \tau_{\geq n}G \arr (G_n/\Imm(d_{n-1}))[-n]\arr 0
\]
where $\sigma_{\geq n+1}G$ denotes  truncation. Since
$\rr^q\pi=0$ for $q\geq n_0$, we can conclude that
$\rr^i\pi(\sigma_{\geq n+1}G)\arr \rr^i\pi(\tau_{\geq n}G)$ is
an isomorphism for $n\leq i-n_0$. Since $\rr\pi(G)=\pi(G)$ and $\rr\pi(\sigma_{\geq n+1}G)=\pi(\sigma_{\geq n+1}G)$ we can also conclude that
\[
\rr^i\pi(\sigma_{\geq n+1}G)\arr \rr^i\pi(G)
\]
is an isomorphism for $n+1  <i$. 
\end{proof}
 
 \begin{lem}\label{almost degenerate spectral sequence}
  Let $\mathscr{A}$ be an abelian category, $l,N\in \N$ and
  \[
  E^{uv}_2\Rightarrow H^{u+v}
  \]
  be a convergent spectral sequence in $\mathscr{A}$.
  
  If $E^{uv}_2=0$ for $v>0$ or $u<0$ or $u>l$, then there is an associated map 
   \[
  \omega_n \colon H^n \arr E^{n0}_\infty \arr E^{n0}_2
  \]
  and, if $p^N$ kills all $E^{uv}_2$ for $v\neq 0$, this map is
  a $p^{N(l+1)}$-isogeny.
  
   If $E^{uv}_2=0$ for $u>0$ or $v<0$ or $v>l$, then there is an associated map 
   \[
  \omega_n \colon E_2^{0n} \arr E_\infty^{0n} \arr H^n
  \]
  and, if $p^N$ kills all $E^{uv}_2$ for $u\neq 0$, this map is
  a $p^{N(l+1)}$-isogeny.
 \end{lem}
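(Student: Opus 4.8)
The plan is to peel each edge map $\omega_n$ apart as a composite of a surjection followed by an injection, identify its kernel and cokernel with the outcomes of two finite filtrations whose graded pieces are subquotients of $E_2$-terms lying off the relevant axis, and then count steps: each step contributes a factor $p^N$, and there will be at most $l+1$ of them, giving a $p^{(l+1)N}$-isogeny.

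\emph{First case} ($E_2^{uv}=0$ unless $0\le u\le l$ and $v\le 0$). First I would record two facts. (i) No nonzero differential can hit the spot $(n,0)$, since the source of $d_r$ into it is $(n-r,r-1)$, which has second coordinate $\ge 1$; hence $E_\infty^{n0}\subseteq\dots\subseteq E_3^{n0}\subseteq E_2^{n0}$, and the successive quotient $E_r^{n0}/E_{r+1}^{n0}$ is isomorphic to $\Imm(d_r\colon E_r^{n0}\arr E_r^{n+r,1-r})$, a subquotient of $E_2^{n+r,1-r}$. (ii) For the abutment filtration $F^\bullet H^n$ (finite, by the vanishing hypotheses), one has $F^nH^n=H^n$ because $E_\infty^{p,n-p}=0$ for $p<n$, and $F^{l+1}H^n=0$ because $E_\infty^{p,n-p}=0$ for $p>l$; therefore $E_\infty^{n0}=H^n/F^{n+1}H^n$ and $\omega_n$ is $H^n\twoheadrightarrow E_\infty^{n0}\hookrightarrow E_2^{n0}$. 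Consequently $\Ker\omega_n=F^{n+1}H^n$, which is filtered with graded pieces $E_\infty^{p,n-p}$ for $n<p\le l$, each a subquotient of $E_2^{p,n-p}$ with $n-p<0$, hence killed by $p^N$; and $\Coker\omega_n=E_2^{n0}/E_\infty^{n0}$, which is filtered with graded pieces $E_r^{n0}/E_{r+1}^{n0}$, each killed by $p^N$ (it embeds into $E_2^{n+r,1-r}$ and $1-r<0$), and nonzero only for $2\le r$ with $n+r\le l$. Both filtrations have at most $l+1$ steps, so $p^{(l+1)N}$ kills the kernel and the cokernel, i.e. $\omega_n$ is a $p^{(l+1)N}$-isogeny (in fact $p^{lN}$ already suffices).

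\emph{Second case} ($E_2^{uv}=0$ unless $u\le 0$ and $0\le v\le l$). The mechanism is identical with the roles of $u$ and $v$ interchanged, but it has to be redone since the spectral sequence is not symmetric. Here no nonzero differential \emph{leaves} $(0,n)$ (its target $(r,n-r+1)$ has first coordinate $\ge 2$), so $E_2^{0n}\twoheadrightarrow\dots\twoheadrightarrow E_\infty^{0n}$, with successive kernel $\Imm(d_r\colon E_r^{-r,n+r-1}\arr E_r^{0n})$ a subquotient of $E_2^{-r,n+r-1}$, killed by $p^N$ because its first coordinate $-r\ne 0$, and nonzero only finitely often. On the abutment side $F^1H^n=0$ (since $E_\infty^{p,n-p}=0$ for $p>0$), so $E_\infty^{0n}=F^0H^n$ and $\omega_n$ is $E_2^{0n}\twoheadrightarrow F^0H^n\hookrightarrow H^n$; thus $\Ker\omega_n=\Ker(E_2^{0n}\twoheadrightarrow E_\infty^{0n})$ and $\Coker\omega_n=H^n/F^0H^n$, the latter filtered with graded pieces $E_\infty^{p,n-p}$ for $p<0$ in the admissible range, each a subquotient of $E_2^{p,n-p}$ with $p\ne 0$, hence killed by $p^N$. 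Again at most $l+1$ steps on each side, so $\omega_n$ is a $p^{(l+1)N}$-isogeny.

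\emph{Expected obstacle.} There is no deep difficulty here: the whole argument is filtration bookkeeping using the standard exact sequences relating $E_2$, $E_\infty$ and the abutment. The two points requiring genuine care are that one must exploit the direction of the differentials to ensure that \emph{every} graded piece appearing is a subquotient of an $E_2$-term with $v\ne 0$ (respectively $u\ne 0$), so that the hypothesis on $p^N$ actually applies to it; and one must keep track of the length of each of the four filtrations — bounded by $l+1$ — in order to pin down the exponent $(l+1)N$.
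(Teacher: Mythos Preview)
Your proof is correct and follows essentially the same route as the paper's: identify $\Ker\omega_n$ and $\Coker\omega_n$ via the abutment filtration and the tower $E_2^{n0}\supseteq E_3^{n0}\supseteq\cdots\supseteq E_\infty^{n0}$ respectively, observe that each graded piece is a subquotient of an $E_2$-term off the axis and hence killed by $p^N$, and count at most $l+1$ nonzero steps. One small correction: your parenthetical ``in fact $p^{lN}$ already suffices'' fails for $n<0$, where $E_2^{n0}=0$ so $\Ker\omega_n=H^n$ has exactly $l+1$ nonzero graded pieces $E_\infty^{p,n-p}$ (for $0\le p\le l$); the bound $p^{N(l+1)}$ is sharp there.
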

 \begin{proof}
 We consider only the first case because the second one is analogous.
  By convergence there is a filtration
\[
0=F^tH^n \subseteq\cdots\subseteq F^{u+1}H^n\subseteq F^uH^n\subseteq\cdots\subseteq F^sH^n=H^n
\]
for some $s<t$ such that
\[
E^{u,n-u}_\infty\simeq (F^uH^n)/(F^{u+1}H^n).
\]
The vanishing in the hypothesis tells us that
$F^uH^n=F^{u+1}H^n$ if $u<0$ or $u>l$ or $n>u$. Thus we
can choose $t=l+1$ and $s=\textrm{max}(0, n)$ in the above filtration. In particular, $F^nH^n=H^n$ for all $n$.

Since $E^{uv}_2=0$ for $v>0$ all differentials landing in $(u,0)$ are zero in all pages. It follows that $E^{u0}_\infty\subseteq E^{u0}_2$. Moreover there is a map
\[
\omega_n\colon H^n=F^nH^n\arr (F^nH^n)/(F^{n+1}H^n) \simeq
E^{n0}_\infty \hookrightarrow E^{n0}_2.
\]

Assume now that $p^N$ kills all the modules $E^{uv}_2$ for $v\neq 0$. It
follows that $p^N$ kills all modules $E^{uv}_r$ for $v\neq 0$ and $r\geq 2$. In
particular $E^{n0}_{r+1}$ is the kernel of a map from $E^{n0}_r$ to an object killed by $p^N$.
Moreover the differentials at page $l+1$ must be $0$, so that $E^{uv}_{l+1}=E^{uv}_\infty$. From this it follows that 
$$
\Coker \omega_n = E^{n0}_2/E^{n0}_\infty
$$ is killed by $p^{N(l+1)}$.

It remains to look at $\Ker(\omega_n) = F^{n+1}H^n$. But this object has a filtration of lenght $l$ of subobjects whose partial cokernels are killed by $p^N$. It follows that it must be killed by $p^{N(l+1)}$.
 \end{proof}
 
 \begin{lem}\label{projective p inverted}
  Let $B$ be a smooth $W$-algebra, let $\hat{B}$ the $p$-adic
  completion of $B$ and let $\Omega_{\hat{B}}$ be the $p$-adic
  completion of the module of algebraic differentials
  $\Omega_{\hat{B}/W}$ (As in Remark \ref{omegasurjective}, $\Omega_{\hat{B}}$ is a quotient of $\Omega_{\hat{B}/W}$ ). Let $(M,\nabla)\in
  \Conn(\hat{B}/W,\Omega_{\hat{B}})$. Then there exist
  $l,a,b\in \N$ and maps of $\hat{B}$-modules $\alpha\colon \hat B^l\arr M$, $\beta\colon M\to \hat B^l$ satisfing $p^a(\alpha\beta-p^b\id_{M})=0$. In particular if $F\colon \Mod(\hat B)\arr \mathscr{C}$ is any linear functor with values in a linear category and $F(\hat B)=0$ then $p^{a+b}$ kills $F(M)$.
 \end{lem}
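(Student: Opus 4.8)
The plan is to reduce the statement to the fact that $M$ becomes a finitely generated \emph{projective} module after inverting $p$, and then to build $\alpha$ and $\beta$ out of a finite free presentation of $M[1/p]$ over $\hat B[1/p]$ by clearing denominators. For the first point: since $B$ is smooth over $\lW$ and the PD-ideal $(p)$ is principal, taking $\lS=(\Spec\lW,(p),\gamma)$ and $\lX=\Spec B$ places us in the situation \ref{When the diagram is catesian}\,(3), so the PD-envelope is $\lX$ itself, $\Omega_\lD=\Omega_B$, and $\hat B$ is Noetherian. By \ref{connection as stratification} (together with the identification $\Conn(\lX/\lW,\Omega_\lD)=\Conn(\lD/\lW,\Omega_\lD)$ valid over a Noetherian base) the pair $(M,\nabla)$ corresponds to a finitely presented differential module over $(\hat B,\Der(\hat B/\lW))$, where $\Der(\hat B/\lW)=\Hom_{\hat B}(\Omega_B,\hat B)$ is a finitely generated $\hat B$-module. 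By \ref{connections in char zero are locally simple} the differential ring $(\hat B\otimes_\lW K,\Der(\hat B/\lW)\otimes K)$ is locally simple, and $M\otimes_\lW K=M[1/p]$ is a finitely presented differential module over it; hence $M[1/p]$ is locally free over the Noetherian ring $\hat B[1/p]$ by \ref{locally simple implies locally free}, in particular finitely generated projective.

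Next I would fix generators $m_1,\dots,m_l$ of the finitely generated $\hat B$-module $M$ and let $\alpha\colon\hat B^l\to M$ be the resulting surjection. Then $\alpha[1/p]$ is a surjection onto a projective module, so it admits an $\hat B[1/p]$-linear splitting $s\colon M[1/p]\to\hat B[1/p]^l$ with $\alpha[1/p]\circ s=\id_{M[1/p]}$. The finitely many elements $s(m_i)\in\hat B[1/p]^l$ have bounded denominators, so there is $b\in\N$ with $p^bs(m_i)\in\hat B^l$ for all $i$; by $\hat B$-linearity the composite $M\to M[1/p]\xrightarrow{\,p^bs\,}\hat B[1/p]^l$ then takes values in $\hat B^l$, giving $\beta\colon M\to\hat B^l$. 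Localizing at $p$ yields $(\alpha\beta)[1/p]=\alpha[1/p]\circ p^bs=p^b\id_{M[1/p]}$, so $\alpha\beta-p^b\id_M$ maps to zero under $\End_{\hat B}(M)\to\End_{\hat B}(M)[1/p]=\End_{\hat B[1/p]}(M[1/p])$ (an isomorphism because $M$ is finitely presented), i.e.\ it is $p$-power torsion in $\End_{\hat B}(M)$. Since $\End_{\hat B}(M)$ is finitely generated over the Noetherian ring $\hat B$, its $p$-power torsion is annihilated by a single $p^a$, and therefore $p^a(\alpha\beta-p^b\id_M)=0$.

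Finally, if $F\colon\Mod(\hat B)\to\mathscr C$ is linear with $F(\hat B)=0$, then $F(\hat B^l)=0$, so $F(\alpha\beta)=F(\alpha)\circ F(\beta)=0$; applying $F$ to the identity $p^a\alpha\beta=p^{a+b}\id_M$ gives $p^{a+b}\id_{F(M)}=0$, that is $p^{a+b}$ kills $F(M)$. The only non-formal input is the local freeness of $M[1/p]$, which is precisely where the integrable connection is used, through the locally-simple-differential-ring results of \S\ref{section:tannakian}; everything else is bookkeeping. The only mild subtlety is the extraction of the two uniform exponents $a$ and $b$ — one to clear denominators in $s$, one to pass from a statement over $\hat B[1/p]$ to one over $\hat B$ — and both are supplied by Noetherianity of $\hat B$. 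I do not expect a genuine obstacle.
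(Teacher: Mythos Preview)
Your argument is correct and follows essentially the same route as the paper: both use \ref{connection as stratification}, \ref{connections in char zero are locally simple}, and \ref{locally simple implies locally free} to see that $M[1/p]$ is projective over $\hat B[1/p]$, then clear denominators on a splitting to produce $\alpha,\beta$ with $\alpha\beta=p^b\id$ after inverting $p$, and finally bound the $p$-torsion of $\alpha\beta-p^b\id_M$. If anything, your treatment is slightly more careful than the paper's in justifying why $\beta$ lands in $\hat B^l$ and why a uniform exponent $a$ exists (via Noetherianity of $\hat B$).
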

 \begin{proof}
 The last claim follows by linearity applying the functor $F$ to the given expression and using that $F(\hat B)$ and therefore $F(\beta)$ are zero.
 
Applying Proposition \ref{locally simple implies locally free}, Lemma
\ref{connection as stratification} and Lemma \ref{connections in char zero are
locally simple} we can conclude that $M[1/p]$ is a finitely generated projective $\hat B[1/p]$-module. In particular there exist maps $\alpha\colon \hat B[1/p]^l\arr M[1/p]$ and $\beta\colon M[1/p]\arr \hat{B}[1/p]^l$ such that $\alpha\beta=\id$. Multiplying $\alpha$ and $\beta$ by a power of $p$ we can find $b\in \N$ and  $\alpha\colon \hat{B}^l\arr M$ and $\beta\colon M\arr \hat{B}^l$ such that $\alpha\beta=p^b\id$ in $ M[1/p]$. In particular there also exists $a\in \N$ such that $p^a(\alpha\beta-p^b\id_{M})=0$ as required.
\end{proof}


 \begin{proof}[Proof of Theorem \ref{Toninibasechange}]
We follow the proof of \cite[Theorem 7.8]{BO78}, in particular the proofs of \ref{unifbounded} and \ref{uflatEfree} are essentially the same  as the one given in the above reference.

We may assume that ${\lS}'$ and ${\lS}$ are affine.
We want to reduce to the case where $X$ is also affine by using cohomological descent as in \cite[Proposition 3.5.2]{Ber74} and \cite[Theorem 7.8]{BO78}. If $U\subseteq \lS$ is any open
subset then we have $$\rr
(g|_{g^{-1}(U)})_{g^{-1}(U)/U_*}(E|_{(g^{-1}(U)/U)_\crys})=\rr
g_{X/\lS_*}(E)|_U.$$  
Thus in \ref{unifbounded} we may assume $U=\lS$.

We take a finite affine covering $\{U_i\}_{i=0,\cdots,n}$  of
$X$.  From the covering we obtain the topos
$(X^{\bullet}/\lS)^{\sim}_{\rm{crys}}$ as in \cite[p.
335, p. 344]{Ber74}, and the morphism of topoi 
\[
\pi\colon(X^{\bullet}/\lS)^{\sim}_{\rm{crys}}\rightarrow
(X/\lS)^{\sim}_{\rm{crys}}.
\] 
Similarly, we have the topos $(X^{\prime\bullet}/\lS')^{\sim}_{\rm{crys}}$ and
the corresponding morphism of topoi $\pi':(X^{\prime\bullet}/\lS')^{\sim}_{\rm{crys}}\rightarrow (X'/\lS')^{\sim}_{\rm{crys}}$. Thus we have a diagram of topoi

\begin{equation*}
\xymatrix{
(X^{\prime\bullet}/\lS')^{\sim}_{\rm{crys}}\ar[r]^{h^{\bullet}_{\rm{crys}}}\ar[d]^{\pi'}\ar@/_3pc/[dd]_{g'_{X^{\prime \bullet}/\lS'}}&(X^{\bullet}/\lS)^{\sim}_{\rm{crys}}\ar[d]^{\pi}\ar@/^3pc/[dd]^{g_{X^\bullet/\lS}}\\
(X^{\prime}/\lS')^{\sim}_{\rm{crys}}\ar[r]^{h_{\rm{crys}}}\ar[d]^{g^{\prime}_{X'/\lS'}}&(X/\lS)^{\sim}_{\rm{crys}}\ar[d]^{g_{X/\lS}}\\
\lS^{\prime \sim}_{\rm{Zar}}\ar[r]^u&\lS^{\sim}_{\rm{Zar}.}
}
\end{equation*}
Then cohomological descent implies that there are canonical isomorphisms \cite[V, Proposition 3.4.8]{Ber74}
\[
E\xrightarrow{\cong} \rr\pi_*(\pi^*E)
\hspace{30pt}\text{and}\hspace{30pt}
h_{\rm{crys}}^*E\xrightarrow{\cong}
\rr\pi_*^{\prime}(\pi^{\prime*}h_{\rm{crys}}^*(E)).
\]
Applying $\rr g_{X/\lS*}$ to the first above isomorphism, $\rr g'_{X'/\lS'*}$ to the second, we obtain the following commutative  diagram 
\begin{equation}\label{twoiso}
\xymatrix{
    \dl u^*\rr g_{X/\lS*}(E)\ar[r]^(.45){\cong}\ar[d]&\dl u^*\rr g_{X^{\bullet}/\lS*}(\pi^*(E))\ar[d]\\\
\rr g'_{X'/\lS'*}h^*_{\rm{crys}}(E)\ar[r]^(0.45)\cong&\rr
g^{\prime}_{X^{\prime
\bullet}/\lS'*}h^{\bullet*}_{\rm{crys}}(\pi^*(E)).
}
\end{equation}
The vertical map on the right is obtained via adjunctions as in
\textbf{Setting} \ref{base change convention}, using that $\rr g_{X^{\bullet}/\lS}(\pi^*(E))$ is bounded being isomorphic to $\rr g_{X/\lS_*}(\rr\pi_*(\pi^*E))=\rr g_{X/\lS_*}(E)$ which is bounded by \cite[Theorem 7.6]{BO78}.
This means that we can work with $X^{\bullet}$ and $X^{\prime\bullet}$, instead of $X$ and $X'$ respectively.

Now let $\Delta$ be the opposite category of the category whose objects are subsets of $I\coloneqq \{0,1,2,\hdots,n\}$ and whose morphisms are the inclusions of subsets. As in \cite[V, 3.4.3]{Ber74} we obtain the commutative diagram 
\begin{equation*}
\xymatrix{
(X^{\prime\bullet}/\lS')^{\sim}_{\rm{crys}}\ar[r]^{h^{\bullet}_{\rm{crys}}}\ar[d]^{g^{\prime\bullet}_{X^{\prime\bullet }/\lS'}}\ar@/_3pc/[dd]_{g^{\prime}_{X^{\prime\bullet }/\lS'}}&(X^{\bullet}/\lS)^{\sim}_{\rm{crys}}\ar[d]^{g^{\bullet}_{X^{\bullet }/\lS}}\ar@/^3pc/[dd]^{g_{X^{\bullet}/\lS}}\\
(\lS^{\prime\sim}_{\rm{Zar}})^{\Delta}\ar[r]^{u^{\bullet}}\ar[d]^{\omega'}&(\lS_{\rm{Zar}}^{\sim})^{\Delta}\ar[d]^{\omega}\\
\lS^{\prime \sim}_{\rm{Zar}}\ar[r]^u&\lS_{\rm{Zar}}^{\sim}.
}
\end{equation*}
We know that $\rr g^{\bullet}_{X^{\bullet }/\lS}(\pi^*(E))$ has
bounded cohomologies by \cite[pp.~340, 320]{Ber74}. Then  by \cite[V. 3.4.9]{Ber74}, one has the isomorphism 
\begin{equation}\label{map}
\dl u^*(\rr\omega_*(\rr g^{\bullet}_{X^{\bullet
}/\lS_*}(\pi^*(E))))\xrightarrow{\cong} \rr\omega^{\prime}_*(\dl
u^{\bullet*}(\rr g^{\bullet}_{X^{\bullet }/\lS_*}(\pi^*(E)))).
\end{equation}
Note that by \cite[Prop. V. 3.4.9, i), p. 340]{Ber74} we have
$\rr^{i}\omega_*(-)=\rr^{i}\omega_*'(-)=0$ for all $i\geq n+1$
    or $i<0$
, so by
\cite[\href{https://stacks.math.columbia.edu/tag/07K7}{Tag
07K7}]{stacks-project} $\rr\omega$ and $\rr\omega'$ make sense. 
The right vertical arrow in \eqref{twoiso} is the composition of (\ref{map}) with  the map obtained by applying $\rr\omega'_*(-)$ to
\begin{equation}\label{reduction to affine}
    \dl u^{\bullet*}\rr
    g^{\bullet}_{X^{\bullet}/\lS_*}(\pi^*(E))\rightarrow  \rr
    g^{\prime
    \bullet}_{X^{\prime\bullet}/\lS_*'}(h^{\bullet*}_{\rm{crys}}(\pi^*(E))).
\end{equation}

Therefore, 
in \ref{unifbounded}, 
\ref{uflatEfree} and \ref{EWnflatetc}
  we can replace $\lS^{\sim}_{\rm{Zar}}$ and $\lS^{\prime
  \sim}_{\rm{Zar}}$  by $(\lS^{\sim}_{\rm{Zar}})^{\Delta}$ and
  $(\lS^{\prime \sim}_{\rm{Zar}})^{\Delta}$ respectively. When
  \ref{unifbounded} is proved, we can conclude that both  $\dl
  u^{\bullet*}\rr
  g^{\bullet}_{X^{\bullet}/\lS_*}(\pi^*(E))$ and $ \rr
  g^{\prime
  \bullet}_{X^{\prime\bullet}/\lS_*'}(h^{\bullet*}_{\rm{crys}}(\pi^*(E)))$
  have cohomologies bounded from above with a bound $i_0$ depending
  only on  $g_0$. Thus in \ref{almostisoW} we can also
  replace $\lS^{\sim}_{\rm{Zar}}$ and $\lS^{\prime
  \sim}_{\rm{Zar}}$  by $(\lS^{\sim}_{\rm{Zar}})^{\Delta}$ and
  $(\lS^{\prime \sim}_{\rm{Zar}})^{\Delta}$ respectively, because  we can reset the
  $N$ obtained for  $(\lS^{\sim}_{\rm{Zar}})^{\Delta}$ and
  $(\lS^{\prime \sim}_{\rm{Zar}})^{\Delta}$ to 
  \begin{equation*}
N_i' \coloneq \begin{cases}
    {N}_i &\text{if $i\leq i_0$}\\
    0 &\text{if $i > i_0$}
\end{cases}
\end{equation*} so that the conditions of Lemma
\ref{toninitoplemma} are satisfied.

By
\cite[Prop.~V.3.4.4]{Ber74} and \cite[Prop.~V.3.4.5]{Ber74} we see that $\dl u^{\bullet*}$ and $\rr g^\bullet_{X^\bullet/\lS_*}$ are computed componentwise.
An intersection of open affine subsets of $X$ may not be affine, but it is separated. Thus one can first reduce the problem to the case when $X$ is separated and, after, to the case when $X$ is affine.

Now let $\lS=\Spec A$ and $\lS'=\Spec A'$.
Since $g_0\colon X\to S$ is smooth and $X,S$ are affine, there
is a smooth affine lift $\tilde{g}_0\colon\Spec B=\lX\to \lS$ by \cite[\href{https://stacks.math.columbia.edu/tag/07M8}{Tag 07M8}]{stacks-project}, and by pulling back along $u\colon\lS'\to\lS$ we get a lift 
of $g_0'$ to $\tilde{g}_0'\colon \Spec B'=\lX'\to\lS'$. The comparison theorem (e.g. \cite[\href{https://stacks.math.columbia.edu/tag/07LG}{Tag 07LG}]{stacks-project}) tells us that there is a commutative diagram
\begin{equation}\label{de rham versus crystalline}
    \xymatrix{ \dl u^*\rr g_{X/\lS_*}(E)\ar[r]\ar[d]_-\cong& \rr g_{X'/\lS'_*}h_{\crys}^*(E)\ar[d]^-\cong\\
        \dl u^*\tilde{g}_{0_*}(\mathcal{M} \otimes_{ \sO_{\lX}}\Omega^\bullet_{\lX/\lS})\ar[r]^-\phi&\tilde{g}_{0_*}'(
    \mathcal{M}'\otimes_{ \sO_{\lX'}}\Omega^\bullet_{\lX'/\lS'})}
\end{equation}
where $\mathcal{M}\otimes_{\sO_{\lX}}\Omega^\bullet_{\lX/\lS}$ and
$\mathcal{M}' \otimes_{ \sO_{\lX'}}\Omega^\bullet_{\lX'/\lS'}$ are the de
Rham complex associated to the topologically quasi-nilpotent connections
corresponding to the crystal $E$ and $h_{\crys}^*E$ respectively via the map in
Theorem \ref{cry equivalent to quasi-nil}.

\begin{proof}[Proof of \ref{unifbounded}]
We see from the comparison theorem  \cite[\href{https://stacks.math.columbia.edu/tag/07LG}{Tag 07LG}]{stacks-project} that  $\rr g_{X/\lS_*}(E)$ has
bounded cohomologies whose bound depends only on the relative
dimension of $g_0\colon X\to S$,  so  the proof of \ref{unifbounded} is finished. 
\end{proof}


\begin{proof}[Proof of \ref{uflatEfree}]
Replacing the affine schemes $\lX,\lS,\lX',\lS'$ by the
 rings $B,A,B',A'$ and the quasi-coherent sheaves
$\mathcal{M},\mathcal{M}'$ by modules $M,M'$ respectively we
obtain the map
$$\phi\colon \dl
u^*(M\otimes_{B} \Omega^\bullet_{B/A})\arr
M'\otimes_{B'}\Omega^\bullet_{B'/A'}$$ which is the ring version of
the map
$\phi$ in \eqref{de rham versus crystalline} (which we still call $\phi$).
The functoriality in Theorem \ref{cry equivalent to quasi-nil} tells us that $M'=M\otimes_{ B}  B'$. Since  $\Omega^\bullet_{{B}'/A'}=\Omega^\bullet_{B/A}\otimes_{A} A'$
we see that the target of $\phi$ is just $u^*(M \otimes_{B}\Omega^\bullet_{B/A})$. If we denote by $Q^{\bullet}$ the bounded complex of $A$-modules $M \otimes_{B} \Omega^{\bullet}_{B/A}$, it follows that the map $\phi$ we are considering is the canonical map  
\begin{equation}\label{LuQtouQ}
\dl u^*(Q^\bullet)\arr u^*Q^\bullet.
\end{equation}
When $u$ is flat the map \eqref{LuQtouQ} is a
quasi-isomorphism. The same holds if $E$ is flat because in this case $Q^\bullet$ is a complex
of flat $A$-modules.
\end{proof}

\begin{proof}[Proof of \ref{EWnflatetc}] We proceed as in
    \ref{uflatEfree} and  get the map \eqref{LuQtouQ}. Assume
    that $E$ is $W_n$-flat and that there is a map of rings
    $W_n\to R$  such that $A'=A\otimes_{W_n} R$ as an
    $A$-algebra, then the module $M$ is $W_n$-flat in
    $\Mod(B)$ and therefore it is flat as $W_n$-module. Therefore the complex $Q^\bullet$ is a
    complex of flat $W_n$-modules. Using the
    flatness of $A$ and $Q^i$ over $W_n$ one can easily check that
\[
\dl u^*(Q^\bullet)=Q^\bullet\overset{\dl}{\otimes}_A A' \simeq
Q^\bullet \overset{\dl}{\otimes}_A (A\otimes_{W_n} R)
\simeq Q^\bullet \overset{\dl}{\otimes}_{W_n} R \simeq Q^\bullet
\otimes_{W_n} R.
\]
Thus \eqref{LuQtouQ} is an isomorphism.
\end{proof}
    
\begin{proof}[Proof of \ref{almostisoW}]
We proceed as in
    \ref{uflatEfree} and  get the map \eqref{LuQtouQ}. We consider the
    converging cohomological spectral sequences \cite[Proposition 5.7.6, with
    the convention on Dual Definition 5.2.3]{Wei94}
\[
E_2^{xy}=\Hl^x(\dl^yu^*(Q^{\bullet}))\Rightarrow \dl^{x+y} u^*(Q^{\bullet})=H^{x+y}
\] where $\dl^yu^*(Q^{\bullet})$ is the complex obtained by
applying $\dl^yu^*$ on each terms of $Q^\bullet$.
This sequence is obtained from the double complex made by the projective resolutions of the modules in $Q^\bullet$. 
It is a fourth quadrant spectral sequence, \emph{i.e.} $E_2^{xy}=0$ when $y>0$ or $x<0$ or $x>m$ (where $m$ is the relative dimension of $g_0$).

By Lemma \ref{almost degenerate spectral sequence}, from the spectral sequence, we obtain a map
\begin{equation}\label{edgemapuQ}
H^i=\dl^iu^*(Q^\bullet) \arr \Hl^i(u^*Q^\bullet)=E^{i0}_2
\end{equation}
which is the $i$-th cohomology of the map we are considering.

By Lemma \ref{affine lift} we have the following diagram
\[
\begin{tikzpicture}[xscale=2.9,yscale=-1.2]
    \node (A0_0) at (0, 0) {$X$};
    \node (A0_1) at (1, 0) {$X_{\lW}\coloneq \Spec \widetilde{B}$};
    \node (A1_0) at (0, 1) {$S$};
    \node (A1_1) at (1, 1) {$S_{\lW}\coloneq\Spec \widetilde{A}$};
    \node (A2_0) at (0, 2) {$\Spec k$};
    \node (A2_1) at (1, 2) {${\lW}$};

\path (A0_0) edge [->] node[auto] {$\scriptstyle{}$} (A0_1);
\path (A1_0) edge [->] node[auto] {$\scriptstyle{}$} (A1_1);
\path (A2_0) edge [->] node[auto] {$\scriptstyle{}$} (A2_1);
\path (A1_0) edge [->] node[auto] {$\scriptstyle{}$} (A2_0);
\path (A1_1) edge [->] node[auto] {$\scriptstyle{}$} (A2_1);
        \path (A0_1) edge [->] node[below,midway,scale=.9]
            {$\scriptstyle{}$} (A1_1);
    \path (A0_0) edge [->] node[auto] {$\scriptstyle{}$} (A1_0);
\end{tikzpicture}
\]
where $\widetilde{A}$ is the $p$-adically complete flat lift of the smooth
$k$-algebra $A/I$ to $W$, and $\widetilde{B}$ is the $p$-adically
complete flat
lift of the smooth $A/I$-algebra $B/I$ to $\widetilde{A}$. Since
$\widetilde{A}$ is $p$-adically formally smooth over $W$ and
$A\to A/I$ is a quotient of $p$-adically
discrete $W$-algebras defined by a nil ideal, we can choose a $W$-map $\widetilde{A}\to A$.
In the same way, we can choose an $\widetilde{A}$-map $\widetilde{B}\to B$.

If $(\hat M,\hat \nabla)\in \QNCf( X/\lW)$ is the
quasi-nilpotent connection corresponding to $E_{\lW}\in
\Crys(X/\lW)$ via Theorem \ref{cry equivalent to quasi-nil}, then $\hat
M\otimes_{\widetilde B} B\simeq M$ by the crystalline nature of $E_{\lW}$. Applying Lemma 
\ref{projective p inverted} to $\hat M$ and the functors
\[
    F_{yt}(-)\coloneq \dl^y u^*(- \otimes_{\widetilde B} \Omega^t_{B/A})
        \hspace{50pt} (y\in\Z_{\neq 0}, t\in \N)
\]
we find $N'\in \N$  such that $p^{N'}$ kills $F_{yt}(\hat{M})$ hence
also
$\Hl^x(F_{y\bullet}(\hat M))=E_2^{xy}$ ($y\neq 0$). 
Notice that $N'\in \N$ depends only on the $\widetilde{B}$-module
$\hat{M}$ and the
$\widetilde{B}$-module $\hat{M}$ depends only
on $E_{\lW}\in \Crys(X/\lW)$. 

  Set $N_i\coloneq (m+1)N'$ for all $i\in\Z$, where $m$ is the relative dimension
of $g_0$. By Lemma  \ref{almost degenerate spectral
  sequence},
 the $i$-th cohomology of
\eqref{basechange} is a $p^{N_i}$-isogeny.
\end{proof}
The proof of the theorem is done.
\end{proof}

\begin{rmk}
Note that in the proof of Theorem \ref{Toninibasechange}
(a), (d), the bound $r$ and the function $N\colon \Z\to \N$
depend not only on the relative dimension of $g_0$, but also on
the number of  opens in the affine covering
$\{U_i\}_{i=0,\cdots,n}$ of $X$ and the affine coverings of the arbitrary
intersections of $\{U_i\}_{i=0,\cdots,n}$. Indeed this was used during the
reduction of $X$ to the affine case (see the two paragraphs after \eqref{reduction to
affine}). Since this is a choice on  $X$ which is part of the
map $g_0:X\to S$, we didn't specify it.
\end{rmk}

\subsection{The case of an affine base}\label{subsection:affine}
In this section we treat the case in which the base $\lS$ is affine. The first result is a corollary of the base change theorem proven in the previous subsection (Theorem \ref{Toninibasechange}). 
\begin{cor}\label{RGamma Rlim} 
    In the situation of \textbf{Setting} \ref{base change convention} assume that $\lS=\Spec A$ is affine  and set $A_n=A/p^n$, $\lS_n=\Spec A_n$ and $E_n=E_{|(X/\lS_n)_{\rm{crys}}}$. Then the following hold. 
\begin{enumerate}[label=\textup{(}\alph*\textup{)}]
 \item There exists $r\in\N$, which depends only on
     $X\xrightarrow{g_0}S$ such that for all  $i\geq r$ we have
     $\rr^i\Gamma((X/\lS_n)_\crys, E_n)=0$.
     Moreover, we have
     \begin{equation}\label{Rlim}
 \rr\Gamma((X/\lS)_{\rm{crys}},E) \simeq \rr\varprojlim_n \rr\Gamma((X/\lS_n)_{\rm{crys}},E_n)
 \end{equation}
 is  quasi-isomorphic to a bounded complex.
 \item
 If $\lS$ is flat over $\lW$ and $E$ is $p$-torsion free then the
 system $\{\rr\Gamma((X/\lS_n)_{\rm{crys}},E_n)\}$ is
 quasi-consistent in the sense of \textup{\cite[B.4]{BO78}} and, if
 moreover $\lS$ is Noetherian, then $\rr\Gamma((X/\lS)_{\rm{crys}},E)\overset{\dl}{\otimes}_A  A_n \simeq \rr\Gamma((X/\lS_n)_{\rm{crys}},E_n)$.
 \item
If $\lS$ is flat over $\lW$ and Noetherian, $E$ is $p$-torsion free, $X/S$ is proper and $S=S_1$ $(I=(p))$, then $ \rr\Gamma((X/\lS)_{\rm{crys}},E)$ is quasi-isomorphic to a bounded complex of finitely generated $\hat A$-modules, where $\hat A$ is the $p$-adic completion of $A$, and
 \[
  \Hl^i((X/\lS)_{\rm{crys}},E) \simeq \varprojlim_n \Hl^i((X/\lS_n)_{\rm{crys}},E_n).
 \]
 Moreover, the projective system on the right hand side satisfies
 the Mittag-Leffler condition,  and is made by finitely generated $\hat A$-modules.
 \end{enumerate}
\end{cor}
\begin{rmk}
The proof of Corollary \ref{RGamma Rlim} is the same as the proof of \cite[Proposition 5.3 1)]{ES16} and \cite[\textbf{claim} in pp.~10--11]{Shi07I} .  
\end{rmk}
\begin{proof}[Proof of Corollary \ref{RGamma Rlim}] 
Firstly, notice that we can replace $A$ by its $p$-adic
completion thanks to
\cite[\href{https://stacks.math.columbia.edu/tag/05GG}{05GG}]{stacks-project}.
\begin{enumerate}[label=\textup{(}\alph*\textup{)}]
\item
    The isomorphism \eqref{Rlim} follows from \cite[\href{https://stacks.math.columbia.edu/tag/07MV}{Tag 07MV}]{stacks-project}. By Theorem \ref{Toninibasechange} \ref{unifbounded} and \cite[Remark B.1.6]{BO78} we also get the boundness.
\item
 The quasi-consistency follows from Lemma \ref{restricting ptorsion free} and Theorem \ref{Toninibasechange} \ref{EWnflatetc} because the maps $\lS_{n-1}\arr \lS_n$ are  base changes of the maps $\Spec W_{n-1}\arr\Spec W_n$.
From the quasi-consistency and \cite[Proposition B.5, 3)]{BO78} we obtain the last isomorphism.
 \item
Assume that $\lS$ flat over $\lW$, $E$ is $p$-torsion free, $X/S$ is proper and $S=S_1$ ( $I=(p)$).
 Since $\rr\Gamma((X/\lS_1)_{\rm{crys}},E_1)$ has finitely generated cohomologies and all the $\rr\Gamma((X/\lS_n)_{\rm{crys}},E_n)$ are uniformly cohomologically bounded thanks to \cite[p.7.7]{BO78}, the result follows from \cite[Lemma B.6 and Proposition B.7]{BO78}. Here we use that a bounded  complex with finitely generated cohomology is quasi-isomorphic to a bounded complex of finitely generated modules.\qedhere
 \end{enumerate}
\end{proof}

Always in \textbf{Setting} \ref{base change convention}, we consider now the situation in Remark  \ref{adjgxsandgamma} \ref{adjGamma}. We analyse under which condition the map in \eqref{basechange affine} is an isomorphism (or an isogeny). 

\begin{thm}
\label{Toninibasechange Gamma} Let  the notation and hypothesis
be as in \textbf{Setting} \ref{base change convention}. Assume moreover
that $\lS$ is Noetherian and $\lW$-flat. Let $\lS'\coloneq \Spec
A'$, $\lS\coloneq \Spec A$, where $A$ and $A'$ are $p$-adically
complete rings. Suppose that one of the following is true: $p$ is
nilpotent in $\odi{\lS'}$ or $X/S$ is proper and $S=\Spec A/p$
\emph{(i.e.} $I=(p))$. 
Then the following hold. 
\begin{enumerate}[label=\textup{(}\alph*\textup{)}]
\item\label{Toninibaschange Gammaa} Let $E_{\lW}\in\Crys(X/\lW)$ and
    set $E=(E_{\lW})_{|(X/\lS)_\crys}$. Assume that $S$ is smooth
    over $k$. Then there exists $N\colon \Z\arr \N$, depending
    only on $E_{\lW}$ and $g_0$, such that the $i$-th cohomology of the map (\ref{basechange affine}) is a $p^{N_i}$-isogeny isomorphism.
\item\label{Toninibasechange quasi-iso Gamma E lf} The map
    (\ref{basechange affine}) is an isomorphism if $E\in
    \Crys(X/\lS)$ is a flat crystal \textup{\cite[p.
    7.10]{BO78}}.
\item\label{Toninibasechange quasi-iso Gamma altro} The map
    (\ref{basechange affine}) is an isomorphism if $E \in
    \Crys(X/\lS)$ is $p$-torsion free and all $u_n\colon
    \lS'_n\arr \lS_n$  are either flat or the base change of a map to $\lW_n$.
\end{enumerate}
\end{thm}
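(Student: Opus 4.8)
The plan is to reduce every assertion to the finite-level base change theorem \ref{Toninibasechange}, which is stated for bases in which $p$ is nilpotent, whereas here $\lS=\Spec A$ is only $p$-adically complete. The bridge between the two settings is Corollary \ref{RGamma Rlim}, which presents $\rr\Gamma((X/\lS)_\crys,E)$ in terms of its reductions over the $\lS_n=\Spec A/p^n$. Throughout I use that $\rr\Gamma(\lS',-)$ is exact on quasi-coherent sheaves over the affine scheme $\lS'$, and that by \ref{adjgxsandgamma} the map \eqref{basechange affine} is compatible, via $\rr\Gamma(\lS',-)$, with the maps \eqref{basechange}. Write $E_n$ for the restriction of $E$ to $(X/\lS_n)_\crys$ and $(h_\crys^{*}E)_n$ for that of $h_\crys^{*}E$ to $(X'/\lS'_n)_\crys$. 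I treat the two cases of the hypothesis separately.

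\textbf{Case $p$ nilpotent in $\odi{\lS'}$.} Pick $N$ with $p^N\odi{\lS'}=0$, so that $u$ factors through PD-morphisms $\lS'\xrightarrow{u'}\lS_N\xrightarrow{\iota}\lS$, with $p$ nilpotent in $\odi{\lS_N}$ and $V(I_N)=S$. By \ref{various bounded complexes} the complexes $\rr g_{X/\lS_N*}(E_N)$ and $\rr g'_{X'/\lS'*}(h_\crys^{*}E)$ are bounded and quasi-coherent, and taking global sections over $\lS_N$, resp. $\lS'$, computes $\rr\Gamma((X/\lS_N)_\crys,E_N)$, resp. $\rr\Gamma((X'/\lS')_\crys,h_\crys^{*}E)$. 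Applying the exact functor $\rr\Gamma(\lS',-)$ to the base change map of \ref{Toninibasechange} for $u'$ (part \ref{uflatEfree} when $E$ is locally free, parts \ref{uflatEfree}/\ref{EWnflatetc} for (c), part \ref{almostisoW} for (a)) shows that $\dl u'^{*}\rr\Gamma((X/\lS_N)_\crys,E_N)\to\rr\Gamma((X'/\lS')_\crys,h_\crys^{*}E)$ is an isomorphism in cases (b) and (c), and has $i$-th cohomology a $p^{N_i}$-isogeny with $N$ depending only on $E_W$ in case (a). It remains to compare $\dl\iota^{*}\rr\Gamma((X/\lS)_\crys,E)\simeq\rr\Gamma((X/\lS)_\crys,E)\otimes^{\dl}_A A/p^N$ with $\rr\Gamma((X/\lS_N)_\crys,E_N)$: in cases (b) and (c) this is an isomorphism by Corollary \ref{RGamma Rlim}(b) ($E$ is $p$-torsion free, for (b) because it is locally free and $\lS$ is $\lW$-flat, and $\lS$ is Noetherian and $\lW$-flat); in case (a) this is itself the base change map for $\iota$, which, writing $\rr\Gamma((X/\lS)_\crys,E)=\rr\varprojlim_m\rr\Gamma((X/\lS_m)_\crys,E_m)$ by Corollary \ref{RGamma Rlim}(a), applying $-\otimes^{\dl}_AA/p^N$, invoking part \ref{almostisoW} at each finite level $m$, and using \ref{toninitoplemma} to propagate and uniformly bound the isogeny exponents through the derived limit, has $i$-th cohomology a $p^{N'_i}$-isogeny with $N'$ depending only on $E_W$. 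Composing with $\dl u'^{*}$ finishes this case.

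\textbf{Case $X/S$ proper and $S=\Spec A/p$.} Now $p$ need not be nilpotent in $\odi{\lS'}$, so I work level by level and pass to the limit. By Corollary \ref{RGamma Rlim}(a) both $\rr\Gamma((X/\lS)_\crys,E)$ and $\rr\Gamma((X'/\lS')_\crys,h_\crys^{*}E)$ are derived inverse limits of their level-$n$ versions, and by Corollary \ref{RGamma Rlim}(c) (in cases (b) and (c), where $E$ is $p$-torsion free) the former is a bounded complex of finitely generated modules over the Noetherian ring $A$, so $\dl u^{*}=-\otimes^{\dl}_A A'$ commutes with this $p$-adic limit. At level $n$, Theorem \ref{Toninibasechange} applies to $u_n\colon\lS'_n\to\lS_n$: parts \ref{uflatEfree}/\ref{EWnflatetc} give isomorphisms $\dl u_n^{*}\rr\Gamma((X/\lS_n)_\crys,E_n)\xrightarrow{\ \sim\ }\rr\Gamma((X'/\lS'_n)_\crys,(h_\crys^{*}E)_n)$ (for (c): $E_n$ is $\lW_n$-flat by \ref{restricting ptorsion free}, $u_n$ is flat or a base change from $\lW_n$, and $\lS_n$ is $\lW_n$-flat since $\lS$ is $\lW$-flat). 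Taking $\rr\varprojlim_n$ yields the isomorphism in (b) and (c). In case (a) the same scheme applies with part \ref{almostisoW} at each level, producing $p^{N_i}$-isogenies with $N$ uniform in $n$ and depending only on $E_W$; since $\varprojlim$ and $R^1\varprojlim$ of $p^{N_i}$-torsion objects are $p^{N_i}$-torsion, the exact triangle $\rr\varprojlim_nC_n\to\rr\varprojlim_nD_n\to\rr\varprojlim_n\mathrm{Cone}(\dots)$ attached to the level-$n$ maps shows that \eqref{basechange affine} has $i$-th cohomology a $p^{N'_i}$-isogeny with $N'$ depending only on $E_W$ (the compatibility of $\dl u^{*}$ with the limit still holding because $\rr\Gamma((X/\lS)_\crys,E)$ has finitely generated cohomology over $A$ by properness).

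\textbf{Main obstacle.} The delicate point in both cases is the interaction of the derived pullback $\dl u^{*}=-\otimes^{\dl}_A A'$ with the presentation of $\rr\Gamma((X/\lS)_\crys,E)$ from Corollary \ref{RGamma Rlim} as a derived inverse limit (equivalently, a $p$-adic completion) over the levels $\lS_n$; this requires $\lS$ Noetherian and finite generation of the cohomology, which is precisely why the properness hypothesis and Corollary \ref{RGamma Rlim}(c) enter. For part (a), the further subtlety is keeping the isogeny exponents $N_i$ uniform over all $n$ and dependent only on $E_W$, so that the cone/$R^1\varprojlim$ argument outputs a single function $N'\colon\Z\to\N$; this is exactly the role of \ref{toninitoplemma}, the uniformity being inherited from the explicit bounds in \ref{almost degenerate spectral sequence} and \ref{projective p inverted} that underlie \ref{almostisoW}.
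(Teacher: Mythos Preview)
Your overall strategy matches the paper's: reduce to finite levels via Corollary \ref{RGamma Rlim}, invoke Theorem \ref{Toninibasechange} there, and then pass to the derived limit. For parts (b) and (c) this is essentially correct and agrees with the paper.

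For part (a), however, you miss the key first step the paper takes: replace $E_W$ by a $p$-torsion-free crystal using Remark \ref{Replacing by ptorsion free} (which applies because $S$ smooth over $k$ and $X/S$ smooth force $X$ smooth and quasi-compact over $k$). After this replacement the identifications you need are \emph{isomorphisms}, not merely isogenies. Concretely:
\begin{itemize}
\item In your nilpotent case you try to compare $(\rr\varprojlim_m\rr\Gamma((X/\lS_m),E_m))\otimes^{\dl}_A A/p^N$ with $\rr\Gamma((X/\lS_N),E_N)$ by ``invoking part \ref{almostisoW} at each finite level $m$ and using \ref{toninitoplemma}''. This silently assumes that $-\otimes^{\dl}_A A/p^N$ commutes with $\rr\varprojlim_m$, which you do not justify. (It happens to be true here because $A$ is $\lW$-flat, so $A/p^N$ has the two-term free resolution $A\xrightarrow{p^N}A$ and the tensor is just a cone; but you never say this.) With $E$ $p$-torsion free the paper simply quotes Corollary \ref{RGamma Rlim}(b) to get $\rr\Gamma((X/\lS),E)\otimes^{\dl}A_n\simeq\rr\Gamma((X/\lS_n),E_n)$ on the nose.
\item In your proper case you assert that ``$\rr\Gamma((X/\lS)_\crys,E)$ has finitely generated cohomology over $A$ by properness'' and that therefore $\dl u^*$ commutes with the limit. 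But the finiteness result you need is Corollary \ref{RGamma Rlim}(c), which \emph{requires} $E$ to be $p$-torsion free; you cannot invoke it for a general $E_W$. After the $p$-torsion-free replacement the paper represents $\rr\Gamma((X/\lS),E)$ by a bounded-above complex $P$ of finite free $A$-modules, computes $\dl u_n^*(P\otimes A_n)=(P\otimes A')\otimes A'_n$ explicitly, and uses completeness of $A'$ to identify the limit with $P\otimes A'=\dl u^*\rr\Gamma((X/\lS),E)$.
\end{itemize}
The cost of the replacement is a single additional isogeny $E_W\to E_W'$, absorbed into the function $N$. Once you insert this step at the start of (a), the rest of your argument aligns with the paper's.
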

Before proving this theorem, we consider two remarks.

\begin{rmk}\label{Wflatness after completion}
 If $M$ is a flat $W$-module, that is it is $p$-torsion free, then so is its $p$-adic completion. Indeed let $m_n\in M$ be a collection of elements such that $m_{n+1}-m_n=p^nx_n \in p^nM$ and $pm_n=p^ny_n \in p^nM$. Then $m_n=p^{n-1}y_n$ and
 \[
 p^ny_{n+1}-p^{n-1}y_n = p^n x_n \then y_n \in pM\then m_n\in p^n M.
 \]
\end{rmk}

\begin{rmk}\label{Replacing by ptorsion free}\cite[after Remark 2.5]{ES18}
 If $X$ is a smooth and quasi-compact $k$-scheme and $E_{\lW}\in \Crys(X/\lW)$, then there exists a $p$-torsion free $E_{\lW}'\in \Crys(X/\lW)$ and an isogeny $E_{\lW}\arr E_{\lW}'$.
 Indeed one can check locally, using Proposition \ref{Crystals abelian} and Theorem \ref{cry equivalent to quasi-nil}, that the sequence $E_{\lW}[p^n]$ stabilizes to a subobject $E_{\lW}[p^\infty]$ which is killed by a power of $p$. Thus $E_{\lW}'=E_{\lW}/(E_{\lW}[p^\infty])$ meet the requirements.
\end{rmk}

\begin{proof}[Proof of Theorem \ref{Toninibasechange Gamma}]

By Remark \ref{Replacing by ptorsion free} we can assume that $E$ is
$p$-torsion free in (a). If $E$ is a flat crystal, then $E$ is
$p$-torsion free by Lemma \ref{flat is torsion free}.

Now, for $n\in \N$, let $u_n = u\times \lW_n\colon \lS'_n\arr \lS_n$ and consider the base change map
\begin{equation}\label{partial basechange}
\dl u_n^* \rr\Gamma((X/\lS_n)_{\rm{crys}},E_n)\arr
\rr\Gamma((X'/\lS_n')_{\rm{crys}},h_{\rm{crys}}^*E_n).
\end{equation}
 Firstly we would like to prove that the $\rr\varprojlim$ of (\ref{partial basechange}) yields the map (\ref{basechange affine}).

If, for some $a\in \mathbb{N}$, $p^a=0$ in $\odi {\lS'}$, the map $u\colon \lS'\arr \lS$ factors through $u_n$ for $n\geq a$ and therefore
\[
\dl u^* \rr\Gamma((X/\lS)_{\rm{crys}},E) \simeq \dl u_n^* \rr\Gamma((X/\lS_n)_{\rm{crys}},E_n).
\]

So what remains is the case where $p$ is not nilpotent in
$\odi{\lS'}$ and $X/S$ is proper and $S=\Spec A/p$ (\emph{i.e.}
$I=(p)$). By Corollary \ref{RGamma Rlim} (c) we have that
$\rr\Gamma((X/\lS)_{\rm{crys}},E)$ is quasi-isomorphic to a
complex $P^\bullet$ of $A$-modules which is bounded above and it is made
by finite free $A$-modules. In this case, by Corollary \ref{RGamma Rlim} (b),
$\rr\Gamma((X/\lS_n)_{\rm{crys}},E_n)\simeq P_n^\bullet \coloneq P^\bullet\otimes_A A_n$ and
$$
\dl u_n^*P^\bullet_n \simeq P^\bullet_n \otimes_{A_n} A'_n.
$$
This is a complex of flasque projective systems in the sense of \cite[Remark B.1.4]{BO78}. In particular by \cite[Remark B.1.6]{BO78} we have
\[
\rr\varprojlim \dl u_n^*P^\bullet_n \simeq \varprojlim [(P^\bullet\otimes_A
A')\otimes_{A'} A'_n] \simeq P^\bullet\otimes_A A'.
\]
The last isomorphism holds because $P^\bullet\otimes A'$ is a
complex of finite free $A'$-modules which are therefore
complete. Since $P^\bullet\otimes_A A'\simeq \dl u^* \rr\Gamma((X/\lS)_{\rm{crys}},E)$ we get the result.
\begin{enumerate}[label=\textup{(}\alph*\textup{)}]
\item Applying Theorem  \ref{Toninibasechange} \ref{almostisoW} we
know that there exists $N\colon \Z\arr \N$, which depends only
on $E_{\lW}$ and $g_0$ (thus not on $n$), such that the $i$-th cohomology of (\ref{partial basechange}) is a $p^{N_i}$-isogeny.
Letting $n$ vary we can consider (\ref{partial basechange})
as a map of complexes in $D^-(\N,A'_*)$ whose $i$-th cohomology
is a $p^{N_i}$-isogeny. By Theorem \ref{Toninibasechange}
\ref{unifbounded} we can suppose $N_i=0$ for $i\gg 0$, and
by \cite[Remark B.1.6]{BO78} we have that $\rr^i\varprojlim = 0$
for $i\geq 2$. Now applying $\rr^i\varprojlim$ to \eqref{partial
basechange} we get our result by Lemma \ref{toninitoplemma}.
\item 
    We consider, as in 
    \ref{Toninibaschange Gammaa}, the map in (\ref{partial basechange}). 
Applying Theorem \ref{Toninibasechange} \ref{uflatEfree}  we get that the map
(\ref{partial basechange}) is a quasi-isomorphism. Again
applying $\rr^i\varprojlim$ to \eqref{partial basechange} yields
the quasi-isomorphism \eqref{basechange affine}.

\item The proof is exactly as in \ref{Toninibasechange quasi-iso Gamma E lf}, using Theorem \ref{Toninibasechange} \ref{EWnflatetc}.\qedhere
\end{enumerate}
\end{proof}
\begin{rmk}
 A result along the same lines is proven in \cite[Theorem 1.19]{Shi07I} and \cite[Proposition 5.3]{ES16}. 
\end{rmk}


\subsection{Pullback in the crystalline site
revisited}\label{pullback section}
Suppose that we are in \textbf{Siuation} \ref{base change convention}.
In what follows we collect some properties of pullback of
sheaves in the crystalline topoi, following the discussion in
\cite[Chapter III, Section 2.2, p. 196]{Ber74}. We denote by 
$$
h^{-1}_{\crys}\colon (X/\lS)_{\crys}^{\sim} \to (X'/\lS')_{\crys}^\sim
$$
the pullback  in the morphism of topoi
$h_{\crys}$ (not ringed topoi) induced by the morphism $h: X'\rightarrow X$. We instead denote by $h^*_{\crys}$ the pullback of $\odi
{X/\lS}$-modules.

\begin{defn}
     Given $\lT'\in (X'/\lS')_\crys$ and $\lT\in (X/\lS)_\crys$
     a $h$-PD-morphism $\lT'\to \lT$ is a PD-morphism $v\colon
     \lT'\to \lT$ which is compatible with $h$ and $u$. 
 
For $\lT'\in (X'/\lS')_\crys$ we define the category
\[
I^{\lT'}_h = \{ h\text{-PD-morphisms } \lT'\to \lT \text{ with }
\lT\in (X/\lS)_\crys \}.
\]
Given $x'\in \lT'$ we also define the category
\[
I^{x',\lT'}_h = \{ h\text{-PD-morphisms } \lV \to \lT \text{
with } \lT\in (X/\lS)_\crys \text{ and } x'\in \lV\subseteq \lT'
\text{ open} \}.
\]
\end{defn}

\begin{lem}\label{pullback of sheaves of sets}
 Let $F$ be a sheaf on $(X/\lS)_{\crys}$, $\lT'\in (X'/\lS')_\crys$ and $x'\in \lT'$. Then
 \begin{enumerate}
  \item $h^{-1}_{\crys}(F)$ is the sheafification of the
      presheaf
  $$
\lT'\mapsto \colim_{(q\colon \lT'\to \lT) \in I^{\lT'}_h}
F(\lT).
$$
\item for  $q\colon \lV \to \lT$ in $I^{x',\lT'}_h$ there is a canonical map
$$
q^{-1}(F_\lT) \to h^{-1}_\crys(F)_{\lV}.
$$
\item the set $I^{\lT',x'}_h$ is filtered; moreover taking stalks at $x'\in \lV\subseteq \lT$ of the maps in $(2)$ and passing to the limit we obtain an isomorphism
\[
\colim_{(q\colon \lV\to \lT) \in I^{x',\lT'}_h}
q^{-1}(F_\lT)_{x'} \to (h^{-1}_{\crys}(F)_{\lT'})_{x'}.
\]
 \end{enumerate}

\end{lem}
\begin{proof}
 Point $(1)$ is \cite[Chapter III, Section 2, eq
 (2.2.10)]{Ber74}, while $(2)$ is an easy consequence of
 $(1)$. The proof that $I^{x',\lT'}_h$ is filtered is given in
 the first paragraph of \cite[Chapter III, p. 199]{Ber74}.
 As in \cite[Chapter III, eq (2.2.11), p. 199]{Ber74}, taking a double limit in $(1)$ we have 
 \[
(h^{-1}_{\crys}(F)_{\lT'})_{x'} = \colim_{(q\colon \lV\to \lT)
\in I^{x',\lT'}_h} F(\lT).
\]
By definition of $I^{x',\lT'}_h$ it is easy to rewrite the above equation as
\[
(h^{-1}_{\crys}(F)_{\lT'})_{x'} = \colim_{(q\colon \lV\to \lT)
\in I^{x',\lT'}_h} (F_\lT)_{q(x')} = \colim_{(q\colon \lV\to
\lT) \in I^{x',\lT'}_h} q^{-1}(F_\lT)_{x'}.
\]
\end{proof}

\begin{lem}\label{pullback of sheaves of modules}
    Let $F$ be a sheaf of $\sO_{X/\lS}$-modules on $(X/\lS)_\crys$, $\lT'\in (X'/\lS')_\crys$ and $x'\in \lT'$. Then
 \begin{enumerate}
  \item for  $q\colon \lV \to \lT$ in $I^{x',\lT'}_h$ there is a canonical map
$$
q^*(F_\lT) \to h^*_\crys(F)_{\lV}.
$$
\item taking stalks at $x'\in \lV\subseteq \lT$ of the maps in $(2)$ and passing to the limit we obtain an isomorphism
\[
\colim_{(q\colon \lV\to \lT) \in I^{x',\lT'}_h} q^*(F_\lT)_{x'}
\to (h^*_{\crys}(F)_{\lT'})_{x'}.
\]
 \end{enumerate}
\end{lem}
\begin{proof}
 By definition we have
 \[
h^*_\crys(F)=h^{-1}_\crys(F)\otimes_{h^{-1}_\crys \odi{X/\lS}}
\odi{X'/\lS'}.
\]
Properties $(1)$ and $(2)$ follows from Lemma \ref{pullback of sheaves of sets}, taking into account that tensor products commute with filtered colimits.
\end{proof}

\begin{lem}\label{derived pullback of sheaves of modules}
 Let $F^\bullet$ be a complex of sheaves of $\sO_{X/\lS}$-modules on $(X/\lS)_\crys$, $\lT'\in (X'/\lS')_\crys$ and $x'\in \lT'$. Then
 \begin{enumerate}
  \item for  $q\colon \lV \to \lT$ in $I^{x',\lT'}_h$ there is a
      canonical map of complexes
  \[
      q^*(F_\lT^\bullet) \longrightarrow  h^*_\crys
      (F^\bullet)_{\lV};
 \]
 \item for all $j\geq 0$, taking $j$-th cohomology, stalks at $x'$ and passing to the limit we obtain an isomorphism 
 \[
 \colim_{(q\colon \lV\to \lT) \in I^{x',\lT'}_h} \Hl^j
 (q^*(F_\lT^\bullet))_{x'} \longrightarrow (\Hl^j (h^*_\crys
 F^\bullet)_{\lT'})_{x'};
 \]
 \item if $F^\bullet$ is bounded from above then we have a
     canonical isomorphism 
\[
 \colim_{(q\colon \lV\to \lT) \in I^{x',\lT'}_h} (\dl^j
 q^*(F_\lT^\bullet))_{x'} \longrightarrow ((\dl^j h^*_\crys
 F^\bullet)_{\lT'})_{x'}.
 \]
 \end{enumerate}
\end{lem}
\begin{proof} (1), (2) follows literally from Lemma \ref{pullback of
    sheaves of modules} $(1)$ and $(2)$ respectively. If
    $F^\bullet$ is bounded, then by \cite[p. 7.7-7.8]{BO78} we
    can replace $F^\bullet$ by a complex of flat
$\odi{X/\lS}$-modules. By \cite[Chapter III, Cor 3.5.2, p.
211]{Ber74} we have that $F_\lT$ is a complex of flat $\odi
\lT$-modules for any $\lT\in (X/\lS)_\crys$. We can therefore
replace $\dl^j$ in (3) by $\Hl^j$, but this is just (2). \end{proof}

\subsection{Crystalline base change}

\begin{defn}\label{higher direct image up to isogeny}
Let $f\colon X\rightarrow S$ be a morphism of $k$-schemes.
There is a morphism of ringed topoi \cite[\href{https://stacks.math.columbia.edu/tag/07IK}{Tag 07IK}]{stacks-project}
\[f_{\rm{crys}}\colon((X/\lW)^{\sim}_{\rm{crys}},
    \mathcal{O}_{X/\lW})\longrightarrow
    ((S/\lW)^{\sim}_{\rm{crys}}, \mathcal{O}_{S/\lW}).
\]
For a sheaf of $\sO_{X/\lW}$-modules $E$ on $(X/\lW)_\crys$  we consider the higher direct images
    $\rr^nf_{\rm{crys}*}E$ and also $K\otimes
    \rr^nf_{\rm{crys}*}E$, which belong to $K\otimes
    \Mod(\mathcal{O}_{S/\lW})$.
\end{defn}    
    
%

\begin{thm}\label{basechange for crystalpush}
 Consider a cartesian diagram
   \[
  \begin{tikzpicture}[xscale=1.5,yscale=-1.2]
    \node (A0_0) at (0, 0) {$X'$};
    \node (A0_1) at (1, 0) {$X$};
    \node (A1_0) at (0, 1) {$S'$};
    \node (A1_1) at (1, 1) {$S$};
    \path (A0_0) edge [->]node [auto] {$\scriptstyle{h}$} (A0_1);
    \path (A0_0) edge [->]node [left] {$\scriptstyle{f'}$} (A1_0);
    \path (A0_1) edge [->]node [auto] {$\scriptstyle{f}$} (A1_1);
    \path (A1_0) edge [->]node [auto] {$\scriptstyle{v}$} (A1_1);
  \end{tikzpicture}
  \]
of quasi-compact $k$-schemes with $f$ smooth and quasi-compact.
Let $E\in\Crys(X/\lW)$ and assume that $E$ is flat (resp. $S$ is smooth over $k$).
Then there is a natural map in $D((S'/\lW)_\crys)$
 \begin{equation}\label{basechange2}
          \dl v_{\crys}^*\rr f_{\crys_*}(E)\arr \rr f'_{\crys_*}h_{\crys}^*(E)
 \end{equation}
 which is an isomorphism (resp. induces isogeny on coholomogy).
\end{thm}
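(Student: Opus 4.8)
The plan is to construct \eqref{basechange2} by the adjunction procedure of Remark~\ref{adjgxsandgamma} and then to verify the two assertions by evaluating on objects of the crystalline site and reducing, level by level modulo $p^{n}$, to Theorem~\ref{Toninibasechange}. For the construction I would apply adjunction to $\dl h_{\crys}^{*}E\arr h_{\crys}^{*}E$ to get $E\arr \rr h_{\crys*}h_{\crys}^{*}E$, then apply $\rr f_{\crys*}$ and use $f_{\crys*}\circ h_{\crys*}=v_{\crys*}\circ f'_{\crys*}$ to obtain $\rr f_{\crys*}E\arr \rr v_{\crys*}\rr f'_{\crys*}h_{\crys}^{*}E$, and finally adjunction once more. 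The last step requires $\rr f_{\crys*}(E)$ to be bounded above: since $(X/\lW)_{\crys}=\varinjlim_{n}(X/\lW_{n})_{\crys}$, the sheaf $\rr^{q}f_{\crys*}(E)$ is governed at each level by $\rr^{q}(f_{n})_{\crys*}(E_{n})$ over $\lW_{n}$, whose vanishing range is uniform in $n$ and depends only on $X\arr S$ by \ref{unifbounded} of Theorem~\ref{Toninibasechange}; hence $\rr f_{\crys*}(E)$ is bounded and the construction makes sense.

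The assertion is Zariski-local on $S$ and on $S'$, so I would assume $S=\Spec A_{0}$ and $S'=\Spec A_{0}'$ affine, and, when $S$ is smooth over $k$, fix by \ref{affine lift} a $p$-adically complete flat $\lW$-lift $\sA$ of $A_{0}$ with $\sA/p^{n}$ smooth over $\lW_{n}$. Since the objects of $(S'/\lW)_{\crys}$ form a generating family, and each is killed by some $p^{n}$, it is enough to test \eqref{basechange2} on the PD-thickenings $\lT'$ of affine opens of $S'$. Under the standard identification of the localisation of the crystalline topos at $\lT'$ with $\lT'_{\Zar}$, of crystalline pushforward with the associated $g_{-/\lT'}$-pushforward, and of $\dl v^{*}_{\crys}$-then-restrict with pullback along a compatible PD-base $\lT$ for $S$ (as in \cite[Section 7]{BO78} and using the base-change identity $g_{X/\lS}\circ h_{\crys*}=u_{*}\circ g'_{X'/\lS'}$), the evaluated map becomes exactly the base-change map \eqref{basechange} of Theorem~\ref{Toninibasechange} over a base in which $p$ is nilpotent; the lift $\sA$ in the smooth case, or the smooth lift of $X$ constructed inside the proof of \ref{Toninibasechange}, supplies the needed de Rham models. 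Then \ref{uflatEfree} of Theorem~\ref{Toninibasechange} gives that the evaluated map is an isomorphism when $E$ is locally free, and \ref{almostisoW} gives, when $S$ is smooth, that each cohomology is a $p^{N_{i}}$-isogeny with $N_{i}$ depending only on $E$ — hence independent of $\lT'$ and of $n$ — thanks to Lemma~\ref{projective p inverted}. Finally I would reassemble the levels: by the $\rr\varprojlim_{n}$ argument of Corollary~\ref{RGamma Rlim} the global objects are the derived limits of the level-$n$ ones, and since $\rr^{i}\varprojlim=0$ for $i\ge 2$ by \cite[Remark B.1.6]{BO78}, Lemma~\ref{toninitoplemma} propagates the isomorphism (resp.\ the uniform isogeny bounds) through the limit, yielding the statement for \eqref{basechange2}.

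The main obstacle is not a single input but the bookkeeping tying together the three reductions — ``evaluate on a thickening'', ``restrict modulo $p^{n}$'', ``pass to $\rr\varprojlim_{n}$'' — i.e.\ checking that $\dl v_{\crys}^{*}$, $\rr f_{\crys*}$, $h_{\crys}^{*}$ and the various restriction and realisation functors commute in the precise way needed for the evaluated map to literally be the map of Theorem~\ref{Toninibasechange}. The flatness-free content is then concentrated in two places: the uniformity of the isogeny exponents, which ultimately rests on the projectivity of $\overline M[1/p]$ in Lemma~\ref{projective p inverted}, and the mild quasi-consistency in Corollary~\ref{RGamma Rlim} needed to control the derived inverse limit. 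Without a locally free lattice one only controls the base-change map up to a bounded power of $p$, and the substance of the proof is that this bound can be chosen independently of the level.
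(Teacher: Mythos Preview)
Your construction of the map and the reduction to evaluating on objects $(U',\lT',\delta')$ of $(S'/\lW)_{\crys}$ is essentially the paper's approach. Two points of comparison are worth making.

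First, you are vague where the paper is concrete: given $(U',\lT',\delta')$ you invoke ``a compatible PD-base $\lT$ for $S$'' without constructing it. The paper produces it explicitly: embed $S$ in an affine space $\A$ over $\lW$, lift $S'\arr S$ to a map $\lT'\arr\A$, take the PD-envelope of $S$ in $\A$ and truncate; this yields an $(S,\lT,\delta)\in(S/\lW)_{\crys}$ together with a PD-map $\lT'\arr\lT$ over $S'\arr S$. Then the identifications $(\rr f_{\crys*}E)_{\lT}\simeq \rr g_{X/\lT*}(E_{|(X/\lT)_{\crys}})$ and $[v_{\crys}^{*}(-)]_{\lT'}\simeq u^{*}\circ(-)_{\lT}$ show that $(\ref{basechange2})_{\lT'}$ is literally the map \eqref{basechange}, and Theorem~\ref{Toninibasechange}\ref{uflatEfree},\ref{almostisoW} finish the proof.

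Second, your final paragraph --- ``reassemble the levels'' via $\rr\varprojlim_{n}$ and Lemma~\ref{toninitoplemma} --- is superfluous and betrays a small confusion. Every object $\lT'$ of $(S'/\lW)_{\crys}$ already has $p$ nilpotent in $\odi{\lT'}$; there is no further limit over $n$ to take. A map in $D((S'/\lW)_{\crys})$ is a quasi-isomorphism iff its restriction to every such $\lT'$ is, and a map of cohomology sheaves is a $p^{N}$-isogeny iff this holds on every $\lT'$ with a uniform $N$, which is exactly the uniformity you correctly extracted from \ref{almostisoW}. So once you have applied Theorem~\ref{Toninibasechange} on each $\lT'$, the proof is over; the paper stops there and so should you. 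The $\rr\varprojlim$ machinery of Corollary~\ref{RGamma Rlim} and Theorem~\ref{Toninibasechange Gamma} is used elsewhere (for the affine-base comparison), not here.
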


\begin{proof}

The definition of the map in the statement is also given in the
proof of \cite[Chapter V, Theorem 3.5.1, p. 342]{Ber74}.
Applying adjunction to the canonical map $\dl h^*_{\rm{crys}}(E)\rightarrow h^{*}_{\rm{crys}}(E)$
we obtain a map
$$E\rightarrow \rr h_{\rm{crys}*}( h^*_{\rm{crys}}(E)).$$
Applying $\rr f_{\rm{crys}*}$ we get
$$\rr f_{\rm{crys}*}(E)\rightarrow \rr  v_{\rm{crys}*}\rr
f'_{\rm{crys}*}( h^*_{\rm{crys}}(E)).$$ 
The map (\ref{basechange2}) is obtained applying adjunction
again, which is possible because $\rr f_{\rm{crys}*}(E)$ is
bounded: if $(U, \lT, \delta)\in(S/W)_{\rm{crys}}$ then
(\cite[\href{https://stacks.math.columbia.edu/tag/07MJ}{Tag
07MJ}]{stacks-project}, \cite[Corollaire V, 3.2.3, p. 328]{Ber74})
\[
(\rr f_{\rm{crys}*}(E))_\lT \simeq \rr (f_{|f^{-1}(U)})_{f^{-1}(U)/\lT*}(E_{|(f^{-1}(U)/\lT)_{\rm{crys}}}),
\] 
which is bounded uniformly thanks to Theorem  \ref{Toninibasechange} \ref{unifbounded}.

The case when $E$ is flat is essentially contained in \cite[Chapter V, Theorem
3.5.1, p. 342]{Ber74}, but we include the proof for
completeness. 

Let's fix $(U',\lT',\delta)\in (S'/\lW)_\crys$ and $x'\in \lT'$.  
 It is enough to check that the map
 \begin{equation}\label{stalk of cohomology base change}
(\dl^j v^*_\crys(\rr f_{\crys*} (E))_{\lT'})_{x'}\to (\rr^j f'_{\crys*}(h^*_\crys(E))_{\lT'})_{x'}
\end{equation}
is a quasi-isomorphism (resp. $p^{N_j}$-isogeny for some $N_j$) for all $\lT'$ and $x'$. We follow notation from \S \ref{pullback section}, for instance recall that $I^{x',\lT'}_v$ is the filtered category of $v$-PD-morphisms $\lV\to \lT$ where $x\in \lV \subseteq \lT'$ is an open and $\lT\in (S/\lW)_\crys$.
 
Let $q\colon \lV\to \lT$ be an object of $I^{x',\lT'}_v$. By Lemma \ref{derived pullback of sheaves of modules} we have maps
   \[
  \begin{tikzpicture}[xscale=5,yscale=-1.2]
    \node (A0_0) at (0, 0) {$\dl q^*(\rr f_{\crys*} (E)_\lT)$};
    \node (A0_2) at (2, 0) {$\rr f'_{\crys*}(h^*_\crys(E))_\lV$.};
    \node (A1_1) at (1, 1) {$\dl v^*_\crys(\rr f_{\crys*} (E))_\lV$};
    \path (A0_0) edge [->]node [auto] {$\scriptstyle{c_q}$} (A0_2);
    \path (A1_1) edge [->]node [below] {$\scriptstyle{b_q}$} (A0_2);
    \path (A0_0) edge [->]node [below] {$\scriptstyle{a_q}$} (A1_1);
  \end{tikzpicture}
  \]
By \cite[\href{https://stacks.math.columbia.edu/tag/07MJ}{Tag
07MJ}]{stacks-project} the map $c_q$ is the map
considered in Theorem \ref{Toninibasechange} \ref{EWnflatetc}
(resp. \ref{almostisoW}).
Therefore, $c_q$ is a quasi-isomorphism (resp. we find $N\colon \Z\to \N$ depending only on $f$ and
$E$, such that the map $\Hl^j(c_q)$ is an $p^{N_j}$-isogeny).

Now, on the diagram above, we take $j$-th cohomology and the stalk
at $x'$. The map $b_q$ becomes the map \eqref{stalk of
cohomology base change}. This map and, in particular, its source and
target do not depend on $q\in I^{x',\lT'}_v$. Let's call it
$B\to C$. Passing to the colimit for $q\in I^{x',\lT'}_v$ (at
the level of complexes) we get the diagram of the form
   \[
  \begin{tikzpicture}[xscale=4,yscale=-1.2]
    \node (A0_0) at (0, 0) {$\colim_q A_q$};
    \node (A0_2) at (2, 0) {$C$};
    \node (A1_1) at (1, 1) {$B$};
    \path (A0_0) edge [->]node [below] {$\scriptstyle{\alpha}$} (A1_1);
    \path (A1_1) edge [->]node [below] {$\scriptstyle{\gamma}$} (A0_2);
    \path (A0_0) edge [->]node [above, text centered]
        {$\scriptstyle{\colim_q\beta_q}$} (A0_2);
  \end{tikzpicture}
  \]
with the map $\alpha$ an isomorphism by Lemma \ref{derived
pullback of sheaves of modules}. If $c_q$ is a
quasi-isomorphism, then so is $\colim_q\beta_q$, hence so is $\gamma$. This finishes the proof in the case when $E$ is flat.

Let's now focus on the ``resp.'' case. Taking the limit of the exact sequence
\[
0 \to K_q \to A_q \xrightarrow{\beta_q} C \to D_q \to 0
\]
we obtain that
\[
\Ker(\gamma)\simeq \colim_q K_q\text{ and } \Coker(\gamma)\simeq
\colim_q D_q.
\]
Because all the $\beta_q$ are $p^{N_j}$-isogenies, $p^{N_j}$ kills all $K_q$, $D_q$ and therefore $\Ker(\gamma)$ and $\Coker(\gamma)$, as required.
\end{proof}

\section{Higher Push-forward of Isocrystals}\label{section:pushforward}

This section is dedicated to the proof of Theorem \ref{main theorem}.

\begin{thm}\label{pushforwardisanisocrystal}
 Let $f\colon X\rightarrow S=\Spec A$ be a smooth and proper
 morphism between smooth $k$-schemes, and let $\mathcal{A}$ be a
 $p$-adically complete flat lift of $A$ over $W$ and $E\in
 \mathrm{Crys}(X/\lW)$ be a $p$-torsion free crystal.  Then for each
 $n\in\N$
 there is a crystal $E_{X/\sA}^n$ in $\mathrm{Crys}(S/\lW)$ with
 a morphism of sheaves  $\eta_n\colon E_{X/\sA}^n\to
 \rr^nf_{\crys_*}(E)$ on the crystalline site $(S/\lW)_\crys$
 which induces the isomorphism 
 \[
 \varprojlim_e (E_{X/\sA}^n)_{\Spec(\sA/p^e)} \simeq
 \varprojlim_e (\rr^nf_{\crys_*}(E))_{\Spec(\sA/p^e)}.
 \]
Moreover, 
$$
\eta_n\otimes K\colon E_{X/\sA}^n\otimes K\to \rr^nf_{\crys_*}(E)\otimes K
$$
is an isomorphism and $E_{X/\sA}^n$ corresponds,  via
Theorem \ref{cry equivalent to quasi-nil}, to the $\sA$-module
$$\Hl^n((X/\lS)_{\rm{crys}}, E_{|(X/\lS)_{\rm{crys}}})$$ equipped
with a topologically quasi-nilpotent connection.
\end{thm}

\begin{proof}[Proof of Theorem \ref{main theorem} as a consequence of Theorem \ref{pushforwardisanisocrystal}]
  By Remark \ref{Replacing by ptorsion free} we can assume $\mathcal{E}=E\otimes K$, where $E\in \Crys(X/\lW)$ is $p$-torsion free.
    By Theorem \ref{pushforwardisanisocrystal} the statement is true when $S$
    is affine. By descent for isocrystals (\cite[Lemma
    0.7.5]{Ogus90}), we can conclude that an $\sO_{X/\lW}$-module on $(X/\lW)_\crys$ in the isogeny category
    is an isocrystal if and only if it is Zariski locally so. This finishes the proof.
\end{proof}

%
%

\begin{proof}[Proof of Theorem \ref{pushforwardisanisocrystal}]

 Set $\sA_e=\sA/p^e$, $\lS_e=\Spec \sA_e $, 
\[
E_\lS = E_{|(X/\lS)_{\rm{crys}}} \text{ and } H_n = \Hl^n((X/\lS)_{\rm{crys}},
E_{|(X/\lS)_{\rm{crys}}}).
\]

We construct the crystal $E_{X/\sA}^n$ in $\mathrm{Crys}(S/W)$ with the morphism $\eta_n\colon E_{X/\sA}^n\to \rr^nf_{\crys_*}(E)$.
%
%
Let $\lD(e)$ be the $p$-adic completion of the PD-envelope of $S$ inside $\lS\times_{\lW}\lS \dots\times_{\lW}  \lS$ (the fiber product over $\lW$ of $e$ copies of $\lS$). 
Since $S$ is smooth, the projections 
$$p_i:\lD(e) \arr \lS$$ 
are flat (\cite[3.32]{BO78}, \cite[\href{https://stacks.math.columbia.edu/tag/0912}{Tag 0912}]{stacks-project}). 
By Theorem \ref{Toninibasechange Gamma} \ref{Toninibasechange quasi-iso Gamma altro} we get canonical isomorphisms
$$ 
p_i^*\rr\Gamma((X/\lS)_{\rm{crys}},E_\lS)\arr \rr\Gamma((X/\lD(e))_{\rm{crys}},E_{|(X/\lD(e))_{\rm{crys}}}).
$$
Taking cohomology we also get canonical isomorphisms
\[
p_i^* H_n \arr \Hl^n((X/\lD(e))_{\rm{crys}},E_{|(X/\lD(e))_{\rm{crys}}}).
\]
This defines an HPD-stratification on the $\sA$-module $H_n$, which is finitely
generated by Corollary \ref{RGamma Rlim}. Similarly to
\cite[6.6]{BO78},  this HPD-stratification defines a crystal $E^n_{X/\sA}\in
\Crys(S/\lW)$. Let's recall here its construction.

For each object $\chi=(U, \lT, \delta)\in(S/\lW)_{\rm{crys}}$
with $\lT$ affine we get, thanks to \cite[\href{https://stacks.math.columbia.edu/tag/07K4}{Tag 07K4}]{stacks-project} and the smoothness of $S$, a commutative diagram 
$$\xymatrix{U\ar@{^{(}->}[r]\ar[d]&\lT\ar@{.>}[d]^-{\alpha_\chi}\\
S\ar[r] &\lS.}$$
We set $(E^n_{X/\sA})_\lT$ to be the quasi-coherent sheaf on $\lT$ associated
to $\alpha_\chi^*H_n$. The structure of the HPD-stratification 
allows us to define the transition morphisms
and to prove the functoriality of the correspondence $\chi=(U, \lT, \delta)\mapsto (E^n_{X/\sA})_\lT$.

By Theorem \ref{Toninibasechange Gamma} \ref{Toninibaschange
Gammaa} with $S'=U$ and $\lS'=\lT$ there exists $N\colon \Z\arr
\N$, depending only on $E$ and $f$, such that the $i$-th cohomology of
\begin{equation}\label{equation one}
 \gamma_\chi \colon \dl \alpha_\chi^* \rr\Gamma((X/\lS)_{\rm{crys}},E_\lS)\arr \rr\Gamma((f^{-1}(U)/\lT)_{\rm{crys}},E_{|(f^{-1}(U)/\lT)_{\rm{crys}}})
\end{equation}
is a $p^{N_i}$-isogeny.

Notice that
(\cite[\href{https://stacks.math.columbia.edu/tag/07MJ}{Tag
07MJ}]{stacks-project}, \cite[Corollaire V.3.2.3, p.
318]{Ber74})
\[
(\rr f_{\rm{crys}*}(E))_\lT \simeq \rr f_{f^{-1}(U)/\lT*}(E_{|(f^{-1}(U)/\lT)_{\rm{crys}}})
\]
is quasi-isomorphic to the complex of $\sO_\lT$-modules
associated to any complex of $\Hl^0(\sO_\lT)$-modules representing the right hand side of (\ref{equation one}).

Moreover there is a canonical map
\[
\iota_\chi\colon \alpha_\chi^*H_n\arr \Hl^n(\dl \alpha_\chi^*
\rr\Gamma((X/\lS)_{\rm{crys}},E_\lS)).
\]
Putting everything together we get a canonical morphism
\[
(\eta_n)_\lT\colon (E^n_{X/\sA})_\lT  \arr
(\rr^n f_{\rm{crys}*}(E))_\lT.
\]
If $\chi=(S, \lS_e, \delta_e)$ and $\alpha_\chi\colon \lS_e\arr
\lS$ is the obvious closed immersion, then, by Theorem \ref{Toninibasechange
Gamma} \ref{Toninibasechange quasi-iso Gamma altro}, the map $\gamma_\chi$ is a quasi-isomorphism and $(\eta_n)_{\lS_e}$ becomes
the map of quasi-
coherent sheaves on $\lS_e$ associated to the map
\[
H_n \otimes \sA_e \arr
\Hl^n((X/\lS_e)_{\rm{crys}},E_{|(X/\lS_e)_{\rm{crys}}}).
\]
By Corollary \ref{RGamma Rlim} the projective limit of the above maps is an
isomorphism as required. The limit $H_n$, which corresponds to $E^n_{X/\sA}$
via Theorem 1.25, is therefore the module with the topologically quasi-nilpotent connection in the statement.

It remains to show that $\eta_n\otimes K$ is an isomorphism. It is enough to show that there exists a $\overline N\in \N$ such that for all $\chi=(U, \lT, \delta)\in(S/W)_{\rm{crys}}$ the map $(\eta_n)_\lT$ is a $ p^{\overline N}$-isogeny. Since $\gamma_\chi$ is a $p^{N_n}$-isogeny, we have to prove the analogous statement for $\iota_\chi$.

Set $M: =\rr\Gamma((X/\lS)_{\rm{crys}},E_\lS)$. By \cite[Proposition 5.7.6,
with the convention on Dual Definition 5.2.3]{Wei94} there is a convergent spectral sequence
\[
E_2^{uv}=\dl^u\alpha_\chi^*(\Hl^v(M)) \Rightarrow
\dl^{u+v}\alpha_\chi^*(M)=H^{u+v}.
\]
Since $M$ is bounded there exists $l\in \N$ such that $E_2^{uv}=0$ for $v<0$ or
$v>l$. Moreover $E_2^{uv}=0$ if $u>0$. By Lemma \ref{almost degenerate spectral sequence} we obtain a map
\[
E_2^{0n}=\alpha_\chi^*(\Hl^n(M))\arr \dl^n \alpha_\chi^*(M)=H^n
\]
which coincides with the map $\iota_\chi$. 

Since $\Hl^v(M)=H_v$ is endowed with a topologically quasi-nilpotent connection
on $\sA$, by Lemma \ref{projective p inverted} there exists
$N_v\in \N$, depending only on $H_v$, such that
$\dl^q\alpha_\chi^*(\Hl^v(M))$ is killed by $p^{N_v}$ for any $q\neq
0$. Since $\dl^q\alpha_\chi^*(\Hl^v(M))=0$ for all $v<0$ or $v>l$, we can choose $N$ large, so that it kills
$\dl^q\alpha_\chi^*(\Hl^v(M))$ for all $q\neq 0$ and $v$.
Thus Lemma \ref{almost degenerate spectral sequence} tells us that $\iota_\chi$ is a $p^{N(l+1)}$-isogeny.
\end{proof}
\begin{rmk}\label{Xu comparison}
 We want to compare \cite[Theorem 1.9]{Xu2019} and Theorem \ref{main theorem}
 and, in particular, show how they are compatible. Assume the common settings
 for those results, that is, let $f\colon X\to S$ be a smooth and proper morphism of smooth $k$-schemes and $\E\in I_{\rm{conv}}(X/\lW)$, where $I_{\rm{conv}}(X/\lW)$ denotes the category of convergent isocrystals.
 
 By \cite[Theorem 0.7.2]{Ogus84}, there is a fully faithful functor $\iota\colon I_{\rm{conv}}(X/\lW) \to I_{\crys}(X/\lW)$ and similarly for $S$.
 Moreover, $\rr^if_{\rm{conv}*}(\E)\in I_{\rm{conv}}(S/\lW)$  by \cite[Theorem 1.9]{Xu2019}  and $\rr^if_{\crys *}(\iota (\E))\in I_{\crys}(S/\lW)$ by Theorem \ref{main theorem}.
 We claim that there is a canonical isomorphism
 \[
 \iota(\rr^if_{\rm{conv}*}(\E)) \simeq \rr^if_{\crys *}(\iota
 (\E))\text{ in } I_{\crys}(S/\lW).
 \]
 
By descent for isocrystals (\cite[Lemma 0.7.5]{Ogus90}) we can assume
that $S$ is affine and, by \ref{Replacing by ptorsion free}, choose a
$p$-torsion free crystal $E$ such that $\iota(\E) \simeq E\otimes K$. We use
the notations from Theorem \ref{pushforwardisanisocrystal} and freely refer to its proof. In particular we consider the schemes $\lD(e)$ with projections $p_i\colon \lD(e)\to \lS$ and the module $H_n$ with stratification defined at the beginning of the proof.

Since all $\lD(e) \to \lW$ are flat, the associated formal schemes $\lP(e)$ belong to the convergent site of $S/\lW$.
We use the description of $\iota\colon I_{\rm{conv}}(S/\lW) \to I_{\crys}(S/\lW)$ given in \cite[Section 3.20]{Xu2019}. 
Applying \cite[Theorem 3.22]{Xu2019} (or \cite[Theorem 2.36]{Shi07I}) to $X/\lP(e)$ (be aware that the
$g_{X/\lP(e),\crys*}$ in the reference is what we denoted by $g_{X/\lD(e)*}$)  we see that $H_n\otimes K$ is the module with stratification inducing $\iota(\rr^if_{\rm{conv}*}(\E))$ (see also the proof of \cite[Lemma 4.10]{Xu2019}).
This shows the claim.
\end{rmk}

\begin{proof}[Proof of Theorem \ref{underived base change} as a consequence of
    Theorem \ref{pushforwardisanisocrystal}]
 By Theorem \ref{basechange for crystalpush} there is an isogeny
 \[
 \Hl^n( \dl v_{\crys}^*\rr f_{\crys_*}(E)) \arr \rr^n f'_{\crys_*}h_{\crys}^*(E).
 \]
 Set $M\coloneqq \rr f_{\crys_*}(E)$. There is a canonical map
 \[
 \phi\colon v_\crys^*\rr^n f_{\crys*}(E) = v_\crys^*(\Hl^n(M))\arr \Hl^n(\dl v_\crys^*M).
 \]
 We have to prove that it is a $p^{N_n}$-isogeny with an
 $N_n\in \N$ depending only on $E$ and $f$. We are going
 to show that there exists $N_n\in \N$, depending only on $E$
 and $f$, such that for all $\chi=(U', \lT',\delta')\in
 (S'/\lW)_\crys$ the $\sO_{\lT'}$-linear map 
 \[
 \phi_\chi\colon  v_\crys^*(\Hl^n(M))_{\lT'}\arr \Hl^n(\dl v_\crys^*M)_{\lT'}
 \]
  is a
 $p^{N_n}$-isogeny.  To show this it is enough to show that for each
 $x'\in \lT'$ the map on stalks
 $$ \phi_{\chi,x'}\colon (v_\crys^*(\Hl^n(M))_{\lT'})_{x'}\arr
 \Hl^n((\dl v_\crys^*M)_{\lT'})_{x'}
$$ is a $p^{N_n}$-isogeny. Now we use notation
from \S \ref{pullback section}. Recall that $I^{x',\lT'}_v$ is
the filtered category of $v$-PD-morphisms $\lV\xrightarrow{u} \lT$ where $x\in \lV \subseteq \lT'$ is an open and $\lT\in (S/\lW)_\crys$.
Then we have a commutative diagram 
$$\xymatrix{
   u^*(\Hl^n(M)_\lT) \ar[r]\ar[d]& \Hl^n(\dl
    u^* M_\lT)\ar[d]\\ v_\crys^*(\Hl^n(M))_{\lV}\ar[r]& \Hl^n(\dl
    v_\crys^*M)_{\lV}.
}$$
 If we take the stalk at $x'$ in the above diagram, then the
 bottom horizontal map is exactly
 $\phi_{\chi,x'}$. Moreover, if we take the colimit
 of the vertical arrows over all $u\in I^{x',\lT'}_v$,
 then the vertical arrows are isomorphisms by Lemma \ref{derived pullback of sheaves
 of modules}.  
 Thus it is enough for us to show that the top horizontal arrow
 is a $p^{N_n}$-isogeny with $N_n$ depending only on $E$
 and $f$.

Now consider $(U', \lT',\delta')\in (S'/\lW)_\crys$,  $(S, \lT,
\delta)\in (S/\lW)_\crys$  and a commutative  diagram
   \[
  \begin{tikzpicture}[xscale=3,yscale=-1.2]
      \node (A0_0) at (0, 0) {${f'}^{-1}(U')$};
    \node (A0_1) at (1, 0) {$X$};
    \node (A1_0) at (0, 1) {$U'$};
    \node (A1_1) at (1, 1) {$S$};
    \node (A2_0) at (0, 2) {$\lT'$};
    \node (A2_1) at (1, 2) {$\lT$};
    \path (A0_0) edge [->]node [auto] {$\scriptstyle{h}$} (A0_1);
    \path (A2_0) edge [->]node [auto] {$\scriptstyle{u}$} (A2_1);
    \path (A1_0) edge [->]node [auto] {$\scriptstyle{v}$} (A1_1);
    \path (A1_0) edge [->]node [auto] {$\scriptstyle{}$} (A2_0);
    \path (A1_1) edge [->]node [auto] {$\scriptstyle{}$} (A2_1);
    \path (A0_0) edge [->]node [left] {$\scriptstyle{f'}$} (A1_0);
    \path (A0_1) edge [->]node [auto] {$\scriptstyle{f}$} (A1_1);
  \end{tikzpicture}
  \] where $v, u$ form a PD-map. We have to show that
 the map
  \[
  u^*\Hl^n(M_\lT)\arr \Hl^n(\dl u^*(M_\lT))
  \] is a $p^{N_n}$-isogeny for some $N_n\in\N$ depending only on
  $E$ and $f$.
  Notice that by Theorem \ref{Toninibasechange} \ref{unifbounded} the
  complex $M_\lT\simeq \rr
  f_{X/\lT*}(E_{|(X/\lT)_\crys})$ is  bounded  with a
  bound depending only on $f$.
  By \cite[Proposition 5.7.6, with the convention on Dual Definition 5.2.3]{Wei94} there is a convergent spectral sequence
\[
E_2^{ab}=\dl^au^*(\Hl^b(M_\lT)) \Rightarrow \dl^{a+b}u^*(M_\lT)=H^{a+b}.
\]
The upper bound of $M_\lT$ provides a number $l\in \N$ depending only
on $f$ (so independent of the choice of $\lT$), such that $E_2^{ab}=0$ for $b<0$ or $b>l$. Moreover $E_2^{ab}=0$ if $a>0$. By Lemma \ref{almost degenerate spectral sequence} we obtain a map
\[
E_2^{0n}=u^*(\Hl^n(M_\lT))\arr \dl^n u^*(M_\lT)=H^n
\]
which coincides with the map $\phi_\chi$. 

By Lemma \ref{almost degenerate spectral sequence} we must show that
there exists $N_n$, which depends only on $E$ and $f$, such that $\dl^a u^*(\Hl^b(M_\lT))$ is killed by $p^{N_n}$ for $a\neq 0$. We can assume that $\lT$ and $S$ are affine.
By Remark \ref{Replacing by ptorsion free} and Theorem \ref{pushforwardisanisocrystal} there exists a crystal $H\in
\Crys(S/\lW)$ which is isogenous to $\Hl^b(M)$. Thus it
is enough to look at $\dl^a u^*H_\lT$. By Theorem \ref{cry equivalent to
quasi-nil} $H$ corresponds to some $(P, \nabla)\in
\QNCf(S/\lW)$. Let $S_\lW=\Spec \sA\to \lW$ be a lift 
of $S$ as in Lemma \ref{affine lift} (2), so that $P$ is an $\sA$-module. The
smoothness of $S_{\lW_n}$ over $\lW_n$ for all $n\in\N$ and \cite[\href{https://stacks.math.columbia.edu/tag/07K4}{Tag 07K4}]{stacks-project} imply the existence of a map  $\lT\to S_\lW$
lifting the identity map of $S$ along $S\subseteq S_\lW$. In particular $P
\otimes \odi \lT \simeq H_\lT$. Applying Lemma \ref{projective p
inverted} to $P$ and  $F=\dl^au^*(-\otimes \odi \lT)$ we find
the $N_n\in \N$ depending only on $E$ and $f$ such that $p^{N_n}$ kills $F(P)=\dl^au^*H_\lT$ for $a\neq 0$.
\end{proof}

\section{The Künneth Formula}\label{section:kunneth}

In this last section we prove Theorems \ref{Kunneth} and
\ref{Abelian}.

\begin{proof}[Proof of Theorem \ref{Kunneth}]
Consider the following diagram 
\[
\xymatrix{1\ar[r]&\pi_1^{\rm{crys}}(Y/\lW,y)\ar[r]\ar@{=}[d]&\pi_1^{\rm{crys}}(X\times_kY/\lW,(x,y))\ar[r]\ar[d]&\pi_1^{\rm{crys}}(X/\lW,x)\ar[r]\ar@{=}[d]&1\\
1\ar[r]&\pi_1^{\rm{crys}}(Y/\lW,y)\ar[r]&\pi_1^{\rm{crys}}(X/\lW,x)\times_k\pi_1^{\rm{crys}}(Y/\lW,y)\ar[r]&\pi_1^{\rm{crys}}(X/\lW,x)\ar[r]&1.}
\]
It is enough to show that the top sequence is exact.
Consider the diagram
\begin{equation}\label{kunneth diagram}
    \xymatrix{
        & Y\ar[r]^-x\ar[d]_-g&X\times_kY\ar[d]^{p_1}\\
                                                 &\Spec k\ar[r]^-u&X.}
    \end{equation}
 Since $x$ is a section of the projection, it gives a closed embedding on fundamental group schemes, while the projection yields a surjection on fundamental group schemes.
 We are going to apply \cite[Theorem A.1 (iii)]{EPS07} to prove
 the exactness in the middle. So we have to check:
 \begin{enumerate}[label=(\alph*)]
     \item If $\sE\in I_{\crys}(X\times_kY/\lW)$, then $x_\crys^*\sE$
         is a trivial object in $I_{\crys}(Y/\lW)$ if and only if there exists
         $\sF\in I_\crys(X/\lW)$ such that $p_{1\crys}^*\sF\simeq \sE$.
     \item We have to check that for any isocrystal $\sE\in I_{\rm{crys}}(X\times_kY/\lW)$, the maximal trivial 
subobject of $x_\crys^*\sE$ comes from a subobject $\sF\subseteq
\sE$, where $\sF$ is defined over $X/\lW$. 
    \item If $\stG\in I_\crys(Y/\lW)$, then there exists $\sE\in
        I_\crys(X\times_kY/\lW)$ such that $\stG$ is a
        subobject of $x_\crys^*\sE$. 
 \end{enumerate}
  Condition (c) follows because $x$ is a section of the
  projection $X\times_kY\xrightarrow{p_2}Y$. Also the "if" part
  of (a) is obvious from \eqref{kunneth diagram}, and the "only
  if" part is a consequence of (b). Thus let's focus on (b).

  Since $p_{1\crys_*}$ and $p_{1\crys}^*$ are a pair of adjoint
  functors between the category of sheaves of $\sO$-modules on
  $(X\times_kY/\lW)_\crys$ and that on $(X/\lW)_\crys$, and thanks to Theorem \ref{main theorem}, the
  induced pair of functors between the isogeny categories  $I_\crys(X\times Y/\lW)$ and
  $I_\crys(X/\lW)$ are also adjoint to each other.   The map
  $p_1$ induces a map on fundamental group schemes
  $$\pi_1^\crys(p_1)\colon
  \pi_1^{\rm{crys}}(X\times_kY/\lW,(x,y))\arr
  \pi_1^{\rm{crys}}(X/\lW,x)$$which is surjective because
  $p_{1}$ has a section. It follows that $p_{1\crys_*}$ on isocrystals corresponds to taking invariants by the kernel of $\pi_1^\crys(p_1)$. In particular the map
\[
\sF\coloneq p_{1\crys}^{*}p_{1\crys_*}\sE\arr \sE
\]
is injective. 

The same argument applied to $I_\crys(Y/\lW)$ and
  $I_\crys(\Spec k/\lW)$ shows that  \[
 g_{\crys}^{*}g_{\crys_*}x_\crys^*\sE\arr x_\crys^*\sE
\]
is injective and  $g_{\crys}^{*}g_{\crys_*}x_\crys^*\sE$ is
the maximal trivial subobject of $x_\crys^*\sE$.

Using the base change isomorphism in Theorem \ref{underived
base change} in \eqref{kunneth diagram}, we can conclude that
applying  $x_\crys^{*}$ to $\sF\arr\sE$ we get the map $g_{\crys}^{*}g_{\crys_*}x_\crys^*\sE\arr x_\crys^*\sE
$ as required.
\end{proof}

\begin{proof}[Proof of Theorem \ref{Abelian}] By the binary
    operation on $\pi_1^{\rm{crys}}(A/\lW, 0)$ induced by the
    addition of the abelian variety $A$,
    $\pi_1^{\rm{crys}}(A/\lW, 0)$ becomes group object in the
    category of affine group schemes over $K$. Then, by the
    calculation given in \cite[Theorem 5.4.2]{EckHil62},
    $\pi_1^{\rm{crys}}(A/\lW, 0)$ is an abelian group scheme.
    \end{proof}

\printbibliography

\end{document}